\newcommand*{\@old@slash}{}\let\@old@slash\slash
\def\slash{\relax\ifmmode\delimiter"502F30E\mathopen{}\else\@old@slash\fi}
\newcommand{\g}{\mathfrak{g}}
\newcommand{\uq}{U_q(\mathfrak{sl}_r)}
\newcommand{\wt}{\widetilde}
\newcommand{\weight}{\mathsf{wt}}
\newcommand{\C}{\mathbb{C}}
\renewcommand{\L}{\mathcal{L}}
\renewcommand{\o}{\circ}
\renewcommand{\b}{\mathfrak{b}}
\newcommand{\h}{\mathfrak{h}}
\newcommand{\W}{\mathcal{W}}
\renewcommand{\deg}{\text{deg}}
\DeclareMathOperator{\SL}{SL}
\numberwithin{equation}{section}
\newcommand{\swap}{\mathsf{swap}}
\newcommand{\flip}{\mathsf{flip}}
\newcommand{\trip}{\mathsf{trip}}
\newcommand{\sep}{\mathsf{sep}}
\newcommand{\osc}{\mathsf{osc}}
\newcommand{\type}{\mathsf{type}}
\newcommand{\inv}{\mathsf{Inv}}
\renewcommand{\hom}{\mathsf{Hom}}
\newcommand{\CRG}{\mathsf{CRG}}
\newcommand{\CG}{\mathsf{CG}}
\newcommand{\TCRG}{\mathsf{TCRG}}
\newcommand{\SSV}{\mathsf{SSV}}
\newcommand{\WSSV}{\mathsf{WSSV}}
\newcommand{\BL}{\mathsf{BL}}
\newcommand{\RT}{\mathsf{RT}}
\newtheorem{thm}{Theorem}[section]
\newtheorem*{thm*}{Theorem}
\newtheorem{corollary}[thm]{Corollary}
\newtheorem{lemma}[thm]{Lemma}
\newtheorem{proposition}[thm]{Proposition}
\newtheorem{example}[thm]{Example}
\newtheorem{algorithm}[thm]{Algorithm}
\theoremstyle{definition}
\newtheorem{definition}[thm]{Definition}
\theoremstyle{remark}
\newtheorem{remark}[thm]{Remark}
\crefname{lemma}{lemma}{lemmas}
\Crefname{Lemma}{Lemma}{Lemmas}
\title{Clasped web bases from hourglass plabic graphs}
\author[Enugandla]{Pranav Enugandla}
\address[Enugandla]{University of California, Berkeley, CA, United States of America}
\email{shreepranav\_varma\@berkeley.edu}
\author[Gaetz]{Christian Gaetz}
\thanks{The authors were supported by the National Science Foundation under award no. DMS-2452032 and by a travel grant from the Simons Foundation.}
\address[Gaetz]{University of California, Berkeley, CA, United States of America}
\email{gaetz\@berkeley.edu}
\date{\today}
\begin{document}

\begin{abstract}
G.--Pechenik--Pfannerer--Striker--Swanson \cite{GPPSS-sl4} applied \emph{hourglass plabic graphs} to construct \emph{web bases} for spaces of tensor invariants of fundamental representations of $U_q(\mathfrak{sl}_4)$, extending Kuperberg's celebrated basis for $U_q(\mathfrak{sl}_3)$ \cite{Kuperberg}. We give several combinatorial characterizations of basis webs in the kernel of the projection to invariants in a tensor product of arbitrary (type $1$) irreducibles. We apply this to show that the nonzero images of basis webs form a basis (a property shared with Lusztig's dual canonical basis) yielding distinguished \emph{clasped} web bases for each such tensor product.
\end{abstract}

\maketitle
\section{Introduction}

\subsection{Webs and web bases}

Consider a tensor product $V_q^{\underline{a}} = \bigotimes_{i=1}^n V_q(\omega_{a_i})$ of irreducible complex representations of $\uq$ indexed by fundamental weights $\omega_1,\ldots, \omega_{r-1}$. \emph{Webs}, introduced by Kuperberg \cite{Kuperberg} as a tool for the computation of quantum link invariants, give a diagrammatic calculus for $\hom_{U_q(\mathfrak{sl}_r)}(V_q^{\underline{a}'},V_q^{\underline{a}''})$ (and its quantum group deformation). By dualizing and moving tensor factors, we may equally well choose to study the space of tensor invariants:
\[
\inv(V_q^{\underline{a}}) \coloneqq \hom_{\uq}(V_q^{\underline{a}},\mathbb{C}(q)).
\]

Webs $W$ representing elements $[W]_q \in \inv(V_q^{\underline{a}})$ are planar bipartite graphs, whose edges are colored by fundamental weights, and which are embedded in a disk whose boundary vertices $b_1,\ldots,b_n$ are each incident to a single edge, colored $\omega_{a_i}$. For this reason, we call $\underline{a}$ the \emph{boundary conditions} of the web. We follow the conventions (of e.g. \cite{Sikora} and \cite{Fraser-Lam-Le}) that the sum of the indices of the fundamental weights coloring the edges incident to each internal vertex is $r$.

There are in general many relations (see \cite{Cautis-Kamnitzer-Morrison}) between the invariants $[W]_q$ of webs with boundary conditions $\underline{a}$. A \emph{web basis} is a subset of the web invariants forming a basis. Kuperberg gave a $U_q(\mathfrak{sl}_3)$ web basis consisting of the \emph{non-elliptic} webs. Beyond early applications for quantum link invariants \cite{Khovanov}, the non-elliptic basis has also found application to skein modules \cite{Le.Sikora}, dimer models \cite{Douglas-Kenyon-Shi}, and dynamical algebraic combinatorics \cite{Petersen-Pylyavskyy-Rhoades}, among other areas. 

Since Kuperberg's work, much effort has gone into constructing higher-rank web bases, with additional special properties. In \cite{GPPSS-sl4}, the first \emph{rotation-invariant} $U_q(\mathfrak{sl}_4)$ web basis was constructed using \emph{hourglass plabic graphs}. This is the basis of \emph{(top) fully reduced} web invariants $\W_{\underline a}$. Fully reduced hourglass plabic graphs also recover Kuperberg's basis for $U_q(\mathfrak{sl}_3)$.

An obvious limitation of the fully reduced web bases is that they are heretofore only known for tensor products of irreducibles indexed by fundamental weights. To extend this basis to arbitrary tensor products of irreducibles, we need to understanding the \emph{clasping} of these webs. Kuperberg gave clasping rules for $U_q(\mathfrak{sl}_3)$, and these clasped webs have appeared, for example, in well-known conjectures of Fomin--Pylyavskyy \cite{Fomin-Pylyavskyy-advances} relating basis webs to cluster algebras.

\subsection{Clasped webs}
\emph{Clasped webs} provide for an extension of the diagrammatic calculus to spaces of morphisms between tensor products of general finite-dimensional (type-$1$) irreducible representations $V_q(\lambda)$.

Fix a partition of $[n]= I_1\sqcup \cdots \sqcup I_m$, with each $I_i$ an interval. This defines the \emph{clasp sequence} $\underline C = (\underline c_1,\dots, \underline c_m)$ where $\underline c_i = (a_j)_{j\in I_i} \in [r-1]^{|I_i|}$ is the $i$-th \emph{clasp}. Oftentimes we will refer to the corresponding set of boundary vertices $\{b_j\ |\ j\in I_i\}$ also as the $i$-th clasp, with $\underline c_i$ recording the tuple of boundary conditions of the vertices in the clasp. The weight of the clasp $\underline c_i$ is defined to be $\weight(\underline c_i) = \sum_{j\in I_i}\omega_{a_j}\in \Lambda^+,$ where $\Lambda^+$ is the set of dominant integral weights for $G=\SL_r(\mathbb{C})$.

Fix a clasp sequence $\underline C = (\underline c_1,\dots, \underline c_m)$ for boundary conditions $\underline a$, and let $\lambda_i = \weight(\underline c_i)$. Recall that for each $i$ there is an inclusion (unique up to scaling) of $\uq$-representations $V_q(\lambda_i)\xhookrightarrow{} V_q^{\underline c_i}.$ This induces an inclusion of $\uq$-representations
\[
\bigotimes_{i=1}^m V_q(\lambda_i) \xhookrightarrow{} \bigotimes_{i=1}^m V_q^{\underline c_i} = V_q^{\underline a}.
\]
Therefore, we obtain a surjective map of invariant spaces
\[\pi_{\underline C}:\inv(V_q^{\underline a}) \to \inv\left(\bigotimes_{i=1}^m V_q(\lambda_i)\right).\]
The image $\pi_{\underline{C}}([W]_q)$ of a web invariant $[W]_q \in \inv(V_q^{\underline{a}})$ is a \emph{clasped web invariant}.

\subsection{Webs as hourglass plabic graphs}

\emph{Hourglass plabic graphs} are a combinatorial manifestation of webs, introduced by G.--Pechenik--Pfannerer--Striker--Swanson \cite{two-column, GPPSS-sl4}; see \Cref{sec:hpg-background}. In these, edges colored by $\omega_{a}$ are drawn as twisted ``hourglass" edges of $a$ strands. The fundamental combinatorial data associated to an hourglass plabic graph are the \emph{trips}: certain walks along its edges. These trips can be used to characterize the graphs appearing in the fully reduced web basis. As we see in our main result\footnote{The equivalence of (1) and (2) for $r=2,3$ was shown by Kuperberg \cite{Kuperberg}. The equivalence of these with (3) for $r=2,3$ will appear in independent forthcoming work of Catania, Kim, and Pfannerer \cite{catania-kim-pfannerer}.} below, they remarkably also exactly characterize the kernel of the clasping map $\pi_{\underline C}$.

\begin{thm}
    \label{thm:main}
    Let $r \in \{2,3,4\}$, let $\W_{\underline a}$ be the fully reduced web basis for $\inv(V_q^{\underline a})$, and let $\underline{C}$ be a clasp sequence for $\underline{a}$ with weights $\lambda_1,\ldots,\lambda_m$. Then the following are equivalent for $[W]_q \in \W_{\underline a}$:
    \begin{enumerate}
        \item $[W]_q\notin \ker\pi_{\underline C}$,
        \item $W$ is non-convex (see \Cref{def:nonconvex}),
        \item $W$ has no trips that start and end in the same clasp.
    \end{enumerate}
    Moreover, the clasped web invariants for these webs $W$ form a basis for $\inv\left(\bigotimes_{i=1}^m V_q(\lambda_i)\right)$. 
\end{thm}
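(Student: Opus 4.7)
The strategy is to first establish the combinatorial equivalence (2) $\iff$ (3), then to prove $\neg(3) \implies \neg(1)$ by a local skein analysis, and finally to obtain the reverse kernel implication together with the basis statement via a dimension count.

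I would begin with (2) $\iff$ (3), which is a purely combinatorial statement about hourglass plabic graphs. Non-convexity is phrased in terms of the planar embedding relative to the clasp partition, whereas (3) restricts the endpoints of trips. A trip starting and ending in the same clasp $\underline c_i$ cuts off a sub-disk whose only boundary contact with the ambient web lies in $\underline c_i$, which is essentially the defining feature of a convex subgraph; conversely, by a Jordan-curve style argument using the structure of trips in an hourglass plabic graph, the boundary of any convex region must be traced by trip segments whose endpoints all lie in a single clasp.

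Next, for $\neg(3) \implies \neg(1)$, I would exploit that the clasp projector is precisely the projection $V_q^{\underline c_i} \twoheadrightarrow V_q(\lambda_i)$ onto the highest-weight component. If $W$ has a trip starting and ending in $\underline c_i$, then the ``pocket'' it cuts off is a sub-web all of whose boundary lies in $\underline c_i$, contributing to an isotypic summand $V_q(\mu)$ of the restricted tensor product with $\mu \not= \lambda_i$ strictly smaller in dominance order; such a summand is annihilated by the clasp projector. Making this rigorous requires isolating the pocket via the skein moves available in the fully reduced basis, and checking that the extracted invariant genuinely lies in a proper $V_q(\mu)$-summand; I expect this to be the main technical obstacle, particularly in the $\sl_4$ case, where the braiding behavior of fully reduced webs is more subtle than for $\sl_3$.

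Finally, for the reverse implication $\neg(3) \impliedby \neg(1)$ together with the basis statement, I would use a dimension count. Since $\pi_{\underline C}$ is surjective and $\W_{\underline a}$ is a basis of $\inv(V_q^{\underline a})$, the images of $\W_{\underline a}$ span $\inv\bigl(\bigotimes_{i=1}^m V_q(\lambda_i)\bigr)$. By the previous step the webs violating (3) contribute zero, so the spanning set is already indexed by webs satisfying (3). If I can show their cardinality equals $\dim \inv\bigl(\bigotimes_{i=1}^m V_q(\lambda_i)\bigr)$, the images must be linearly independent, yielding both the basis claim and the remaining implication (3) $\implies$ (1) at once. For the enumeration, I would refine the bijection between $\W_{\underline a}$ and the relevant set of tableaux from \cite{GPPSS-sl4} so that the condition of having no same-clasp trip corresponds exactly to a Littlewood--Richardson-type condition counting $\dim \inv\bigl(\bigotimes_{i=1}^m V_q(\lambda_i)\bigr)$; once the correct statistic is identified on tableaux, the count should follow from standard branching arguments.
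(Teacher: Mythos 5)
Your high-level architecture --- show the ``bad'' webs die under $\pi_{\underline C}$, then count the survivors against $\dim\inv\bigl(\bigotimes_i V_q(\lambda_i)\bigr)$ to get both the basis claim and the remaining implication for free --- is exactly the skeleton of the paper's argument. However, there are two genuine gaps in how you propose to execute it.

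First, your direct combinatorial proof of (2) $\iff$ (3) does not work as sketched, and the paper never attempts such a proof. A trip strand is a walk along edges of $W$, not a cut path: the ``pocket'' it bounds has many web edges crossing its boundary, so a returning trip does not by itself exhibit a cut path of deficient weight, and it is not ``essentially the defining feature'' of partial convexity. The paper's route is entirely different: it inserts two intermediate conditions --- absence of bad local boundary configurations within a clasp, and absence of descents in the lattice word $\L(W)=\partial\sep_W$ --- and proves a one-way cycle (partially convex $\Leftarrow$ kernel) $\Rightarrow$ (no bad configs fails) $\Rightarrow$ (descent exists), closing the loop only via the dimension count $|\BL(\underline C)|=|\RT(\weight(\underline C))|=\dim\inv$. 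The trip condition (3) is then attached to this cycle by checking that every bad configuration carries a returning trip and by reading off, from the explicit form $1\cdots1\{1,2\}\cdots\{1,2\}\overline4\cdots\overline4$ of a descent-free lattice word on a clasp, that the separation labels forbid a returning trip. Neither direction of (2) $\iff$ (3) is proved geometrically; your Jordan-curve sketch is the hard missing content. Relatedly, your $\neg(3)\Rightarrow\neg(1)$ step via skein-isolating the pocket is exactly the technical obstacle you flag; the paper avoids it by proving instead $\neg(2)\Rightarrow\neg(1)$, which \emph{is} easy (cut along the deficient path and observe $V_q(\lambda_i)$ does not occur in $V_\gamma$).

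Second, your counting step silently assumes the clasps are \emph{sorted}. The descent condition on $\L(W)$ (and hence the bijection with Littlewood--Richardson tableaux) is only defined for weakly increasing clasps, and the bad-configuration analysis is likewise boundary-order dependent. For a general clasp sequence the paper must build the $\swap$ map --- a bijection $\CRG(\underline a)/{\sim}\to\CRG(\underline a')/{\sim}$ permuting adjacent boundary conditions that provably preserves both non-convexity and returning trips --- and this occupies the longest section of the paper, requiring careful control of move-equivalence classes (ladders, oriented triangles, reduced sequences of ASM/Yang--Baxter moves). Your proposal gives no mechanism for reducing an arbitrary clasp sequence to one where the tableau count applies, so the enumeration step is incomplete as stated.
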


\begin{remark}
    The fact that the web invariants not killed by the projection $\pi_{\underline C}$ form a basis in $\inv\left(\bigotimes_{i=1}^m V_q(\lambda_i)\right)$ is notable. This property is one of several special properties (also including rotation-invariance) that the fully reduced web bases share with Lusztig's dual canonical basis \cite{Lusztig:canonical}. It was initially hoped that Kuperberg's basis agreed with the dual canonical basis until this was disproven by Khovanov--Kuperberg \cite{Khovanov-Kuperberg}, however the two bases do seem to share many distinguished properties. 
\end{remark}

In the case when $\underline C$ is \emph{sorted}, i.e., each $\underline c_i$ is a weakly increasing tuple, the following theorem gives us more criteria to easily check for non-convexity.

\begin{thm}
    \label{thm:intro-sorted}
    In the setting of \Cref{thm:main}, assume further that $\underline C$ is sorted. Then the following additional conditions are equivalent to \Cref{thm:main}(1)-(3):
    \begin{enumerate}
        \item[(4)] No clasp contains a ``bad" local configuration (see \Cref{fig:bad_configs,fig:badconf_3,fig:u_turn}).
        \item[(5)] The lattice word $\L(W) = \partial\mathrm{sep}(W)$ has no $\underline C$-descents (see \Cref{def:latticeword}).
    \end{enumerate}
\end{thm}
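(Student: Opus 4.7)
The plan is to reduce \Cref{thm:intro-sorted} to \Cref{thm:main} by establishing the two additional equivalences (3)$\Leftrightarrow$(4) and (4)$\Leftrightarrow$(5) under the sortedness hypothesis on $\underline C$; combined with (1)$\Leftrightarrow$(2)$\Leftrightarrow$(3) from \Cref{thm:main}, this gives the full circle of equivalences.

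For (3)$\Leftrightarrow$(4), I would carry out a local analysis at each clasp. For $\neg(4)\Rightarrow\neg(3)$, the strategy is to inspect each pattern in \Cref{fig:bad_configs,fig:badconf_3,fig:u_turn} and verify, by tracing through the trip rules for hourglass plabic graphs, that the configuration directly exhibits a trip that starts and ends in the same clasp. The reverse direction is harder: given a trip $t$ of $W$ that begins and ends in some clasp $\underline c_i$, one examines the minimal initial and terminal segments of $t$ to isolate where $t$ ``turns around''. Because $\underline c_i$ is sorted, the combinatorial options for the colored (hourglass) edges incident to the clasp are restricted; combined with full reducedness of $W$, a case analysis should force the turning-around locus to realize one of the enumerated bad configurations. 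For $r=2,3$ the analysis largely reduces to Kuperberg-style planar considerations, while for $r=4$ it leverages the refined trip rules of \cite{GPPSS-sl4}.

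For (4)$\Leftrightarrow$(5), the lattice word $\L(W)=\partial\sep(W)$ reads off boundary separation data position by position, and a $\underline C$-descent is by definition a local violation of the lattice/sorted pattern at two consecutive positions within a single clasp. Under sortedness, the separation values expected at a well-behaved clasp follow a predictable weakly increasing pattern, so a $\underline C$-descent corresponds exactly to a local failure of this pattern. I would match each bad configuration against the separation values it imposes on its clasp boundary, showing that bad configurations force $\underline C$-descents. Conversely, a descent localizes the problem to a small portion of the boundary which, by full reducedness of $W$, must be realized by one of the listed bad patterns.

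The main obstacle is the exhaustiveness of (3)$\Rightarrow$(4) at $r=4$, where the richer trip rules and the variety of hourglass edge types multiply the cases considerably. The real content is in verifying that \Cref{fig:bad_configs,fig:badconf_3,fig:u_turn} enumerate \emph{all} minimal local patterns through which a trip can return to its starting clasp. This verification is driven by the allowed color sequences $(a_j)_{j\in I_i}$, the admissible edge types meeting a clasp, and the constraints imposed by full reducedness (which excludes various small reducible subpatterns). Once this exhaustiveness is established, the matching of bad configurations to $\underline C$-descents in Step 2 is a direct computation, and \Cref{thm:main} supplies the remaining links to (1) and (2).
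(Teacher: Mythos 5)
Your proposal diverges from the paper's argument in a way that leaves a genuine gap: it has no mechanism for closing the cycle of implications. The paper does not prove the links among (3), (4), (5) by direct combinatorial translation in both directions. Instead it establishes a one-way chain (not in kernel) $\Rightarrow$ (non-convex) $\Rightarrow$ (no bad configurations) $\Rightarrow$ (no $\underline C$-descents), and then closes the loop by a \emph{dimension count}: balanced lattice words with no $\underline C$-descents are put in bijection with rectangular Littlewood--Richardson tableaux (\Cref{prop:LR_bijection}), whence $|\BL(\underline C)|=\dim\inv_G\left(\bigotimes_{i=1}^m V(\lambda_i)\right)$ (\Cref{cor:LR}); since the surviving webs already span, the inequality $|\W_{\underline C}|\le|\BL(\underline C)|$ forces equality and yields all the reverse implications at once, together with the basis statement. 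Your plan omits this counting ingredient entirely and instead leans on \Cref{thm:main} for (1)--(3); but in the paper \Cref{thm:main} is itself deduced from the sorted case via the swap map, so that reliance is circular, and even granting \Cref{thm:main} you would still need both hard directions of your two proposed equivalences to be provable head-on.

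Those hard directions are exactly where your sketch would fail. First, ``returning trip $\Rightarrow$ bad local configuration'' is not a local statement: the bad configurations of \Cref{fig:bad_configs} all sit between \emph{adjacent} boundary vertices, whereas a trip can start and end at far-apart vertices of the clasp and wander arbitrarily deep into the graph, so there is no ``turning-around locus'' to case-analyze. The paper never proves this implication locally; it proves (non-convex) $\Rightarrow$ (no returning trips) globally, by first using the already-closed equivalence loop to force the clasp's lattice word to have the form $1\cdots1\{1,2\}\cdots\{1,2\}\overline4\cdots\overline4$ and then deriving a contradiction from the definition of the separation labeling $\sep_W$. Second, the implication ``descent $\Rightarrow$ bad configuration'' (\Cref{prop:descent}) is a substantial argument through the growth algorithm of \cite{GPPSS-sl4}, including witness-chasing for the $\{1,2\}\{1,3\}$ and $\{2,4\}\{3,4\}$ cases; it is not the short ``direct computation'' with separation values that you describe. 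To repair the proof, adopt the paper's architecture: one-way implications plus the Littlewood--Richardson enumeration to close the cycle.
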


The map $\partial\mathrm{sep}$ appearing in \Cref{thm:intro-sorted}(5) is the bijection from \cite{GPPSS-sl4} between elements of $\W_{\underline a}$ and (lattice words of) the associated \emph{fluctuating tableaux} \cite{fluctuating-paper}, certain generalizations of standard Young tableaux (which correspond to the case $\underline{a}=(1,1,\ldots,1)$).

A key tool in the proof of \Cref{thm:main} is the \emph{swap} map of \Cref{sec:sorting}, which is of interest even outside the context of clasping. It gives bijections between the fully reduced web bases of \cite{GPPSS-sl4} for any ordering of the same tensor factors.

\begin{thm}
\label{thm:intro-swap}
    The map $\swap$ gives bijections between the fully reduced web bases $\W_{\underline a}$ for all permutations of the boundary conditions $\underline{a}$.
\end{thm}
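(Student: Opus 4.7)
The plan is to reduce to adjacent transpositions and then verify the essential properties of $\swap$ locally at a pair of adjacent boundary vertices. Since the symmetric group $S_n$ on the boundary is generated by adjacent transpositions, it suffices to show that for each $i\in[n-1]$ the $\swap$ map gives a bijection
\[
\swap_i\colon \W_{(a_1,\ldots,a_i,a_{i+1},\ldots,a_n)} \longrightarrow \W_{(a_1,\ldots,a_{i+1},a_i,\ldots,a_n)},
\]
and then to check that the composites define a consistent action on $S_n$.

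For the bijection itself, two things must be verified. First, for $W\in\W_{\underline a}$, the graph $\swap_i(W)$ must be fully reduced with the swapped boundary conditions; here I would rely on the trip-based characterization of $\W_{\underline a}$ from \cite{GPPSS-sl4} that also underlies \Cref{thm:main}. Because $\swap_i$ is expected to act only by a local modification in a small disk surrounding $b_i$ and $b_{i+1}$, the trip decomposition changes only locally, and one checks that (a) no forbidden local configuration (short cycles, bad polygons, or excluded trip patterns) is introduced, and (b) the pair of trips emanating from the two affected boundary vertices is rerouted in such a way that their endpoint data matches $\underline a'$. Second, $\swap_i$ must be invertible: applying $\swap_i$ to the modified graph (at the same two positions) should return the original graph, which should be immediate from the local and reversible character of the move.

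The main obstacle is the first item — verifying that no forbidden subconfiguration arises. The argument will come down to a finite case analysis of the local structure at $b_i$ and $b_{i+1}$ inside a fully reduced graph (for example, whether each boundary edge is a single edge or an hourglass, and how the trips through them interact), together with an explicit description of $\swap_i$ on each case. Once the adjacent-transposition case is settled, well-definedness of $\swap$ on all of $S_n$ reduces to the Coxeter relations: commutation of $\swap_i$ and $\swap_j$ for $|i-j|\geq 2$ is immediate by locality, while the braid relation $\swap_i\swap_{i+1}\swap_i=\swap_{i+1}\swap_i\swap_{i+1}$ reduces to an explicit local check near three consecutive boundary vertices, again controlled by trip invariants.
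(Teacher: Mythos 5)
Your reduction to adjacent transpositions matches the paper's setup, but the core of your plan --- that $\swap_i$ is ``a local modification in a small disk surrounding $b_i$ and $b_{i+1}$'' whose correctness follows from ``a finite case analysis of the local structure'' at those two vertices --- is precisely the na\"ive approach that fails. The paper exhibits an explicit example (\Cref{fig:pathological_swap}) where applying the obvious local change to a fully reduced graph produces a graph that is \emph{not} fully reduced: the defect is not visible near the boundary, because moves can propagate it arbitrarily far into the interior. Full reducedness is a property of the entire move-equivalence class, so no analysis confined to a neighborhood of $b_i,b_{i+1}$ can certify it. The actual argument requires a \emph{global} trichotomy on the $\trip_2$-strands $\ell_1,\ell_2$ through the two vertices (equal, crossing, or disjoint), and in the disjoint case a further dichotomy between oriented and non-oriented ``ladders''; one first applies Yang--Baxter and ASM moves to bring the configuration into one of four ``special'' forms (\Cref{lem:singlestrand,lem:crossing,lem:ladder,lem:ladder_nonoriented}) and only then performs the local surgery $\wt\swap$, whose output is checked to satisfy the well-orientedness criteria (P1)--(P2) globally.

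This creates a second, equally essential obligation your proposal omits: since $\wt\swap$ is defined only on a chosen special representative, one must prove the result is independent of that choice, i.e.\ $D\sim D'$ special implies $\wt\swap(D)\sim\wt\swap(D')$ (\Cref{prop:special}). This is the technical heart of \Cref{sec:sorting} and rests on nontrivial lemmas about reduced sequences of moves (\Cref{lem:square,lem:benzene}: a face cannot be flipped twice without flipping all its neighbors) together with \Cref{lem:special_equiv}. Your invertibility claim also inherits this gap, since the inverse is again only defined up to choice of representative. Two smaller points: the non-oscillating case (a type-$2$ boundary vertex) is not just another branch of a local case analysis --- it is handled by oscillizing, performing two oscillating swaps, and de-oscillizing --- and the Coxeter-relation verification you propose is not needed for the theorem as stated, which asserts only that composites of adjacent swaps are bijections, not that they define a canonical $S_n$-action.
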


For ease of exposition, in most of the paper we deal only with the classical groups $\SL_r$. All of our results apply also to invariants of the quantum group $U_q(\mathfrak{sl}_r)$, but this extension will be straightforward. 

\subsection{Outline}

In \Cref{sec:prelim} we recall background on web invariants and on the combinatorial constructions introduced in \cite{GPPSS-sl4}. In \Cref{sec:clasped-webs} we set up some machinery for clasping webs. In \Cref{sec:descents} we establish a correspondence between certain descents in a lattice word and bad local configurations in the corresponding basis web. In \Cref{Sec:Fund} we show that lattice words avoiding these descents (and therefore webs avoiding the bad configurations) have the right number to form a basis for the clasped invariant space. This is applied in \Cref{sec:main-sorted-proof} to prove \Cref{thm:intro-sorted}. \Cref{sec:sorting} introduces the swap map which is used to prove \Cref{thm:intro-swap} and thereby \Cref{thm:main}, after some verifications for $r=2$ and $3$, which take place in \Cref{sec:r-2-3}.

\section{Preliminaries}
\label{sec:prelim}

\subsection{Tensor invariant spaces}

Let $G$ be the group $\SL_r(\C)$ and $\g$ its Lie algebra, with Cartan subalgebra $\h\subset \g$ given by traceless diagonal matrices, and Borel subalgebra $\b\subset \g$ given by traceless upper triangular matrices. Let $\Lambda^+\subset \h^*$ denote the set of dominant integral weights for $G$, and for each $\lambda\in \Lambda^+$, let $V(\lambda)$ denote the irreducible finite-dimensional representation for $G$ with highest weight $\lambda$. In particular, for the fundamental weights $\omega_1, \dots, \omega_{r-1}$, we have
\[V(\omega_k)\simeq \bigwedge^k V, \hspace{5mm} 1\le k\le r-1\]
where $V = \C^r$ is the defining representation of $G$. 

\begin{definition}
    Let $[r] = \{1, 2, \dots, r\}$ and $\overline{[r]} = \{\overline 1, \overline2, \dots, \overline r\}$. Given a \emph{type} $\underline a = (a_1, \dots, a_n)$ with $a_i\in [r-1]$, let
    \[V^{\otimes \underline a} \coloneqq 
    \bigotimes_{i=1}^n V(\omega_{a_i})
    \simeq \bigotimes_{i=1}^n \left(\bigwedge^{a_i}V\right).\]
\end{definition}

For $r = 4$, G.--Pechenik--Pfannerer--Striker--Swanson \cite{GPPSS-sl4} gave a rotation invariant diagrammatic basis for the invariant space $\inv_G(V^{\otimes \underline a}) \coloneqq \hom_G(V^{\otimes \underline a}, \C)$, where $\C$ denotes the trivial $G$-representation, in terms of \emph{hourglass plabic graphs}.

\subsection{Hourglass plabic graphs}
\label{sec:hpg-background}

For the rest of the section, we specialize to $r = 4$. Our conventions and terminology differ only slightly from those of \cite{GPPSS-sl4}.

\begin{definition}
    An \emph{hourglass plabic graph} is a planar bipartite graph $W$ embedded in the disk with a fixed black-white vertex coloring and with boundary vertices labeled clockwise as $b_1, \dots, b_n$, such that:
    
    \begin{itemize}
        \item each edge $e$ has a \emph{multiplicity} $m(e)\in \{1,2\}$,
        \item each internal vertex has degree $4$ (counted with multiplicity), and
        \item each boundary vertex is adjacent to exactly one edge.
    \end{itemize}   
\end{definition}

Edges with $m(e)=1$ are called \emph{simple edges} and edges with $m(e)=2$ are called, and drawn as, \emph{hourglass edges} (see \Cref{fig:web}). The \emph{simple degree} of a vertex is its number of incident edges (\emph{ignoring} multiplicity). We consider $W$ up to planar isotopy fixing the boundary circle of the disk. The boundary face between $b_n$ and $b_1$ is called the \emph{base face} and denoted $F_0$.

\begin{definition}
    The \emph{type} of an hourglass plabic graph $W$ is the tuple $\type(W) = \underline a = (a_1,\dots, a_n)$ where
    \[a_i = \begin{cases}
        1 & \text{if }\deg(b_i) = 1\text{ and }b_i \text{ is black},\\
        2 & \text{if }\deg(b_i) = 2,\\
        3 & \text{if }\deg(b_i) = 1\text{ and }b_i \text{ is white}.
    \end{cases}\]
    
    $W$ is of \emph{oscillating type} if $a_i\in \{1, 3\}$ for each $1\le i\le n$, i.e., if $\deg(b_i) = 1$ for each $1\le i\le n$. A type written $\underline o$ is assumed to be oscillating.
\end{definition}

\begin{figure}[h]
    \centering
    \includegraphics[width=0.35\linewidth]{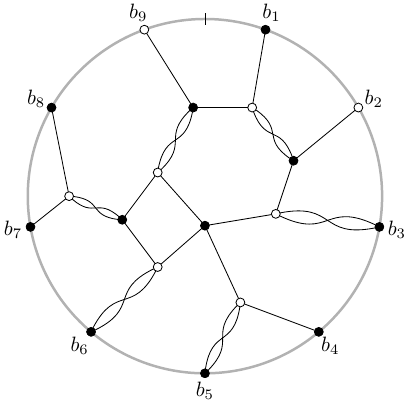} \hspace{.5in}\includegraphics[width=0.35\linewidth]{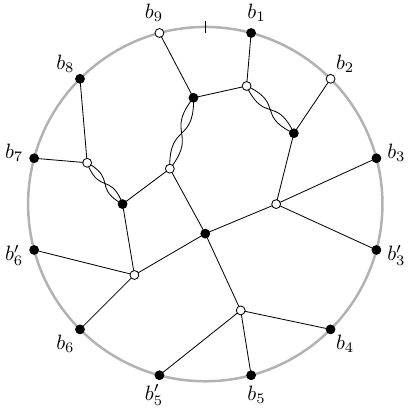}
    \caption{\textsc{(left)} An hourglass plabic graph of type $(1,3,2,1,2,2,1,1,3)$, and  \textsc{(right)} its oscillization.}
    \label{fig:web}
\end{figure}

Given an hourglass plabic graph of arbitrary type, there is a natural way to modify it to one of oscillating type. This oscillization procedure often allows us to reduce to the oscillating case.

\begin{definition}
    Let $W$ be an hourglass plabic graph, with boundary vertex $b_i$ connected to $v$ via an hourglass edge. Then the \emph{partial oscillization} of $W$ is the hourglass plabic graph $\osc_i(W)$ obtained by splitting $b_i$ into two boundary vertices $b_i$ and $b_i'$ which are both connected to $v$ via simple edges. The \emph{oscillization} of $W$, denoted $\osc(W)$, is obtained by performing all possible partial oscillizations. 
\end{definition}

In \cite{GPPSS-sl4}, the authors explain how to associate $\SL_4$ invariants to hourglass plabic graphs using \emph{proper labelings}: assignments of an $m(e)$-subset of $[4]$ to each edge such that the union around each internal vertex is $[4]$. Let $W$ be an hourglass plabic graph of oscillating type $\underline o$. This defines an invariant $[W]\in \inv_{\SL_4}(V^{\otimes \underline o})$, which is given in coordinates, up to sign, by
\[[W] = \sum_\phi (-1)^{\mathrm{sgn}(\phi)}x_{\partial(\phi)},\]
where the sum runs over all proper labelings $\phi$ of the edges of $W$. The monomial $x_{\partial(\phi)}$ records the colors of the boundary edges. If $W$ is of general type $\underline a$, then we obtain $[W]\in \inv_{\SL_4}(V^{\otimes \underline a})$ by just restricting $[\osc(W)]$ to $V^{\otimes \underline a}$. 

Hourglass plabic graphs are just combinatorial manifestations of webs, with the hourglass edges corresponding to $\omega_2$. We therefore use these words interchangeably. Nevertheless, the applicability of the combinatorics of \emph{moves} and \emph{trips} to algebraic questions is central to our work.

\subsection{Moves}
Hourglass plabic graphs admit \emph{contraction} moves, \emph{square} moves, and \emph{benzene} moves, as shown in \Cref{fig:contraction,fig:square,fig:benzene}.

\begin{figure}[h]
    \centering
    \includegraphics[width=0.5\linewidth]{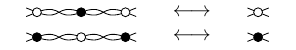}
    \caption{Contraction moves.}
    \label{fig:contraction}
\end{figure}

\begin{figure}[h]
    \centering
    \includegraphics[width=\linewidth]{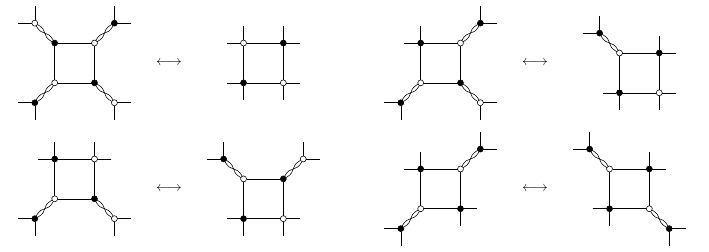}
    \caption{Square moves.}
    \label{fig:square}
\end{figure}

\begin{figure}[h]
    \centering
    \includegraphics[width=0.5\linewidth]{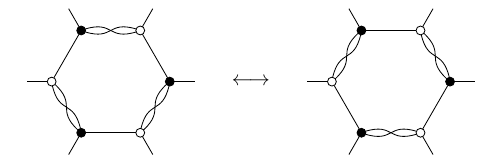}
    \caption{The benzene move.}
    \label{fig:benzene}
\end{figure}

\begin{definition}
    Two hourglass plabic graphs $W, W'$ are \emph{move equivalent}, written $W\sim W'$, if some sequence of contraction, square, and benzene moves transforms $W$ into $W'$. We say $W$ is \emph{contracted} if contraction moves have been applied to ensure there are no pairs of adjacent hourglass edges, except possibly at the boundary. We denote the set of contracted hourglass plabic graphs of type $\underline a$ by $\CG(\underline a)$. 
\end{definition}

\begin{remark}
    Contraction moves and square moves preserve the invariant corresponding to the hourglass plabic graph, but benzene moves do not (see \cite[\S~7.1]{GPPSS-sl4}).
\end{remark}

In analogy to the $\SL_3$ case and to Postnikov's plabic graphs \cite{Postnikov-ICM}, the \emph{full reducedness} of hourglass plabic graphs is characterized by forbidding certain local configurations.

\begin{definition}
    An hourglass plabic graph $W$ is called \emph{fully reduced} if it has no isolated components, and if no $W'\sim W$ contains a 4-cycle with an hourglass edge. We denote the set of contracted fully reduced hourglass plabic graphs of type $\underline a$ by $\CRG(\underline a)$.
\end{definition}

Full reducedness is precisely the restriction that cuts down the spanning set of all webs to a basis for $\inv_G(V^{\otimes \underline a})$, after choosing suitable representatives form each move-equivalence class. We say $W \in \CRG(\underline a)$ is \emph{top} if in each hourglass edge of a benzene face the white vertex precedes the black vertex in clockwise order. Let $\TCRG(\underline a)\subset \CRG(\underline a)$ be the set of top fully reduced hourglass plabic graphs of type $\underline a$.

\begin{thm}[Thm.~A of \cite{GPPSS-sl4}]\label{thm:web_basis}
     For any type $\underline{a} \in [r-1]^n$, the set of invariants $\W_{\underline a} \coloneqq \{[W]: W\in \TCRG(\underline a)\}$ forms a basis for $\inv_G(V^{\otimes \underline a})$.
\end{thm}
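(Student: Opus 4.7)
The plan is to establish the three usual ingredients for a basis claim: a cardinality match, spanning, and linear independence. For cardinality, I would aim to match $|\TCRG(\underline a)|$ with $\dim\inv_G(V^{\otimes \underline a})$, the latter being a Littlewood--Richardson-type count which, in the oscillating case, enumerates certain lattice words (for general $\underline a$, fluctuating tableaux of content $\underline a$ and shape $\emptyset$). The natural candidate bijection is the trip-based map $\partial\mathrm{sep}$ mentioned in the paper, which reads a lattice word off of a top fully reduced web. To show this is a well-defined bijection, I would argue that the ``top'' convention selects exactly one representative from each move-equivalence class (benzene moves are precisely the moves that fail to preserve the invariant, and they toggle between top and non-top representatives), while contraction and square moves preserve both the invariant and the trip structure, so the lattice word depends only on the class.

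For spanning, I would invoke the Cautis--Kamnitzer--Morrison skein relations (or an hourglass reformulation of them) to write an arbitrary web as a $\C$-linear combination of contracted webs, then use a confluence/termination argument to further reduce to the fully reduced ones. Concretely, whenever a $4$-cycle containing an hourglass edge appears anywhere in the move-equivalence class of a web, a sequence of square and benzene moves can expose it, and the local skein relation associated to that configuration strictly decreases a suitable complexity (e.g., the number of simple edges or faces), so the reduction process terminates in $\TCRG(\underline a)$.

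The main obstacle is linear independence, and this is where I would invest the most effort. My plan is a \emph{leading term} argument: expand $[W] = \sum_\phi (-1)^{\mathrm{sgn}(\phi)}x_{\partial(\phi)}$ over proper labelings $\phi$ of $W$, and exhibit, for each $W \in \TCRG(\underline a)$, a distinguished proper labeling whose boundary datum $\partial(\phi)$ is determined directly by the trips of $W$. The trips are designed to track how the labels $\{1,2,3,4\}$ can thread through the graph, and the ``top'' convention together with full reducedness should force this canonical labeling to exist and be unique. Under a carefully chosen term order on boundary monomials, I would show that this distinguished boundary datum is the unique maximum contribution to $[W]$, and that the map $W \mapsto \partial(\phi_W)$ agrees with $\partial\mathrm{sep}$ and is therefore injective on $\TCRG(\underline a)$. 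Distinct leading monomials then force the $[W]$ to be linearly independent. Combined with the cardinality match this yields the basis statement for $G = \SL_r$; the $U_q(\mathfrak{sl}_r)$ extension follows by standard flatness of the quantum deformation, since the dimensions of the corresponding invariant spaces are constant in $q$.
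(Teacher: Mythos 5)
This statement is quoted verbatim from Theorem~A of \cite{GPPSS-sl4}; the present paper gives no proof of it, so there is no internal argument to compare against. Your outline does, however, track the strategy of the cited reference quite faithfully: the cardinality match via the bijection $W\mapsto\partial\mathrm{sep}_W$ with (balanced lattice words of) fluctuating tableaux, and linear independence via a leading-term argument in which the separation labeling supplies, for each $W\in\TCRG(\underline a)$, a distinguished proper labeling whose boundary word is lexicographically extremal and injective on move-equivalence classes. Two remarks: first, spanning does not require the confluence/termination argument you sketch --- once independence and the dimension count are in hand, spanning of the (known-to-span) set of all web invariants descends automatically, and this is how the reference avoids a rewriting argument; second, be aware that the real technical content hidden in your plan is showing that $\sep_W$ is actually a proper labeling and that its boundary word is extremal, which in \cite{GPPSS-sl4} consumes the bulk of the trip/monotonicity machinery rather than following ``directly'' from the top convention.
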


\subsection{Trips in hourglass plabic graphs} In light of \Cref{thm:web_basis}, we would like a criterion for the full reducedness of a given web $W$ that does not require exploring the move-equivalence class. Such a criterion is obtained by analyzing the \emph{$\trip_\bullet$-strands} of $W$.

\begin{definition}
    Let $W$ be an hourglass plabic graph and $b_j$ a boundary vertex. For $i\in[3]$, the \emph{$\trip_i$-strand through $b_j$} is defined as the walk on $W$ starting from $b_j$ and following the \textit{rules of the road} until ending at some other boundary vertex (see \Cref{fig:trip-example}):
    \begin{itemize}
        \item At a black vertex, take the $i$-th rightmost turn;
        \item At a white vertex, take the $i$-th leftmost turn.
    \end{itemize}
    
    When the boundary vertex $b_j$ is incident to a simple edge, there is no ambiguity as to what the first edge of the walk should be. However if $b_j$ is incident to an hourglass, there are two choices for the first edge of the walk. When $i=1$ or $i=3$, one of these choices leads to a walk that ``bounces back" to $b_j$, so we pick the starting edge that does not do so. Thus there is one $\trip_1$- and one $\trip_3$-strand through such a vertex, but two $\trip_2$-strands. We write $\trip_i(W)$ for the resulting map from boundary vertices to (pairs of) boundary vertices and $\trip_{\bullet}(W)$ for the tuple of these maps.
\end{definition}

\begin{figure*}[h!]
    \centering
    \begin{subfigure}[h]{0.5\textwidth}
        \centering
        \includegraphics[width=0.7\linewidth]{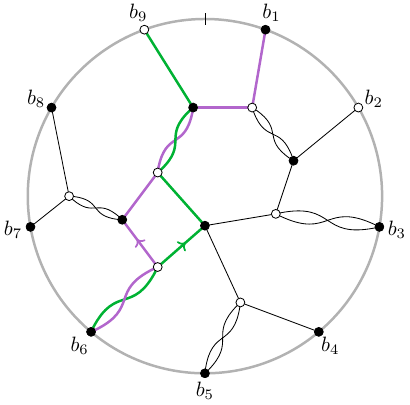}
        \caption{$\trip_1$- in green, $\trip_3$- in purple}
    \end{subfigure}%
    ~ 
    \begin{subfigure}[h]{0.5\textwidth}
        \centering
        \includegraphics[width=0.7\linewidth]{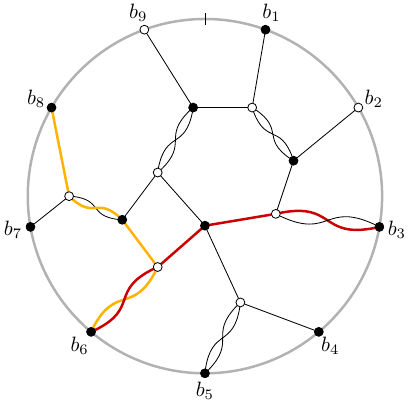}
        \caption{$\trip_2$-strands in orange and red}
    \end{subfigure}
    \caption{Trips through $b_6$.}
\label{fig:trip-example}
\end{figure*}

\begin{remark}
    $\trip_3$-strands are just $\trip_1$-strands traversed in reverse. Thus, we can often make statements about $\trip_3$-strands in terms of $\trip_1$-strands, especially when the direction of traversal is irrelevant. Moreover, $\trip_2$-strands are the same when traversed in either direction, so we do not specify this direction in figures.  
\end{remark}

Equipped with the notion of trips, we can state the criterion for full reducedness in terms of monotonicity of trips. We will not need this criterion in this paper, as we shall use the one in terms of symmetrized six-vertex configurations instead (\Cref{prop:well_oriented}), but we include it here for completeness.

\begin{definition}
    An hourglass plabic graph $W$ is \emph{monotonic} if $\trip_2$-strands have no self intersections or double crossings, and if for every $\trip_1$-strand $\ell_1$ and $\trip_2$-strand $\ell_2$, the vertices lying on both $\ell_1$and $\ell_2$ are consecutive along both $\ell_1$ and $\ell_2$.
\end{definition}

\begin{thm}[Thm.~3.13 \& Cor.~3.33 of \cite{GPPSS-sl4}]
    An hourglass plabic graph $W$ is fully reduced if and only if it is monotonic. Moreover if $W, W'\in \CRG(\underline a)$, then $W\sim W'$ if and only if $\trip_\bullet(W)=\trip_\bullet(W')$.
\end{thm}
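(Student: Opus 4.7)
The plan is to prove the two assertions of the theorem separately, though both hinge on the compatibility of the local moves with the trip structure of hourglass plabic graphs. Throughout I would exploit planarity in the disk and induct on a suitable complexity measure of $W$ (e.g., the number of internal vertices counted with multiplicity).

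For the equivalence of full reducedness and monotonicity, I would first verify that each of the three move types in \Cref{fig:contraction,fig:square,fig:benzene} preserves all trip maps $\trip_i$; this is a finite local check against the rules-of-the-road and establishes that the trip data is an invariant of the move-equivalence class. Granted this, the direction ``monotonic $\Rightarrow$ fully reduced'' follows by contraposition: if $W \sim W'$ with $W'$ containing a 4-cycle with an hourglass edge, I would trace the $\trip_1$- and $\trip_2$-strands entering the cycle and check that the hourglass forces either a double crossing of a $\trip_2$-strand or a non-consecutive shared pair between a $\trip_1$- and $\trip_2$-strand; since trips are preserved by the moves, this non-monotonic pattern transports back to $W$. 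For the converse, given a non-monotonic witness in $W$, I would pick an innermost bigon bounded by the offending strand segments, simplify its interior using contraction and square moves, and verify that the minimality of the bigon forces a 4-cycle with an hourglass edge.

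For the second assertion, the forward implication was already handled above. The converse ($\trip_\bullet(W) = \trip_\bullet(W') \Rightarrow W \sim W'$) is the heart of the matter, and the plan is a normal-form argument. I would define a canonical representative within each move-equivalence class of $\CRG(\underline a)$ and show it is determined by the trip data alone. One would induct outward from the base face $F_0$: the strands emerging from $b_1, b_2, \ldots$ partition the disk into cells whose boundary incidences are prescribed by the trips, and within each cell one may use contraction and square moves to bring the local structure to a forced shape. Benzene-move ambiguity is resolved by always passing to the ``top'' representative, as in the definition of $\TCRG(\underline a)$. A useful preliminary reduction is to pass to oscillating type via $\osc$, since the strand combinatorics is cleaner there.

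The main obstacle is the inductive step in this normal-form argument, particularly the handling of benzene moves. Benzene moves preserve trips but do not preserve the web invariant, so they can reorganize the interior of a web in ways invisible to the trip data; the canonical form must therefore pin down a distinguished element of each benzene-equivalence class, and global consistency of these choices across cells must be verified simultaneously with the inductive reduction. A viable route is to phase the induction through the symmetrized six-vertex configurations alluded to in the text, enumerating the possible local configurations around each benzene face and checking by hand that they are rigid once the trip data is fixed.
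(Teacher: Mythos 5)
This statement is not proved in the paper at all: it is imported verbatim from \cite{GPPSS-sl4} (Thm.~3.13 and Cor.~3.33 there), so there is no in-paper argument to compare against. Judged on its own, your sketch identifies the right landmarks (move-invariance of trip data, a bigon/normal-form reduction, passage to the six-vertex model) but contains a genuine logical gap and leaves the hard content unproved.

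The gap: in the direction ``monotonic $\Rightarrow$ fully reduced'' you argue that a bad $4$-cycle in some $W'\sim W$ produces a non-monotonic pattern in $W'$, and that ``since trips are preserved by the moves, this non-monotonic pattern transports back to $W$.'' What the local check gives you is that the moves preserve the boundary-to-boundary maps $\trip_i$, i.e.\ the \emph{endpoints} of strands. Monotonicity, however, is a statement about the \emph{internal geometry} of the strands --- self-intersections, double crossings, and consecutiveness of shared vertices --- and this is not determined by the endpoint data. A benzene move (Yang--Baxter move on the six-vertex side) genuinely slides a $\trip_2$-strand across a crossing and reorganizes which interior vertices lie on which strands, so preservation of $\trip_\bullet$ does not imply that a non-monotonic witness transports across the move. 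What is actually needed (and is true, but requires its own case analysis) is that each move preserves the monotonicity \emph{property} itself; your argument as written is a non sequitur at exactly this point. Beyond that, the two remaining steps --- the innermost-bigon reduction producing a forbidden $4$-cycle, and the normal-form argument showing $\trip_\bullet(W)=\trip_\bullet(W')\Rightarrow W\sim W'$ --- are where essentially all of the difficulty of \cite{GPPSS-sl4} lives, and your sketch asserts rather than establishes them. For the second assertion in particular, the route taken in \cite{GPPSS-sl4} is not a cell-by-cell canonical form but goes through the separation labeling (which is computed from $\trip_\bullet$) and the growth algorithm, which reconstructs the move-equivalence class from the resulting lattice word; if you want a self-contained proof you would need to either carry out that program or supply the rigidity analysis around benzene faces that you defer to a ``by hand'' check.
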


\subsection{Separation labeling}

Given a contracted fully reduced hourglass plabic graph $W$ of oscillating type, there is a distinguished proper labeling of its edges called the \emph{separation labeling} $\sep_W$ defined as follows:

\begin{itemize}[leftmargin=*]
    \item For each simple edge $e$ of $W$, let $F(e)$ be the face incident to $e$ which is to the right of $e$ when traversed from the black vertex to the white vertex. Then
    \[\sep_W(e) \coloneqq 1+\big|S(e)\big|,\]
    where $S(e)\subset [3]$ is the set of all $k$ such that the $\trip_k$-strand through $e$ separates $F_0$ from $F(e)$.
    \item For each hourglass edge $e$ of $W$, let $v$ be either of the endpoints of $e$, and let $e', e''$ be other two (simple) edges of $W$ incident to $v$. Then 
    \[\sep_W(e) \coloneqq [4]\setminus \{\sep_W(e'), \sep_W(e'')\}.\]
\end{itemize}

For a contracted fully reduced hourglass plabic graph $W$ of general type, we define $\sep_W$ by first computing $\sep_{\osc(W)}$, labeling each internal edge and boundary simple edge of $W$ by the corresponding label in $\sep_{\osc(W)}$, and each boundary hourglass edge by the set of labels appearing on the corresponding pair of boundary simple edges of $\osc(W)$. Note that this definition makes sense with our slightly more general notion of contracted fully reduced hourglass plabic graphs, where we allow length $2$ chains of hourglass edges at the boundary, because the oscillization is still a contracted fully reduced hourglass plabic graph of oscillating type in the sense of \cite{GPPSS-sl4}.

Let $b_1,\dots, b_n$ be the boundary vertices of $W$ with incident edges $e_1,\dots, e_n$ respectively. Then the separation labeling of $W$ gives a word $\L(W) \coloneqq \partial (\sep_W) = w_1\cdots w_n$ in the letters $[4]\sqcup {[4]\choose 2}\sqcup \overline{[4]}$, with 
\[w_i = \begin{cases}
    \overline{\sep_W(e_i)} & \text{if } a_i = 3,\\
    \sep_W(e_i) & \text{if } a_i = 1\text{ or } 2.\\ 
\end{cases}\]

\begin{definition}\label{def:sortedword}
    Let $L = w_1\cdots w_n$ be a word in the letters $[4]\cup \overline{[4]}\cup {[4]\choose 2}$. We say

    \[ |w_i| = 
    \begin{cases}
        1 & \text{if } w_i\in [4]\\
        2 & \text{if } w_i\in {[4]\choose 2}\\
        3 & \text{if } w_i\in \overline{[4]}
    \end{cases}
    \]
    The \emph{type} of $L$ is $\type(L) = (|w_1|,\dots, |w_n|),$ and we say that $L$ is \emph{sorted} if $|w_1|\le \cdots \le |w_n|$.
\end{definition}

\begin{remark}
    It follows from the definition that $\type(\L(W)) = \type(W)$.
\end{remark}

Recall that a \emph{lattice word} $L=w_1\cdots w_n$ is a word in which for each prefix $w_1\cdots w_i$, the number of occurrences of $a$ is greater than or equal to the number of occurrences of $b$ for all $a<b\in[4]$ (barred letters count as $-1$ appearance of the corresponding unbarred letter). For example, $1\{2, 3\}\overline 4$ is a lattice word, but $1\overline 2\{3, 4\}$ is not. A lattice word is \emph{balanced} if the number of occurrences of each element of $[4]$ are equal. For example, $1\{2, 3\}\{1, 2\} \overline 2 4\overline 1$ is balanced, whereas $1\{1,2\}3\{2, 4\} \overline 4$ is not. We have:

\begin{thm}[Thm.~4.24 of \cite{GPPSS-sl4}]
    If $W\in \CRG(\underline a)$, then $\L(W)$ is a balanced lattice word.
\end{thm}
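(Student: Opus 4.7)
The plan is to first handle the oscillating case and then transfer to general types via oscillization. For the reduction, $\L(W)$ is obtained from $\L(\osc(W))$ by merging each consecutive pair of simple-letter labels arising from an oscillized boundary hourglass edge into the corresponding $2$-subset. Balancedness transfers trivially. For the lattice-word property, one checks that at each oscillized pair the two simple labels are distinct (forced by properness at the shared internal vertex), so the merged $2$-subset is valid, and the merged prefix condition follows from the unmerged one.

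For the oscillating case, balancedness follows from the proper-labeling structure of $\sep_W$. After passing to $\osc(W)$ each edge carries a single label in $[4]$, and each $k \in [4]$ appears exactly once among the labels at each internal vertex, so the subgraph $W_k$ of edges labeled $k$ has degree one at every internal vertex and hence is a disjoint union of paths between boundary vertices. Bipartiteness forces each such path to connect a black boundary vertex to a white one, giving equal counts of $k$ (contributed by black endpoints) and $\bar k$ (contributed by white endpoints) in $\L(\osc(W))$; these counts survive the merging to $\L(W)$, so all four letters have net count zero, confirming balancedness.

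The lattice-word property is the harder half. I would argue via the $\trip_k$-strand geometry, writing $\sep_W(e_i) = 1 + |S(e_i)|$ and analyzing how the sets $S(e_i) \subseteq [3]$ vary along the boundary. Fix $j \in [3]$ and a prefix length $t$; the inequality $\mathrm{count}(j) \ge \mathrm{count}(j+1)$ translates into a signed sum involving $\mathbf{1}\{|S(e_i)| = j-1\} - \mathbf{1}\{|S(e_i)| = j\}$ weighted by $\mathrm{sgn}(b_i) = \pm 1$ according as $b_i$ is black or white. Each $\trip_k$-strand is a monotonic arc dividing the disk into two regions, and $k \in S(e_i)$ records which side contains $F(e_i)$. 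Exploiting the non-crossing/nesting structure of the $\trip_k$-strands guaranteed by full reducedness, one peels off innermost strands (those nearest $F_0$) and tracks their contributions to the prefix sums. The main obstacle is precisely this step: aligning the interplay of the three families $\trip_1, \trip_2, \trip_3$ with the prefix inequalities and boundary signs requires delicate bookkeeping. One may instead proceed by structural induction on $W$, reducing via moves (contractions and carefully chosen squares) to small base cases, or invoke the bijection with fluctuating tableaux — a known source of lattice words — to transfer the property.
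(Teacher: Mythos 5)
This theorem is quoted in the paper as Thm.~4.24 of \cite{GPPSS-sl4} with no internal proof, so your argument has to stand on its own; as written it does not, for two reasons.

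First, the balancedness step contains a structural error. If every internal vertex has degree exactly one in the subgraph $W_k$ of edges whose label contains $k$, then $W_k$ is a \emph{matching}, not a disjoint union of paths between boundary vertices: a path passing through an internal vertex would give that vertex degree two. In particular an edge of $W_k$ may join two internal vertices and contribute nothing to the boundary word. The conclusion you draw --- that each $k$ has net count zero in $\L(W)$ --- is also false: for the web of type $(1,1,1,1)$ consisting of one white internal vertex joined to four black boundary vertices, $\L(W)$ is a permutation of $1234$ and each letter has net count $1$. (The paper's definition of balanced only requires the counts to be \emph{equal}, not zero.) The salvageable version of your idea is a double count of $k$-incidences at black versus white internal vertices: with $B$ black and $N_w$ white internal vertices, each internal $k$-edge has one endpoint of each color and each boundary $k$-edge has one internal endpoint, so the net boundary count of $k$ equals $N_w-B$ for every $k$, hence the four counts agree. (Also note that $\osc(W)$ still has internal hourglass edges carrying $2$-subsets; only boundary hourglasses are split.)

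Second, and more seriously, the lattice-word half is not proved. You sketch a strategy via prefix sums over $\trip_k$-strands, explicitly flag its key step as an obstacle you have not resolved, and then offer two alternatives, neither of which closes the gap as stated: the ``structural induction via moves'' is not carried out (and requires knowing that the moves preserve $\partial\sep_W$, which is itself a nontrivial theorem), while ``invoking the bijection with fluctuating tableaux'' is circular, since in \cite{GPPSS-sl4} that bijection \emph{is} the map $W\mapsto\partial\sep_W$ and the content of the cited theorem is precisely that its image consists of (balanced) lattice words. So the harder direction of the statement is missing rather than merely under-detailed.
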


\subsection{Symmetrized six-vertex configurations}

Hourglass plabic graphs of oscillating type are in bijection with \emph{symmetrized six-vertex configurations}. The latter are better suited for some of our arguments, so we recall the definitions and the bijection here.

\begin{definition}[See \cite{GPPSS-sl4, Hagemeyer}]
    A \emph{symmetrized six-vertex configuration} $D$ is a planar directed graph embedded in a disk with boundary vertices $b_1,\dots, b_n$ (labeled clockwise) of degree $1$, such that each internal vertex is some rotation of a vertex from \Cref{fig:ssv}.

    \begin{figure}[h]
        \centering
        \includegraphics[height=0.7in]{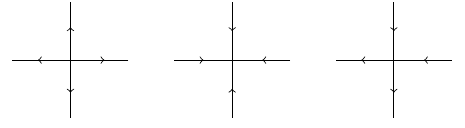}
        \label{fig:ssv}
    \end{figure}

    These vertices are called \emph{sources, sinks}, and \emph{transmitting vertices} respectively. The \emph{type} of $D$ is $\type(D) = \underline o = (o_1,\dots, o_n)$ where
    \[o_i = \begin{cases}
        1 & \text{if the edge incident to }b_i\text{ is oriented inwards,}\\
        3 & \text{if the edge incident to }b_i\text{ is oriented outwards.}
    \end{cases}\]
    We denote the set of all symmetrized six-vertex configurations of type $\underline o$ by $\SSV(\underline o)$.
\end{definition}

Symmetrized six-vertex configurations admit \emph{Yang--Baxter} and \emph{ASM} moves, which are analogs of benzene moves and square moves for hourglass plabic graphs.

\begin{figure*}[h!]
    \centering
    \begin{subfigure}[h]{0.5\textwidth}
        \centering
        \includegraphics[height=0.8in]{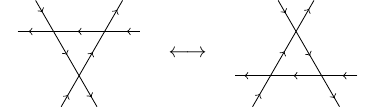}
        \caption{Yang--Baxter move}
    \end{subfigure}%
    ~ 
    \begin{subfigure}[h]{0.5\textwidth}
        \centering
        \includegraphics[height=0.8in]{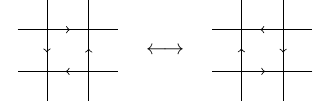}
        \caption{ASM move}
    \end{subfigure}
    \caption{Moves on symmetrized six-vertex configurations. For the ASM move, omitted orientations can be picked in any way yielding valid vertices.}
\end{figure*}

\begin{definition}
    Two symmetrized six-vertex configurations $D, D'$ are \emph{move equivalent}, denoted $D\sim D'$, if some sequence of ASM and Yang--Baxter moves transfroms $D$ into $D'$.
\end{definition}

The analog of full reducedness for symmetrized six-vertex configurations is \emph{well-orientedness}.

\begin{definition}
    A symmetrized six-vertex configuration $D$ is \emph{well-oriented} if it is simple (no loops or multiple edges), has no isolated components, and every 3-cycle in any $D'\sim D$ is cyclically oriented. We denote the set of well-oriented symmetrized six-vertex configurations of type $\underline o$ by $\WSSV(\underline o)$.
\end{definition}

As with full reducedness for hourglass plabic graphs, \cite{GPPSS-sl4} gives a criterion for checking well-orientedness of a symmetrized six-vertex configuration without exploring the entire move-equivalence class.

\begin{definition}
    A \emph{$\trip_2$-strand} of $D\in \SSV(\underline o)$ is a walk in $D$ obtained by starting at a boundary vertex of $D$ and walking along edges by going straight across to the opposite edge at each internal vertex, until reaching the boundary again. 
\end{definition}

\begin{proposition}[Lem.~3.20 \& Prop.~3.21 of \cite{GPPSS-sl4}]\label{prop:well_oriented}
    A symmetrized six-vertex configuration $D$ with no isolated components is well-oriented if and only if it satisfies the following two properties:
    \begin{itemize}
        \item[(P1)] The $\trip_2$-strands of $D$ do not have self intersections or double crossings.
        \item[(P2)] Big triangles are oriented, i.e., if $\ell_1,\ell_2,\ell_3$ are pairwise intersecting $\trip_2$ strands then the boundary of the triangle formed by these strands is cyclically oriented.
    \end{itemize}
    Furthermore, any well-oriented configuration $D$ satisfies the additional property:
    \begin{itemize}
        \item[(P3)] In any collection of 4 $\trip_2$-strands of $D$, there is some pair of strands that do not intersect.
    \end{itemize}
\end{proposition}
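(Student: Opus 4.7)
The plan is to exploit the move-invariance of the $\trip_2$-strand crossing pattern. My first step is to verify, by local inspection of each move type, that the set of $\trip_2$-strands of $D$ (as an unoriented planar curve system up to isotopy together with its intersection data) depends only on the equivalence class $[D]$: the Yang--Baxter move acts as a Reidemeister-III on the three participating strands, while each ASM move leaves the straight-through strands crossing the affected square unchanged as curves. Consequently conditions (P1), (P2), and (P3) depend only on $[D]$, so I may freely replace $D$ by any equivalent configuration when it is convenient.

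For the easier ``only if'' direction I would show that well-orientedness forces (P1) and (P2). A $\trip_2$-strand with a self-intersection or double crossing can be simplified by sweeping non-participating strands out of the bigon via Yang--Baxter moves and then applying an ASM move to produce either a multi-edge (contradicting simplicity) or a $3$-cycle of the wrong orientation in some equivalent $D'$. A failure of (P2) is contracted by Yang--Baxter moves to a literal $3$-cycle in an equivalent configuration whose boundary inherits the hypothesized non-cyclic orientation. For (P3), I would argue that four pairwise crossing strands force $\binom{4}{2}=6$ intersections arranged as a pseudoline arrangement, and the four triangular regions they carve out in the disk, each cyclically oriented by (P2), impose parity constraints on the six vertex types (source, sink, or transmitting) that are mutually incompatible.

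For the harder ``if'' direction I would argue contrapositively: if some $D' \sim D$ contains a non-cyclically-oriented $3$-cycle $T$, then by (P1) the three edges of $T$ lie on three distinct $\trip_2$-strands (otherwise a strand would revisit the vicinity of $T$ and produce a bigon forbidden by (P1)). These strands pairwise intersect at the vertices of $T$, so $T$ sits as the innermost face of a big triangle, and (P2) says the boundary of this big triangle is cyclically oriented. A case analysis at each vertex of $T$ of whether it is a source, sink, or transmitting vertex would then show that the cyclic orientation of the big triangle propagates inward to $T$ itself, contradicting the choice of $T$. I expect this orientation-propagation analysis, together with the four-strand incompatibility for (P3), to be the main technical obstacle: both require tracking how edge orientations behave across the three local vertex types, and the (P3) argument additionally needs a careful enumeration of the planar arrangements of four pairwise crossing pseudolines in a disk, organized by the cyclic order of the eight strand-endpoints on the boundary circle.
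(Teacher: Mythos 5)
This proposition is not proved in the paper at all: it is imported verbatim from \cite{GPPSS-sl4} (Lem.~3.20 and Prop.~3.21 there), so there is no in-paper argument to compare against. Judged on its own terms, your plan follows the standard strategy (move-invariance of the $\trip_2$-strand system, reduction of bad configurations to local ones via Yang--Baxter moves, and the four-triangle obstruction for (P3)), but two points need attention.

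First, the ``if'' direction is simpler than your propagation argument suggests, and recognizing this closes what would otherwise be a delicate step. If $D'\sim D$ contains a $3$-cycle $T$, then (P1) (which you have already shown is move-invariant, hence holds in $D'$) forces the three edges of $T$ to lie on three distinct strands, each pair meeting exactly once; the vertices of $T$ are therefore precisely the three pairwise intersection points, and the sides of $T$, being single edges, are exactly the strand arcs between consecutive intersections. So $T$ \emph{is} the big triangle of $\ell_1,\ell_2,\ell_3$, and (P2) applies to it directly --- there is no larger triangle to propagate an orientation inward from, and no case analysis at the vertex types is needed. (Simplicity of $D'$ likewise follows from (P1): a loop or double edge forces a strand self-intersection or a double crossing.) Relatedly, in the ``only if'' direction an ASM move cannot produce a multi-edge, since ASM moves do not change the underlying undirected graph; the multi-edge or bad $3$-cycle must come from the Yang--Baxter sweeps, whose applicability you should justify by noting that under the well-orientedness hypothesis every small triangle encountered is cyclically oriented.

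Second, and more seriously, the (P3) argument is the real content of the proposition and your sketch of it is both incomplete and geometrically imprecise. Four pairwise-crossing strands do not ``carve out four triangular regions'': the four big triangles $T_{ijk}$ are not faces of the arrangement --- depending on whether the fourth strand passes through the interior of $T_{123}$, they nest or overlap, and their sides contain intermediate crossings. The needed statement is that the four big triangles cannot all be cyclically oriented, and ``parity constraints on the six vertex types'' is an assertion, not a proof; it is exactly here that one must track how the required boundary orientations of overlapping big triangles force contradictory orientations on shared strand segments. Until that combinatorial incompatibility is actually exhibited (say, by fixing the two planar arrangement types of four pairwise-crossing pseudolines in a disk and deriving the contradiction in each), the proof of (P3) is missing its core.
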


Given an hourglass plabic graph $W$ of oscillating type $\underline o$, we obtain a symmetrized six-vertex configuration $\varphi(W)\in \SSV(\underline o)$ by orienting all edges from black vertices to white vertices and contracting every hourglass edge to a point. Additionally, $\varphi$ intertwines benzene moves with Yang--Baxter moves, square moves with ASM moves, and full reducedness with well-orientedness.

\begin{thm}[\cite{GPPSS-sl4}, Thm 3.25]\label{thm:six_vertex}
    The map $\varphi: \CG(\underline o)\to \SSV(\underline o)$ is a bijection that intertwines benzene moves with Yang--Baxter moves and square moves with ASM moves. Moreover, $W\in \CRG(\underline o)$ if and only if $\varphi(W)\in \WSSV(\underline o)$.
\end{thm}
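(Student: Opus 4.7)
The plan is to prove the three claims of the theorem --- that $\varphi$ is a bijection, that it intertwines the two sets of moves, and that full reducedness corresponds to well-orientedness --- in order, with the real substance concentrated in the last claim.

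First I would verify that $\varphi$ is well-defined. Since $W$ is contracted, every internal vertex of $W$ has either simple degree $4$ (all incident edges simple) or simple degree $3$ (exactly one incident hourglass). Under $\varphi$, a simple-degree-$4$ black vertex becomes a source and its white counterpart a sink, while two simple-degree-$3$ vertices joined by an hourglass merge into a single degree-$4$ vertex whose cyclic sequence of edges alternates out, in, out, in --- precisely a transmitting vertex. The alternation is forced by the twisting convention for hourglass edges together with the planar embedding. An inverse is then built by sending each source/sink to a black/white degree-$4$ vertex and blowing up each transmitting vertex into a black and a white vertex joined by an hourglass, with outgoing edges attached to the black end and incoming to the white; consistency is again guaranteed by the alternation at transmitting vertices. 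Intertwining of moves is a local, picture-by-picture verification: contracting the three hourglasses of a benzene face yields exactly the triangle configuration of the Yang--Baxter move, and the simple edges of a square face already form the ASM square.

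The main obstacle is the equivalence $W\in \CRG(\underline o) \Leftrightarrow \varphi(W)\in \WSSV(\underline o)$, since both are defined by move-closed conditions. The key local observation is that an internal $4$-cycle of $W$ containing an hourglass contracts, under $\varphi$, to a $3$-cycle of $\varphi(W)$; moreover the three remaining simple edges of the $4$-cycle alternate between black and white endpoints, so at the merged (transmitting) vertex both cycle edges inherit the same orientation, and the resulting triangle is \emph{not} cyclically oriented. Conversely, every non-cyclically oriented triangle in $\varphi(W)$ must contain a transmitting vertex (a triangle of sources and sinks alone is impossible in a simple directed graph, since two sources cannot be joined by an edge and similarly for sinks), and blowing up that vertex recovers a $4$-cycle containing an hourglass. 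Combining this local correspondence with the already-established intertwining of moves promotes the statement to a move-closed one: there exists $W'\sim W$ with a forbidden $4$-cycle if and only if there exists $\varphi(W')\sim \varphi(W)$ with a non-cyclically oriented triangle. Applying \Cref{prop:well_oriented} then finishes the argument, while the no-isolated-components condition transfers trivially.

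I expect the hardest bookkeeping to be the cyclic-order analysis at transmitting vertices --- one must verify that the blow-up produces exactly one hourglass edge in the correct cyclic position (so that the resulting graph is genuinely contracted and has the prescribed type $\underline o$), rather than forcing an adjacent pair of hourglasses that the contracted hypothesis would forbid. A secondary subtlety is to check that "3-cycle in $\varphi(W')$" and "4-cycle with hourglass in $W'$" match bijectively under blow-up/contraction across the entire move class, not just locally at a single configuration; this is where the careful intertwining of benzene/Yang--Baxter and square/ASM moves from the previous step is essential.
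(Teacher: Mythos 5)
Note first that the paper does not prove this statement; it is imported verbatim from \cite{GPPSS-sl4}, Thm.~3.25, so there is no in-paper argument to compare against and your proposal must stand on its own. Your overall architecture --- a local dictionary between forbidden configurations, promoted to move-equivalence classes via the intertwining of benzene/Yang--Baxter and square/ASM moves --- is the right one. However, two of your local claims are off. First, contracting a planar edge $uv$ places $u$'s remaining edges consecutively and $v$'s remaining edges consecutively in the cyclic order around the merged vertex, so a transmitting vertex has cyclic pattern out, out, in, in, \emph{not} the alternating out, in, out, in you assert (alternation would in fact be inconsistent with the $\trip_2$ rule of ``going straight across,'' which must send an $u$-edge to a $v$-edge). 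Second, in the image of a $4$-cycle $u\,{-_h}\,v - w - x - u$ the two triangle edges at the merged vertex $t$ are one incoming ($w\to t$, since $w$ is black and $v$ white) and one outgoing ($t\to x$), not ``the same orientation''; the triangle fails to be cyclically oriented because the third edge is forced by bipartiteness to be $w\to x$, which is the wrong way around. The conclusion survives, but not for the reason you give.

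The more serious gap is in the converse direction. You rule out all-source/sink triangles and then assert that blowing up a transmitting vertex of a non-cyclically oriented triangle yields a $4$-cycle with an hourglass. But a triangle may contain up to three transmitting vertices, and at each one the two triangle edges either separate into the black and white halves of the blow-up (inserting an hourglass into the cycle) or land on the same half (inserting nothing). The preimage is a cycle of length $3+s$, where $s$ is the number of separating vertices; bipartiteness of $W$ forces $s$ odd, so $s\in\{1,3\}$. When $s=3$ the preimage is a hexagon with three alternating hourglasses --- a benzene face, which is a \emph{permitted} configuration --- and one checks that such a triangle is necessarily cyclically oriented. You must therefore prove that every non-cyclically oriented triangle has $s=1$ (equivalently, that $s=3$ forces cyclic orientation); without this parity/orientation analysis the dictionary ``non-cyclic triangle $\leftrightarrow$ $4$-cycle with hourglass'' is not established and the equivalence $W\in\CRG(\underline o)\Leftrightarrow\varphi(W)\in\WSSV(\underline o)$ does not follow. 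Two smaller omissions: you never verify the simplicity clause of $\WSSV$ (no loops or multiple edges), which corresponds to excluding $2$-gons and $4$-cycles with two opposite hourglasses in $W$; and the appeal to \Cref{prop:well_oriented} is unnecessary, since once the local dictionary and the move intertwining are in place the two move-closed definitions match directly.
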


\begin{figure}[h]
    \centering
    \includegraphics[width=0.35\linewidth]{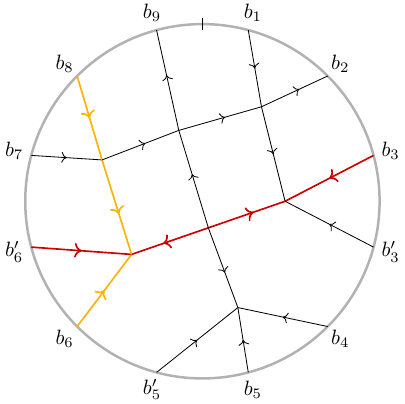}
    \caption{The symmetrized six-vertex configuration corresponding to the hourglass plabic graph from \Cref{fig:web}, with some $\trip_2$-strands highlighted.}
    \label{fig:web_ssv}
\end{figure}

\section{Clasped webs}
\label{sec:clasped-webs}

\subsection{Clasp sequences}
In this subsection, we set up the main definitions of clasp sequences and clasped webs. Fix boundary conditions $\underline a = (a_1,\dots, a_n)$.

\begin{definition}
    A \emph{clasp sequence} $\underline C = (\underline c_1, \dots, \underline c_m)$ on $\underline a$ is a partition of $[n]$ into $m$ disjoint intervals 
    $[n] = I_1\sqcup \cdots \sqcup I_m,$ with \emph{$i$-th clasp} $\underline c_i = (a_j)_{j\in I_i}\in [3]^{|I_i|}$. We will also refer to the set of boundary vertices $\{b_j\ |\ j\in I_i\}$ as the $i$-th clasp, with $\underline c_i$ recording the tuple of boundary conditions of the vertices in this clasp. The \emph{weight} of the clasp $\underline c_i$ is defined to be 
    \[\weight(\underline c_i) = \sum_{j\in I_i}\omega_{a_j}\in \Lambda^+.\]
\end{definition}

We picture a web $W\in \CRG(\underline a)$ along with a clasp sequence $\underline C$ by drawing external ``clasps" around corresponding boundary vertices, and call this a \emph{clasped web}.

\begin{figure}[h]
    \centering
    \includegraphics[width=0.35\linewidth]{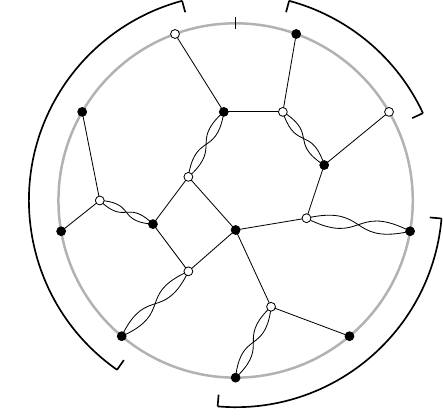}
    \caption{A clasped web with clasp sequence $\underline C = (\underline c_1, \underline c_2, \underline c_3)$, where $\underline c_1 = (1, 3), \underline c_2 = (2, 1, 2), \underline c_3 = (2, 1, 1, 3)$.}
    \label{fig:clasped_web}
\end{figure}

\begin{remark}\label{rem:flip}
    If $b_i$ is a boundary vertex of $W$ of type $2$, we may perform the transformation shown in \Cref{fig:flip} to \emph{flip} the color of $b_i$. This corresponds to the isomorphism of $G$-representations $\wedge^2 (V^\ast) \simeq \wedge^2(V)$. We call this transformation $\flip_i$. It is clear that $\flip_i$ is an involution that preserves non-convexity. Henceforth, we assume that all boundary vertices of type $2$ are colored black, unless otherwise mentioned.
\end{remark}
\begin{figure}[h]
        \centering
        \includegraphics[height=3cm]{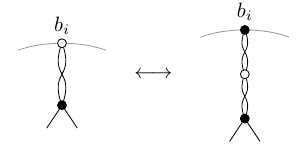}
        \caption{The flip transformation.}
        \label{fig:flip}
\end{figure}

\begin{definition}
    A clasp sequence $\underline C = (\underline c_1, \dots, \underline c_m)$ on $\underline a$ is \emph{sorted} if each $\underline c_i$ is a weakly increasing tuple.
\end{definition}

In other words, each clasp of a sorted clasp sequence first contains all type $1$ vertices, then all type $2$ vertices, and finally all type $3$ vertices, when read clockwise. If we assume that all type $2$ vertices are black (see \Cref{rem:flip}), then every clasp in a sorted clasp sequence looks like \Cref{fig:sorted} when read clockwise.

\begin{figure}[h]
    \centering
    \includegraphics[height = 1.5cm]{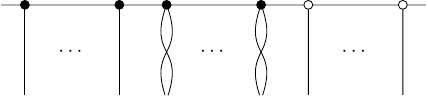}
    \caption{A sorted clasp.}
    \label{fig:sorted}
\end{figure}

\begin{remark}
    It is clear from the definition that if $\underline C = (\underline c_1, \dots, \underline c_m)$ is a clasp sequence on $\underline a$, then $\L(W) = L_1\cdots L_m$ with $\type(L_i) = \underline c_i$. If each $\underline c_i$ is a \emph{sorted} clasp, then each $L_i$ is a \emph{sorted} word (see \Cref{def:sortedword}).
\end{remark}

A clasp sequence $\underline C = (\underline c_1,\dots, \underline c_m)$ on $\underline a$ induces a (unique up to scaling) inclusion of $G$-representations:
\[\bigotimes_{i=1}^m V(\lambda_i)\xhookrightarrow{}V^{\otimes \underline a},\]
where $\lambda_i = \weight(\underline c_i)$. This inclusion induces a surjection on invariant spaces:
\[\pi_{\underline C}: \inv_G(V^{\otimes \underline a})\to \inv_G\left(\bigotimes_{i=1}^m V(\lambda_i)\right).\]

If we think of clasped webs as giving elements of the target invariant space, then we are essentially trying to describe all relations between the clasped webs. Since unclasped webs form a basis for $\inv_G(V^{\otimes\underline a})$, this is equivalent to understanding $\ker\pi_{\underline C}$.

\subsection{Cut paths and non-convexity}

We extend \cite{Kuperberg} in defining \emph{non-convexity} for webs. Let $\underline C = (\underline c_1, \dots, \underline c_m)$ be a clasp sequence on a fully reduced hourglass plabic graph $W$. A \emph{$\underline c_i$-cut path} $\gamma$ is an arc in the disk which separates the $i$-th clasp from the rest and is transverse to all edges of $W$ that it intersects. We define the weight of $\gamma$ as follows:

\begin{enumerate}[leftmargin=*]
    \item $r = 2$: $\weight(\gamma) = n\omega$ where $n$ is the number of edges of $W$ that $\gamma$ intersects.
    \item $r = 3$: $\weight(\gamma) = n_1\omega_1 + n_2\omega_2$ where $n_1$ (resp. $n_2$) is the number of edges of $e$ of $W$ that $\gamma$ intersects such that the black vertex (resp. white vertex) of $e$ is on the same side of $\gamma$ as the clasp.
    \item $r=4$: $\weight(\gamma) = n_1\omega_1 + n_2\omega_2 + n_3\omega_3$ where $n_1$ (resp. $n_3$) is the number of simple edges of $e$ of $W$ that $\gamma$ intersects such that the black vertex (resp. white vertex) of $e$ is on the same side of $\gamma$ as the clasp, and $n_2$ is the number of hourglass edges of $W$ that $\gamma$ intersects.
\end{enumerate}

\begin{definition}\label{def:nonconvex}
    A fully reduced hourglass plabic graph $W$ with a clasp sequence $\underline C = (\underline c_1,\dots, \underline c_m)$ is \emph{non-convex} (with respect to $\underline C$) if for each $i\in [m]$ and every $\underline c_i$-cut path $\gamma$, $\weight(\gamma)\ge \weight(\underline c_i)$ in the dominance ordering on $\Lambda^+$. We say that $W$ is \emph{partially convex} if it is not non-convex.
\end{definition}

To check non-convexity of a given fully reduced hourglass plabic graph, it suffices to consider \emph{minimal $\underline c_i$-cut paths} for each $i$, by which we mean $\underline c_i$-cut paths whose weight is minimal among the set of $\underline c_i$-cut paths.

\begin{remark}
    If $\alpha_1, \alpha_2, \alpha_3$ are the simple roots of $G$, then 
    \begin{align*}
        \alpha_1 &= 2\omega_1 - \omega_2,\\
        \alpha_2 &= -\omega_1 + 2\omega_2 - \omega_3,\\
        \alpha_3 &= -\omega_2+2\omega_3.
    \end{align*}
    As a consequence, \emph{minimal} cut paths can intersect \emph{at most one edge} incident to an internal vertex of simple degree $3$. To see this, let $v$ be an internal vertex of simple degree $3$, and let $\gamma$ be a cut path intersecting two edges incident to $v$. Then we can construct a new cut path $\gamma'$ by sliding $\gamma$ past $v$ to intersect only the other edge incident to $v$. Checking a few cases, we see $\weight(\gamma')\le \weight(\gamma)$, so $\gamma$ cannot be minimal.
\end{remark}

We first show that non-convexity is invariant under moves of an hourglass plabic graph.

\begin{lemma}\label{lem:nonconvex}
    Let $\underline C = (\underline c_1, \dots, \underline c_m)$ be a clasp sequence on $\underline a$. If $W, W'$ are move-equivalent fully reduced hourglass plabic graphs, then $W$ is non-convex if and only if $W'$ is non-convex.
\end{lemma}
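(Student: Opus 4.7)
The plan is to reduce the move equivalence $W \sim W'$ to a single move by induction on the length of a defining move sequence, and then handle each of the three move types (contraction, square, benzene) separately. For a single move transforming $W$ into $W'$, let $R \subset D$ denote the compact region where the move takes place, so that $W$ and $W'$ agree on $D \setminus R$. Since move equivalence is symmetric, it suffices to show one direction, say that non-convexity of $W$ implies non-convexity of $W'$. For this, I would establish the sufficient condition that every minimal $\underline c_i$-cut path $\gamma'$ in $W'$ admits a $\underline c_i$-cut path $\gamma$ in $W$ with $\weight(\gamma) \le \weight(\gamma')$; then $\weight(\gamma') \ge \weight(\gamma) \ge \weight(\underline c_i)$ by non-convexity of $W$. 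The restriction to minimal $\gamma'$ is justified by the paragraph following \Cref{def:nonconvex}, and by the remark preceding the lemma such a $\gamma'$ crosses at most one edge at every internal vertex of simple degree $3$.

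The construction of $\gamma$ is geometric. Since cut paths are just transverse arcs embedded in the disk, I may take $\gamma$ to coincide with $\gamma'$ on $D \setminus R$ and only re-route inside $R$. The entry and exit points of $\gamma'$ on $\partial R$ determine the topological endpoints of the required re-routing, and the edges of $W$ that $\gamma$ crosses outside $R$ (which coincide with those of $W'$ crossed by $\gamma'$) contribute the same weight to both cut paths. Thus the problem reduces to a bounded, local comparison: for each admissible pattern in which a minimal $\gamma'$ can traverse $R$ in $W'$, exhibit a traversal of $R$ in $W$ connecting the same boundary points and yielding no greater weight contribution.

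For contraction moves (\Cref{fig:contraction}) this is immediate: $R$ contains only a simple edge or a pair of adjacent hourglass edges meeting at a bivalent vertex, and the matching preserves the weight contribution exactly. For square moves (\Cref{fig:square}), $R$ contains a $4$-cycle with four outward-going edges; the minimality constraint cuts the admissible local configurations of $\gamma' \cap R$ down to a short enumerable list, and in each case the required re-routing in the $W$-side $4$-cycle can be read off and the weight contributions are seen to agree.

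The main obstacle is the benzene move (\Cref{fig:benzene}), where $R$ is a hexagonal face whose boundary alternates between three simple and three hourglass edges, and the move interchanges these two classes of positions. Unlike contraction and square moves, the benzene move does \emph{not} preserve the web invariant $[W]$, so one cannot shortcut the local analysis by appealing to any global algebraic identity; instead, I would enumerate the rotationally distinct patterns of $\gamma' \cap R$ (again constrained by the minimality principle from the preceding remark) and verify in each case that the re-routed path through the other hexagonal configuration has equal or smaller weight. The rotational symmetry of the two benzene configurations together with the symmetric roles of black and white endpoints of simple edges in the definition of $\weight$ should make these case checks match up uniformly, completing the argument.
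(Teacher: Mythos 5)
Your overall architecture matches the paper's: reduce to a single move, restrict to minimal cut paths, and compare local traversals of the region where the move happens. The square and benzene cases go through essentially as you describe (with the caveat that for the benzene move the correct comparison path sometimes exits the hexagonal face and crosses the two external simple edges at the vertices of the skipped edge, so your region $R$ must be taken large enough to include those edges; the within-hexagon re-routing alone does not always have smaller weight, since two hourglass positions contribute $2\omega_2 = \omega_1+\omega_3+\alpha_2 > \omega_1+\omega_3$).

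The genuine gap is the contraction case, which you dismiss as ``immediate'' with exact weight preservation. It is not. Let $W$ contain $u$--$v$--$w$ with both edges hourglasses and let $W'$ be the contraction to a single simple-degree-$4$ vertex. A minimal $\underline c_i$-cut path $\gamma$ in $W$ may cross one of the internal hourglass edges, contributing $\omega_2$ locally; such a path can be genuinely minimal (sliding it past $u$ or past $w$ replaces $\omega_2$ by $2\omega_1=\omega_2+\alpha_1$ or $2\omega_3=\omega_2+\alpha_3$, both strictly larger in dominance order). Consequently there is \emph{no} cut path $\gamma'$ in $W'$ with $\weight(\gamma')\le\weight(\gamma)$: every traversal of the contracted vertex separating $u$'s side from $w$'s side must cross two simple edges of the same color and costs $2\omega_1$ or $2\omega_3$. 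So your sufficient condition fails in exactly the direction ``non-convexity of the contracted graph implies non-convexity of the uncontracted one,'' and appealing to symmetry of move equivalence does not help, since contraction and uncontraction have genuinely asymmetric local pictures. The paper's fix is a root-system argument rather than a single-path comparison: both slid paths $\gamma_1,\gamma_2$ correspond to cut paths of $W'$, so non-convexity of $W'$ gives $\weight(\gamma)+\alpha_1\ge\weight(\underline c_i)$ and $\weight(\gamma)+\alpha_3\ge\weight(\underline c_i)$ simultaneously, and writing $\weight(\gamma)-\weight(\underline c_i)$ in the simple-root basis shows these two inequalities force $\weight(\gamma)\ge\weight(\underline c_i)$. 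Your proposal is missing this idea, and without it the contraction step does not close.
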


\begin{proof}
    It suffices to show that (un)contraction moves, square moves, and benzene moves preserve non-convexity. Moreover, it suffices to consider minimal cut paths for each clasp.

    Suppose $u,v,w$ are vertices of $W$ such that $v$ is connected to both $u$ and $w$ via hourglass edges, and let $W'$ be the web obtained by contracting these two hourglass edges and collapsing $u,v,w$ to the single vertex $v$. Each cut path $\gamma'$ in $W'$ gives a corresponding cut path $\gamma$ in $W$ of the same weight, and hence non-convexity of $W$ implies that of $W'$. We wish to show that if $W'$ is non-convex, then so is $W$.
    
    If $v$ does not have simple degree $4$ in $W'$, then it is easy to see that every cut path in $W$ corresponds to one for $W'$ of the same weight, so we are done. The only non-trivial case is when $v$ has simple degree $4$ in $W'$.

    If $\gamma$ is a minimal $\underline c_i$-cut path in $W$ that does not intersect either of the hourglass edges joining, $v$ to $u$ or $w$, it once again corresponds to some cut path $\gamma'$ in $W'$ of the same weight and we are done. Hence, assume $\gamma$ intersects one of the hourglass edges. Sliding $\gamma$ past $u$ gives a new cut path $\gamma_1$, and similarly sliding past $w$ gives a new cut path $\gamma_2$. These paths correspond to cut paths $\gamma_1', \gamma_2'$ in $W'$ such that $\weight(\gamma_1) = \weight(\gamma_1')\ge \weight(\underline c_i), \weight(\gamma_2) = \weight(\gamma_2')\ge \weight(\underline c_i)$ by non-convexity of $W'$. But $\{\weight(\gamma_1),\weight(\gamma_2)\} = \{\weight(\gamma)+\alpha_1, \weight(\gamma) + \alpha_3\}$, and hence $\weight(\gamma) + \alpha_1\ge \weight(\underline c_i), \weight(\gamma)+ \alpha_3\ge \weight(\underline c_i)$. Rearranging gives $\weight(\gamma)-\weight(\underline c_i)\ge -\alpha_1, \weight(\gamma)-\weight(\underline c_i)\ge -\alpha_3$, and this is only possible if $\weight(\gamma)-\weight(\underline c_i)\ge 0$. Thus, $W$ is also non-convex. This shows that (un)contraction moves preserve non-convexity.
    \begin{figure}[h]
        \centering
        \includegraphics[width=0.4\linewidth]{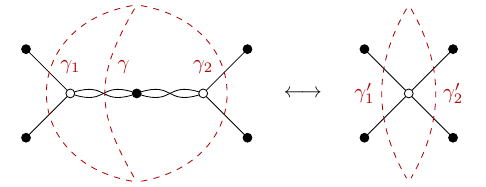}
        \label{fig:contraction_cutpaths}
    \end{figure}

    Next, suppose $F_0$ is a benzene face of $W$ and let $W'$ be obtained from $W$ via a benzene move at $F_0$. We wish to show that if $W$ is non-convex, then so is $W'$. Let $\gamma$ be a minimal $\underline c_i$-cut path in $W'$ and assume that $\gamma$ passes through $F_0$. We wish to show that $\weight(\gamma)\ge \weight(\underline c_i)$. Minimality of $\gamma$ lets us reduce to the case where $\gamma$ intersects exactly two edges of $F_0$.
    \begin{itemize}[leftmargin=*]
        \item If $\gamma$ intersects two adjacent edges of $F_0$, then we may ``slide" $\gamma$ past their common vertex to obtain a cut path of strictly smaller weight, contradicting minimality.

        \item If $\gamma$ intersects two edges of $F_0$ separated by an edge, then the contribution of these edges to the weight is either $\omega_1+\omega_3$ or $2\omega_2$, depending on whether these edges are simple or hourglasses, respectively. Comparing with the path $\gamma'$ obtained once again by ``sliding" $\gamma$ outside $F_0$, the two new intersections of $\gamma'$ contribute $\omega_1+\omega_3$. Therefore, $\weight(\gamma)\ge \weight(\gamma')$. Since $\gamma'$ has the same weight in $W$ and $W'$ (as it does not interact with the benzene face), non-convexity of $W$ implies that $\weight(\gamma) = \weight(\gamma')\ge \weight(\underline c_i)$.
        \item If $\gamma$ intersects two opposite edges of $F_0$, then one of these is a simple edge and the other is an hourglass edge. The same is true for $\gamma$ in $W$, so $\weight(\gamma)$ is the same whether calculated in $W$ or in $W'$. Non-convexity of $W$ then implies that $\weight(\gamma) \ge \weight(\underline c_i)$.
    \end{itemize}
    \begin{figure}[h]
        \centering
        \includegraphics[width=0.8\linewidth]{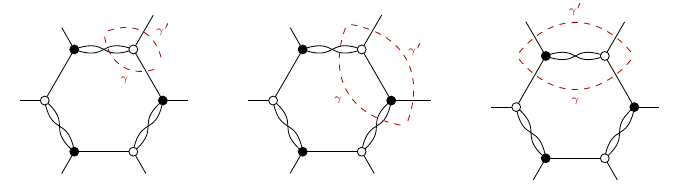}
        \label{fig:benzene_cutpaths}
    \end{figure}

    Lastly, suppose $F_0$ is a square face of $W$ and $W'$ is obtained from $W$ by applying a square move at $F_0$. By performing uncontraction moves if necessary, we may assume that the boundary vertices of the square are connected to hourglass edges in both $W$ and $W'$. We wish to show that if $W$ is non-convex and $\gamma$ is a $\underline c_i$-cut path in $W'$, then $\weight(\gamma)\ge \weight(\underline c_i)$. 
    
    Just as in the benzene face case, we need only consider minimal cut paths $\gamma$ which pass through $F_0$ in $W'$. If $\gamma$ intersects two adjacent edges of $F_0$, we may slide $\gamma$ past their intersection to obtain a cut path of strictly smaller weight, contradicting minimality. Thus $\gamma$ intersects two opposite edges of $F_0$ in $W'$, which contribute a weight of $\omega_1+\omega_3$. This gives a corresponding cut path in $W$ of the same weight as $\gamma$, and the non-convexity of $W$ then implies that $\weight(\gamma)\ge \weight(\underline c_i)$.
\end{proof}

The notion of non-convexity is useful in studying the kernel of the projection $\pi_{\underline C}$.

\begin{proposition}\label{prop:nonconvex}
    Suppose $W\in \CRG(\underline a)$ is partially convex with respect to $\underline C = (\underline c_1,\dots, \underline c_m)$. Then $[W]\in \ker\pi_{\underline C}$.
\end{proposition}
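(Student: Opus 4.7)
The plan is to use a cut path $\gamma$ witnessing partial convexity as a ``channel'' through which $[W]$ factors, and to observe that the intermediate $G$-representation is too small, in a weight-theoretic sense, to receive a nonzero map from $V(\lambda_i)$. Partial convexity supplies some index $i \in [m]$ and some $\underline c_i$-cut path $\gamma$ with $\weight(\gamma) \not\geq \lambda_i$, where $\lambda_i = \weight(\underline c_i)$. Since $\bigotimes_{j=1}^m V(\lambda_j)$ sits inside $V(\lambda_i) \otimes \bigotimes_{j \neq i} V^{\otimes \underline c_j} \subset V^{\otimes \underline a}$, it suffices to show that $[W]$ vanishes on this larger subspace.

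I would next formalize the cut-path factorization of $[W]$. The arc $\gamma$ partitions the disk into two regions and partitions $W$ into subwebs $W_1$ (containing the $i$-th clasp) and $W_2$, interfacing along the set of edges of $W$ crossed by $\gamma$. Reading $W_1$ as a $G$-equivariant morphism with inputs at the $i$-th clasp and outputs at the cut edges gives a map $W_1 \colon V^{\otimes \underline c_i} \to U_\gamma$, where $U_\gamma = \bigotimes_e V(\omega_{k_e})$ is taken over the cut edges $e$: one sets $k_e = 1$ if $e$ is a simple edge with its black endpoint on the clasp side, $k_e = 3$ if $e$ is simple with its white endpoint on the clasp side, and $k_e = 2$ if $e$ is an hourglass edge. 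This rule is engineered to match the definition of $\weight(\gamma)$, so the highest weight of $U_\gamma$ equals $\sum_e \omega_{k_e} = \weight(\gamma)$. Interpreting $W_2$ dually yields a morphism $U_\gamma \otimes V^{\otimes \underline a'} \to \mathbb{C}$ with $\underline a'$ the remaining boundary conditions, and these assemble to $[W] = W_2 \circ (W_1 \otimes \mathrm{id})$.

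The representation-theoretic crux is then immediate: every weight of $U_\gamma$ is obtained from $\weight(\gamma)$ by subtracting a nonnegative combination of positive roots, hence is $\leq \weight(\gamma)$ in the dominance order. Were $V(\lambda_i)$ to appear as a $G$-summand of $U_\gamma$, the weight $\lambda_i$ would appear in $U_\gamma$, forcing $\weight(\gamma) \geq \lambda_i$ and contradicting our hypothesis. Schur's lemma then forces any $G$-equivariant map $V(\lambda_i) \to U_\gamma$ to vanish, so the composite $V(\lambda_i) \hookrightarrow V^{\otimes \underline c_i} \xrightarrow{W_1} U_\gamma$ is zero; composing with $W_2$ yields $[W]|_{V(\lambda_i) \otimes \bigotimes_{j \neq i} V^{\otimes \underline c_j}} = 0$, and therefore $\pi_{\underline C}([W]) = 0$.

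The principal technical obstacle I anticipate is not the weight theory but the clean verification of the cut-path factorization $[W] = W_2 \circ (W_1 \otimes \mathrm{id})$ with intermediate representation $U_\gamma$ as described. This amounts to translating the proper-labeling formula for $[W]$ across $\gamma$ and confirming that the edge-type/side convention appearing in the definition of $\weight(\gamma)$ aligns with the duality between black and white vertices and between simple and hourglass edges; once that bookkeeping is set up, the weight-theoretic step concludes at once.
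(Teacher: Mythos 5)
Your proposal is correct and follows essentially the same route as the paper: cut $W$ along $\gamma$ to factor $[W]$ through the intermediate representation $V_\gamma = U_\gamma$ whose highest weight is $\weight(\gamma)$, then note that $\lambda_i \nleq \weight(\gamma)$ precludes $V(\lambda_i)$ from appearing as a summand, so the restriction of $[W]$ to $V(\lambda_i)\otimes\bigotimes_{j\neq i}V^{\otimes \underline c_j}$ vanishes. The factorization step you flag as the technical obstacle is indeed the standard cut-and-compose property of web invariants and is asserted without further elaboration in the paper as well.
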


\begin{proof}
    Since $W$ is partially convex, there exists $1\le i\le m$ and a cut path $\gamma$ separating the $i$-th clasp from the rest such that $\weight(\gamma)\ngeq \weight(\underline c_i) = \lambda_i$. For simplicity, let $M = \bigotimes_{1\le j\le m, j\ne i}V^{\otimes \underline c_j}$. Then $V^{\otimes \underline a}\simeq V^{\otimes \underline c_i}\otimes M$, so that
    \[\inv_G(V^{\otimes \underline a})\simeq \hom_G\left(V^{\otimes \underline c_i}, M^*\right).\]
    Hence $[W]$ can be thought of as a $G$-invariant map from $V^{\otimes \underline c_i}$ to $M^*$. 

    Let $n_1, n_2, n_3$ be as in the definition of $\weight(\gamma)$, and let 
    \[V_\gamma = V(\omega_1)^{\otimes n_1}\otimes V(\omega_2)^{\otimes n_2}\otimes V(\omega_3)^{\otimes n_3}.\] 
    Then $\gamma$ splits $W$ into two webs $W_1, W_2$ (as in \Cref{fig:web_cut}) such that $[W] = [W_2]\o[W_1]$ with $[W_1]\in \hom_G(V^{\otimes \underline c_i}, V_\gamma)$ and $[W_2]\in\hom_G(V_\gamma, M^*)$.

    We consider the restriction of $[W_1]$ to the irreducible component $V(\lambda_i)\subset V^{\otimes\underline c_i}$. Every irreducible component $V(\mu)\subset V_\gamma$ has highest weight $\mu\le n_1\omega_1 + n_2\omega_2 + n_3\omega_3 =\weight(\gamma)$. By hypothesis, $\lambda_i\nleq \weight(\gamma)$ so $V(\lambda_i)$ does not appear as a irreducible factor of $V_\gamma$. This implies that $[W_1]\Big|_{V(\lambda_i)} = 0$, and hence $[W]\Big|_{V(\lambda_i)} = 0$. In other words, $[W]\in \ker\pi_{\underline C}$.

    \begin{figure*}[h]
        \centering
        \includegraphics[height = 5cm]{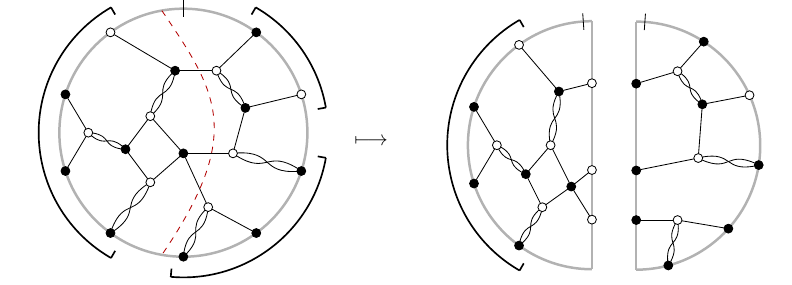}
        \caption{A cut path splitting a web into two.}
        \label{fig:web_cut}
    \end{figure*}
\end{proof}

\subsection{Bad local configurations}

\Cref{fig:bad_configs} gives a list of local boundary configurations which are partially convex, along with the cut path witnessing this partial convexity. We say that $W\in \CRG(\underline a)$ has a \emph{bad local boundary configuration within a clasp} if it contains one of the configurations from \Cref{fig:bad_configs} as a subgraph, such that both boundary vertices belong to the same clasp.

\begin{figure}[h]
\centering
\includegraphics[width=15cm]{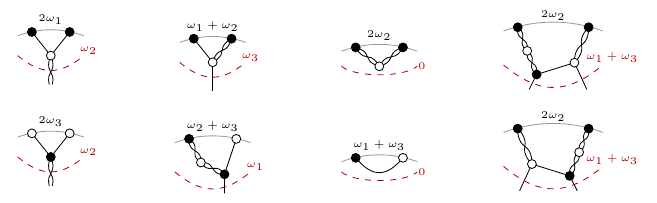}
\caption{The partially convex local boundary configurations, with partial convexity witnessed by the dashed arc.}
\label{fig:bad_configs}
\end{figure}

\section{Descents and bad local boundary configurations}
\label{sec:descents}

We wish to characterize the non-convexity of a clasped web $W$ in terms of of its lattice word $\L(W)$. For this purpose, we introduce the notion of \emph{descents}.

\begin{definition}\label{def:descent}
A sorted word $L = w_1\cdots w_n$ has a \emph{descent} at position $i$ if one of the following holds:
\begin{enumerate}
    \item $w_i = a\in [r], w_{i+1} = b\in [r]$, such that $a < b$.
    \item $w_i = \overline a, w_{i+1} = \overline b$, such that $a>b$.
    \item $w_i = 1, w_{i+1} = \overline 1$.
    \item $w_i = a\in [r], w_{i+1} = \{b,c\}\in {[r]\choose 2}$, such that $a< b< c$.
    \item $w_i = \{a,b\}, w_{i+1} = \overline c$, such that $c < \min\{[r]\setminus \{a,b\}\}$.
    \item $w_i = \{a,b\}, w_{i+1} = \{c,d\}$, such that $a< c$ or $b < d$.
\end{enumerate}
If $L$ has a descent at position $i$ we write $w_i \ngeq w_{i+1}$, and if not we write $w_i\ge w_{i+1}$.
\end{definition}

\begin{remark}
    Treating barred letters as elements of ${[r]\choose 3}$ (by taking complements) makes the notion of descent a bit more transparent. If $A \in {[r]\choose i}$ and $B \in {[r]\choose j}$ with $i\le j$, then $A\ge B$ if and only if the $k$-th element of $A$ is at least as large as the $k$-th element of $B$ for all $k\le i$ (cf. \emph{Gale} order).
\end{remark}

We are now able to define a condition on lattice words which we will later show to characterize non-convexity.

\begin{definition}\label{def:latticeword}
    Given a sequence of sorted clasps $\underline C = (\underline c_1,\dots, \underline c_m)$, a lattice word $L$ has no \emph{$\underline C$-descents} if $L = L_1\cdots L_m$ with $\type(L_i) = \underline c_i, \forall i$ and each $L_i$ has no descents.
    The set of all lattice words with no $\underline C$-descents is denoted $\mathsf L(\underline C)$, and the set of all balanced lattice words with no $\underline C$-descents is denoted $\BL(\underline C)$.
\end{definition}

Our immediate goal is to show that each descent in the lattice word $\L(W)$ gives rise to a bad local boundary configuration in $W$. As a corollary, $W$ being non-convex will imply that $\L(W)$ has no $\underline C$-descents. The main tool we use in this section is the growth algorithm of \cite{GPPSS-sl4}, which gives a way of reconstructing an element of $\CRG(\underline a)\slash\sim$ from a balanced lattice word $L\in \BL(\underline a)$.

\begin{algorithm} (Growth algorithm) \label{algo:growth} Let $L\in \BL(\underline a)$.
\begin{enumerate}
    \item Oscillize to obtain $\osc(L)$. Draw a horizontal line with downward dangling strands labelled by letters of $\osc(L)$, oriented upwards for barred letters and downwards for unbarred letters.
    \item As long as any dangling strands remain, apply the growth rules from \Cref{fig:growth_rules} to obtain a symmetrized six-vertex configuration.
    \item Convert to an hourglass plabic graph and de-oscillize.
\end{enumerate}
\end{algorithm}

\begin{figure}
    \centering
    \includegraphics[width=\linewidth]{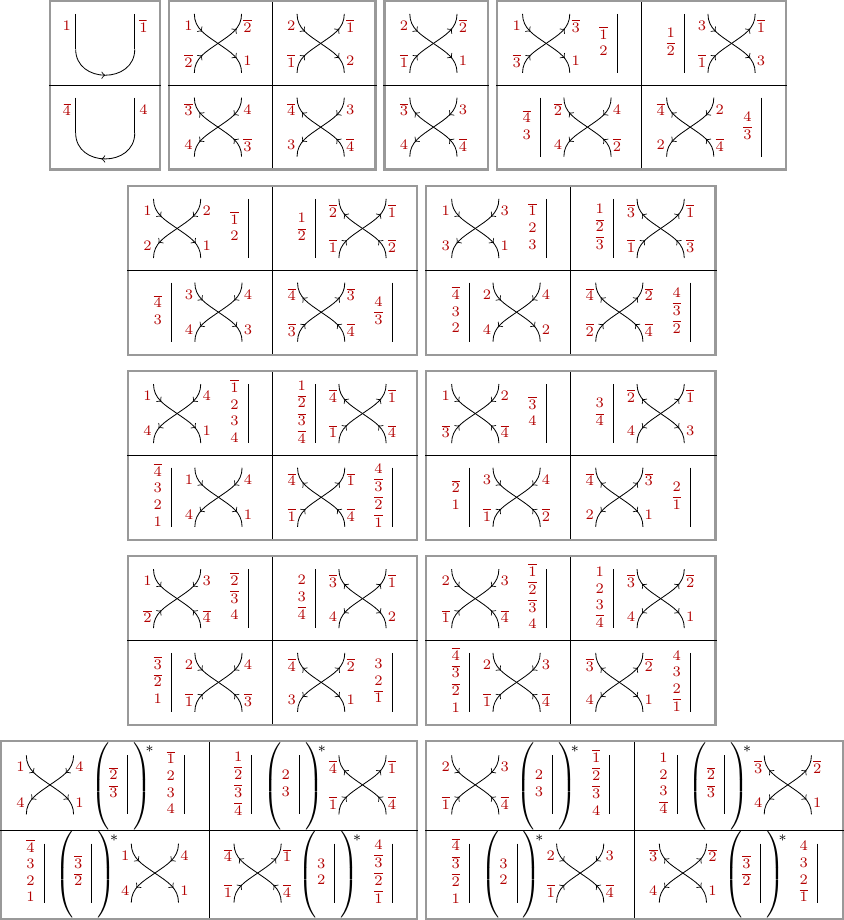}
    \caption{Growth rules for $\SL_4$ webs. A vertical line indicates that a witness with one of the labels must be present. Vertical lines with a $*$ indicate that any number of such witnesses (including zero) can be present.}
    \label{fig:growth_rules}
\end{figure}

The following theorem is the main result of \cite[\S 5]{GPPSS-sl4}.

\begin{thm}\label{thm:growth}
    Given $L\in \BL(\underline a)$, the growth algorithm always terminates in finitely many steps, resulting in a fully reduced hourglass plabic graph $W$. The move-equivalence class $W$ is independent of the order in which growth rules are applied, and thus the growth algorithm gives a well defined map
    \[\mathcal G:\BL(\underline a)\to \CRG(\underline a)\slash\sim.\]
    Moreover, $G$ is a bijection with inverse $\CRG(\underline a)\slash\sim\ \to \BL(\underline a)$ given by $W\mapsto \mathcal L(W)$.
\end{thm}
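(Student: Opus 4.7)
The plan is to separate the theorem into three subclaims: (a) termination, producing a fully reduced hourglass plabic graph; (b) confluence of the growth procedure up to move-equivalence; and (c) bijectivity with inverse $W \mapsto \mathcal L(W)$.

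For (a), I would first establish termination by defining a complexity measure---for instance, summing over the dangling strands a contribution that decreases strictly under every growth rule of \Cref{fig:growth_rules}. Each rule resolves at least one pair of adjacent dangling strand-endpoints into a local six-vertex piece, and a finite inspection of the rules shows such a measure is monotone decreasing, so the algorithm halts in a symmetrized six-vertex configuration $D$ of type $\osc(\underline a)$. To see that $D$ is well-oriented (equivalently, that $W = \varphi^{-1}(D)$ is fully reduced), I would verify properties (P1)--(P3) of \Cref{prop:well_oriented} directly. The crucial observation is that the $\trip_2$-strands of $D$ are the downward continuations of the original dangling strands through the growth vertices; since growth only proceeds downward, these strands are monotone arcs in the disk, immediately yielding (P1). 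A similar geometric argument on the vertical nesting of monotone arcs handles (P2) and (P3). The lattice-word hypothesis enters here as the combinatorial condition ensuring that an applicable rule always exists and no dangling strand gets stuck.

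The confluence step (b) is where I expect the main difficulty to lie. I would establish local confluence by a case analysis: when two applicable growth rules act on disjoint regions they commute on the nose; when the regions of applicability overlap, I would enumerate the overlap patterns and verify that the two single-rule applications can be completed to configurations related by a sequence of ASM moves and Yang--Baxter moves, which under $\varphi$ correspond to square and benzene moves and hence preserve the $\sim$-class. Combined with the termination from (a), Newman's diamond lemma then upgrades local confluence to global confluence, so any two run orders produce $\sim$-equivalent outputs, making $\mathcal G$ well-defined. Enumerating and dispatching the overlapping cases in \Cref{fig:growth_rules} rigorously is the technically heaviest part of the argument.

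Finally, for (c), I would verify that $\mathcal G$ and $W\mapsto \mathcal L(W)$ are mutually inverse. In one direction I would argue by induction on the number of growth steps that the partial separation labeling of the growing configuration is consistent with the input word, yielding $\mathcal L(\mathcal G(L)) = L$. In the other direction, given $W \in \CRG(\underline a)$, I would produce a specific growth order that peels off $W$ from the top down, at each stage choosing the rule compatible with the topmost letters of $\mathcal L(W)$ and the local structure of $W$; confluence from (b) then gives $\mathcal G(\mathcal L(W)) \sim W$. Since $W\mapsto \mathcal L(W)$ descends to $\sim$-classes (the separation labeling being preserved by contraction and square moves, and benzene moves permuting representatives with the same boundary data), we obtain mutually inverse bijections between $\BL(\underline a)$ and $\CRG(\underline a)\slash\sim$, completing the proof.
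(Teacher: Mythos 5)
This theorem is not proved in the paper at all: it is recalled verbatim as ``the main result of \cite[\S~5]{GPPSS-sl4}'' and used as a black box, so there is no in-paper argument to compare yours against. Judged as a standalone proof, your outline follows the natural abstract-rewriting strategy (termination, then local confluence plus Newman's lemma, then checking the two maps are mutually inverse), which is a legitimate skeleton, but each of the three legs has a genuine gap as written.

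The most concrete problem is termination. Your claim that ``each rule resolves at least one pair of adjacent dangling strand-endpoints'' is false for the rules in \Cref{fig:growth_rules}: rules such as $13\to 31$, $13\to\overline{2}\,\overline{4}$, $24\to 42$, and $24\to\overline{1}\,\overline{3}$ rewrite two dangling strands into two dangling strands, and moreover only fire when suitable witnesses are present elsewhere in the word. So no measure of the naive form you describe is strictly decreasing, and a configuration can be locally stuck until activity far away produces the required witness --- exactly the phenomenon the paper wrestles with in the $\{1,2\}\{1,3\}$ case of the proof of \Cref{prop:descent}, where termination is invoked as a nontrivial fact. A correct potential function has to account for letter values, not just strand count. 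Separately, for full reducedness your appeal to monotonicity is too quick: two downward-monotone arcs in the disk can still cross twice, so (P1) does not follow from monotonicity alone but needs the lattice-word hypothesis and the structure of the rules. Finally, the local-confluence case analysis and the ``peel $W$ from the top down'' construction of a compatible growth order are precisely the technical core of \cite[\S~5]{GPPSS-sl4} and are deferred rather than carried out, so the proposal is a plausible plan rather than a proof.
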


We also record a useful lemma from \cite{GPPSS-sl4} about descents in the oscillating case:

\begin{lemma}\label{lem:descent}
    Let $L = w_1\cdots w_n\in \BL(\underline a)$, and $W = \mathcal G(L)$ with corresponding boundary vertices $b_1,\dots, b_n$. Suppose $|w_i|= |w_{i+1}|\in \{1, 3\}$. Then $w_i\ngeq w_{i+1}$ if and only if $b_i$ and $b_{i+1}$ are connected to the same vertex in $W$.
\end{lemma}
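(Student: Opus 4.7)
My plan is to prove the lemma by direct inspection of the growth algorithm (\Cref{algo:growth}), exploiting the order-independence of growth rules provided by \Cref{thm:growth}. Since the move-equivalence class of $W = \mathcal{G}(L)$ does not depend on the order in which rules are applied, I am free to apply a growth rule first at the pair of dangling strands in positions $i$ and $i{+}1$, and thereby read off the local incidence structure at $b_i, b_{i+1}$ directly from \Cref{fig:growth_rules}.

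Note first that the hypothesis $|w_i|=|w_{i+1}|\in\{1,3\}$ rules out case (3) of \Cref{def:descent}, so the relevant descent conditions reduce to case (1) (both $w_i,w_{i+1}\in[4]$ with $w_i<w_{i+1}$) and case (2) (both in $\overline{[4]}$ with the unbarred values strictly decreasing). In both cases the two strands have the same orientation in the growth algorithm: both point downward if $|w_i|=1$, or both upward if $|w_i|=3$. I would then identify among the rules of \Cref{fig:growth_rules} those which are applicable to two adjacent same-orientation strands and which collapse them to a single internal vertex. A direct check shows that this rule produces a common black vertex exactly when $w_i=a<b=w_{i+1}$ and a common white vertex exactly when $w_i=\overline{a}$, $w_{i+1}=\overline{b}$ with $a>b$. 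These are precisely the two descent cases, establishing the forward direction: if $w_i\ngeq w_{i+1}$, then $b_i$ and $b_{i+1}$ are incident to a common internal vertex in $\mathcal{G}(L)$, and this vertex survives all further growth steps.

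For the converse, I would argue that when $w_i\geq w_{i+1}$ no growth rule applied at positions $i,i{+}1$ can merge the two strands to a common endpoint: the only available rules either leave one of the strands in place (producing a new strand configuration in the same positions) or introduce witness vertices that lie strictly between $b_i$ and $b_{i+1}$ or strictly outside the pair. Once such a separating internal structure has been introduced, subsequent growth rules add new vertices only further below the horizontal line, so no later step can retroactively connect the simple edges incident to $b_i$ and $b_{i+1}$ to the same internal vertex.

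The main obstacle, as in \cite{GPPSS-sl4}, is the careful bookkeeping needed to check that no alternative ordering of growth steps — perhaps beginning with a rule at a neighboring pair $(i{-}1,i)$ or $(i{+}1,i{+}2)$ and involving witnesses — can achieve the merge at $(i,i{+}1)$ without a descent there. Here the order-independence half of \Cref{thm:growth} is doing the real work: it collapses what looks like a global search over growth sequences to the single local case analysis at $(i,i{+}1)$ described above.
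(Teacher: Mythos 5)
First, a point of comparison: the paper does not prove this lemma at all --- it is explicitly recorded as an imported result from \cite{GPPSS-sl4} --- so there is no in-paper argument to measure yours against. Judged on its own terms, your strategy (read the adjacency of $b_i$ and $b_{i+1}$ off the growth rules) is the natural one, but the forward direction has a genuine gap. You claim that the order-independence in \Cref{thm:growth} lets you ``apply a growth rule first at the pair of dangling strands in positions $i$ and $i+1$.'' Order-independence only says that \emph{whenever several rules are applicable}, the choice among them does not change the resulting move-class; it does not guarantee that any rule is applicable at your chosen pair. The rules in \Cref{fig:growth_rules} carry witness conditions, and for a descent pair such as $w_i=1$, $w_{i+1}=3$ neither candidate rule ($13\to 31$ or $13\to\overline{2}\,\overline{4}$) applies until a suitable witness is present elsewhere in the word. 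This is precisely the difficulty the paper wrestles with in the proof of \Cref{prop:descent} (the $\{1,2\}\{1,3\}$ and $\{2,4\}\{3,4\}$ cases, whose oscillizations contain the single-letter descent pairs $13$ and $24$): one must invoke termination of the algorithm to argue that rules applied \emph{elsewhere} eventually create the required witness, and only then can the merging rule at $(i,i+1)$ fire. Your closing paragraph names this obstacle but then asserts that \Cref{thm:growth} ``collapses'' it to a local check; it does not, and supplying that argument is the substantive content of the forward implication.

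The converse is also under-argued on two counts. Since $\mathcal G(L)$ is well defined only up to move-equivalence, the property ``$b_i$ and $b_{i+1}$ share a neighbor'' must first be shown to be invariant under square and benzene moves (the paper notes this check at the start of the proof of \Cref{prop:descent}); your observation that later growth steps only add vertices ``further below the horizontal line'' addresses one particular run of the algorithm, not the other representatives of the move class. And the claim that no rule in \Cref{fig:growth_rules} consumes an adjacent non-descent pair of equal-orientation strands is a finite verification against the actual rule set that you assert but do not carry out.
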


We use growth rules now to show that descents imply bad local boundary configurations:

\begin{proposition}
    \label{prop:descent}
    Let $W\in \CRG(\underline a)$ with adjacent boundary vertices $b_i, b_{i+1}$, and let $L = \mathcal L(W) = w_1\cdots w_n$. If $w_i\ngeq w_{i+1}$, then $W$ has a bad local boundary configuration between $b_i$ and $b_{i+1}$.
\end{proposition}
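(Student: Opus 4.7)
The plan is to argue by case analysis on the six descent types of \Cref{def:descent}. The main tools are the growth algorithm (\Cref{thm:growth}), which reconstructs the move-equivalence class of $W$ from $L=\mathcal{L}(W)$, and \Cref{lem:descent}, which already handles descents between two letters of equal size in $\{1,3\}$. Because the growth rules act locally on pairs of adjacent strands, the portion of $W$ incident to $b_i$ and $b_{i+1}$ is essentially determined by the two letters $w_i, w_{i+1}$.

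Types (1) and (2) of \Cref{def:descent} are exactly the hypotheses of \Cref{lem:descent}, which immediately gives that $b_i$ and $b_{i+1}$ share a common internal neighbor; such a shared-neighbor configuration is one of the bad boundary configurations pictured in \Cref{fig:bad_configs}. Type (3), where $w_i=1$ and $w_{i+1}=\overline{1}$, is read off from the appropriate growth rule in \Cref{fig:growth_rules}: an adjacent pair of oppositely oriented strands both labeled $1$ immediately close up into a simple edge joining $b_i$ and $b_{i+1}$, which is the U-turn bad configuration.

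For the hourglass cases (4), (5), (6), at least one of $w_i, w_{i+1}$ is a letter $\{x,y\}\in \binom{[4]}{2}$. The plan is to run the growth algorithm on the pair $w_i, w_{i+1}$ directly. The descent inequalities, expressed most cleanly in the Gale-order formulation of the remark after \Cref{def:descent}, restrict which growth rules can fire at $w_i,w_{i+1}$ and which witnesses of those rules can occur, forcing the boundary completion at $b_i, b_{i+1}$ to match one of the hourglass-containing bad configurations of \Cref{fig:bad_configs} (or its $r=3$ analog). In several subcases it is convenient to first oscillize, replacing the hourglass letter by an ordered pair of simple letters, and then invoke \Cref{lem:descent} on $\osc(W)$ before deoscillizing to exhibit the bad configuration in $W$.

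The main obstacle is the subcase-by-subcase verification in cases (4)--(6): one cannot always reduce to \Cref{lem:descent} by oscillization alone. For instance, a descent $w_i=\{1,2\}, w_{i+1}=\overline{1}$ only produces the sequence $1,2,\overline{1}$ in $\osc(L)$, and the internal $1,2$ descent merely records the existing hourglass edge at position $i$ rather than witnessing a new bad configuration in $\osc(W)$. In such subcases one must instead trace the applicable growth rules and their witness conditions directly, confirming that the constraints from $w_i\ngeq w_{i+1}$ force the required hourglass-containing bad configuration to appear at the boundary of $W$.
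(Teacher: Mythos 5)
Your overall strategy—case analysis over the six descent types, using \Cref{lem:descent} for the equal-size cases, reading off the $1\overline 1$ growth rule for type (3), and running the growth algorithm with oscillization for the hourglass cases—is the same as the paper's. Types (1)--(3) are handled correctly and completely. The problem is that for types (4)--(6) you only describe a plan, and the plan rests on a premise that fails exactly where the proof is hard: you assert that ``the portion of $W$ incident to $b_i$ and $b_{i+1}$ is essentially determined by the two letters $w_i,w_{i+1}$,'' but in the subcases $\{1,2\}\{1,3\}$ and $\{2,4\}\{3,4\}$ of type (6) \emph{no} growth rule is applicable to the oscillized pair $1213$ (resp.\ $2434$) on its own. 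Whether a rule fires depends on witnesses elsewhere in the word, and when no suitable witness is initially present one must argue that termination of the growth algorithm forces a witness ($\overline 1,2,3$ or $\overline 2,\overline 3,4$ following the $3$, in the $\{1,2\}\{1,3\}$ case) to eventually appear after growth steps occurring entirely outside the four strands in question. This global termination argument is the crux of case (6) and is absent from your proposal; ``tracing the applicable growth rules and their witness conditions directly'' does not by itself produce it, since there may be nothing to trace locally.

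Two smaller points. For type (5) you correctly diagnose that naive oscillization of $\{a,b\}\,\overline c$ only reproduces the hourglass at $b_i$ rather than a new adjacency, but you stop at the diagnosis; the paper resolves it cleanly by first applying $\flip_i$ to rewrite $w_i=\{a,b\}$ as $\{\overline d,\overline e\}$ with $\{d,e\}=[4]\setminus\{a,b\}$, after which the descent condition $c<d$ becomes a barred-letter descent $\overline d\,\overline c$ to which \Cref{lem:descent} applies in $\osc_i(W)$; de-oscillizing and flipping back exhibits the bad configuration. For type (6) in the fourteen remaining subcases, one does need to exhibit explicit growth-rule applications for each pair $\{a,b\}\{c,d\}$ (the paper tabulates all sixteen); these are finite checks, but they are part of the content of the proposition and should be written out rather than asserted.
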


\begin{proof}
    First, it is an easy check that bad local boundary configurations are preserved by benzene moves and square moves, so this is really a statement about $\CRG(\underline a)\slash\sim$.

    Assuming $w_i\ngeq w_{i+1}$, we will show using \Cref{algo:growth} that $\mathcal G(L)$ has a bad local boundary configuration between $b_i$ and $b_{i+1}$. Since $W\sim \mathcal G(L)$ by \Cref{thm:growth}, it follows that $W$ has a bad local boundary configuration between $b_i$ and $b_{i+1}$. The fact that $w_i\ngeq w_{i+1}$ implies one of the following 6 possibilities:

    \begin{enumerate}[leftmargin=*]
        \item $w_i = a\in [4], w_{i+1}=b\in [r]$ with $a<b$. In this case, \Cref{lem:descent} implies that $b_i$ and $b_{i+1}$ have a common neighbor in $W$, which is a bad local boundary configuration. 

        \item $w_i = \overline a\in \overline {[4]}, w_{i+1} = \overline b\in \overline{[4]}$ with $a>b$. Once again, \Cref{lem:descent} implies that $b_i$ and $b_{i+1}$ have a common neighbor.

        \item $w_i = 1, w_{i+1} = \overline 1$. The growth rule $1\overline 1\to \emptyset$ applies, showing that $b_i$ and $b_{i+1}$ are connected. This is a bad local boundary configuration.

        \item $w_i = a\in [4], w_{i+1} = \{b,c\}\in {[4]\choose 2}$ such that $a<b<c$. In $\osc_i(W)$, $b_{i+1}$ splits into two boundary vertices $b_{i+1}$ and $b_{i+1}'$. Since $a<b$, \Cref{lem:descent} implies that $b_i$ is adjacent to $b_{i+1}$ in $\osc_i(W)$, and hence de-oscillizing gives a bad local boundary configuration.

        \item $w_i = \{a,b\}\in {[4]\choose 2}, w_{i+1} = \overline c\in \overline{[4]}$ such that $c< \min
        \left\{[4]\setminus\{a,b\}\right\}$. For ease of notation, let $[4]\setminus \{a,b\} = \{d, e\}$ with $d<e$. Before oscillizing, we replace $w_i$ by $\overline{[4]\setminus\{a,b\}} = \{\overline d, \overline e\}\in {\overline{[4]}\choose 2}$, which on the level of webs corresponds to applying $\flip_i$. Then the descent condition becomes $c<d< e$. 
        
        In $\osc_i(W)$, $b_i$ splits into two boundary white vertices $b_i, b_i'$. The fact that $c<d$ then implies by \Cref{lem:descent} that $b_i'$ and $b_{i+i}$ have a common neighbor. De-oscillizing implies that $b_i$ and $b_{i+1}$ have a common neighbor, where $b_i$ is a white vertex incident to an hourglass edge. Applying $\flip_i$ again gives a bad local boundary configuration.

        \item $w_i = \{a,b\}, w_{i+1} = \{c,d\}$ such that $a<c$ or $b<d$. We list out all possible cases and examine possible growth rules. The possibilites for $\{a,b\}\{c,d\}$ are

        \begin{tabular}{ccccc}
             $\{1, 2\}\{1, 3\}$ & $\{1, 2\}\{1, 4\}$ & $\{1, 2\}\{2, 3\}$ & $\{1, 2\}\{2, 4\}$ & $\{1, 2\}\{3, 4\}$\\
             $\{1, 3\}\{1, 4\}$ & $\{1, 3\}\{2, 3\}$ & $\{1, 3\}\{2, 4\}$ & $\{1, 3\}\{3, 4\}$\\
             $\{1, 4\}\{2, 3\}$ & $\{1, 4\}\{2, 4\}$ & $\{1, 4\}\{3, 4\}$\\
             $\{2, 3\}\{1, 4\}$ & $\{2, 3\}\{2, 4\}$ & $\{2, 3\}\{3, 4\}$\\
             $\{2, 4\}\{3, 4\}$
        \end{tabular}
        \vspace{3mm}

        We will show that oscillizing, applying the growth algorithm, and then de-oscillizing always produces a bad local boundary configuration. In all cases except $\{1, 2\}\{1, 3\}$ and $\{2, 4\}\{3, 4\}$, \Cref{fig:descents} shows the growth rules to be applied in order to create a bad local boundary configuration (see \Cref{fig:desc_badconfig}).

        \begin{figure}[h]
            \centering
            \includegraphics[width=0.8\linewidth]{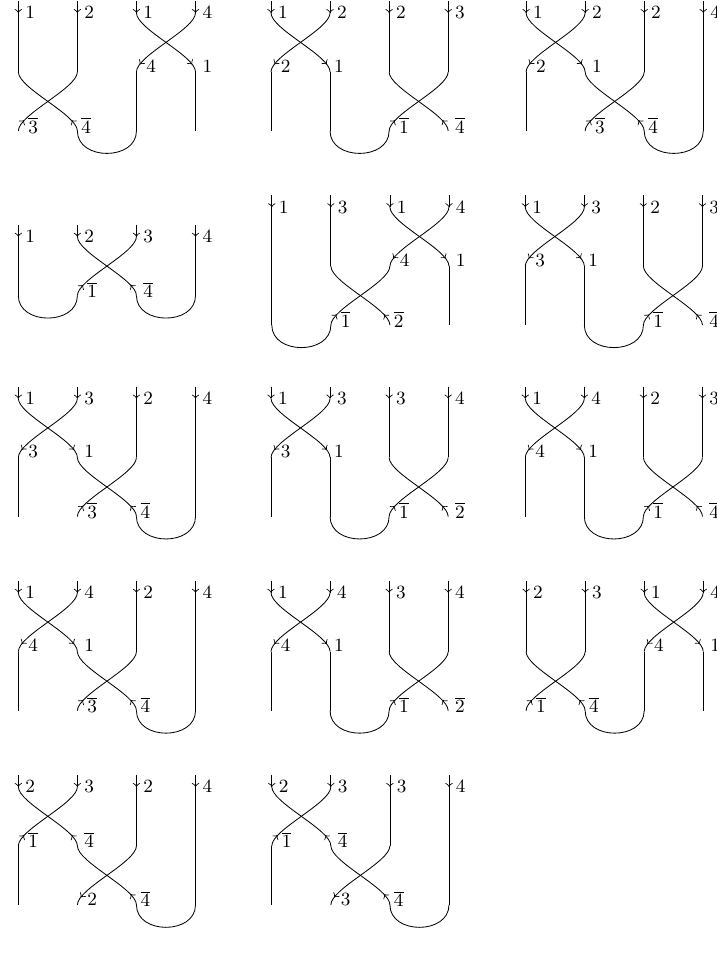}
            \caption{Growth rules to be applied in all cases except $\{1,2\}\{1,3\}$ and $\{2, 3\}\{2, 4\}$.}
            \label{fig:descents}
        \end{figure}

        \begin{figure}[h]
            \centering
            \includegraphics[width=0.7\linewidth]{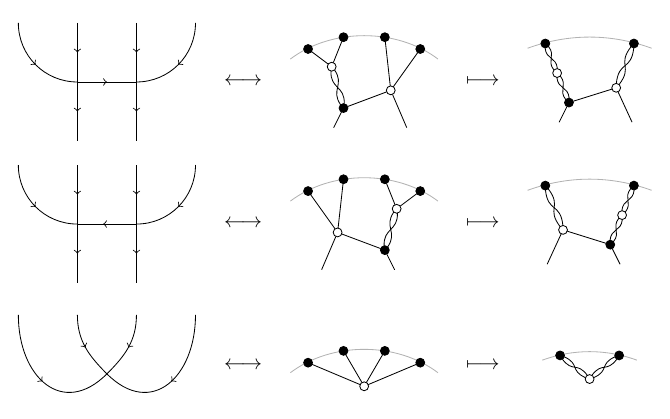}
            \caption{Bad local boundary configurations corresponding to the symmetrized six-vertex configurations in \Cref{fig:descents}.}
            \label{fig:desc_badconfig}
        \end{figure}

        In the case of $\{1,2\}\{1,3\}$, no growth rules can be applied immediately, without the existence of certain witnesses. If $3$ has a witness of $\overline1, 2$ or $3$ following it, we may apply the $13\to 31$ growth rule and form a bad local boundary configuration as shown in \Cref{fig:1213}. Similarly, if $3$ has a witness of $\overline 2,\overline 3$ or $4$ following it, we may apply the $13\to \overline2\overline4$ growth rule to obtain a bad local boundary configuration (see \Cref{fig:1213}).

        \begin{figure}[h]
            \centering
            \includegraphics[width=0.7\linewidth]{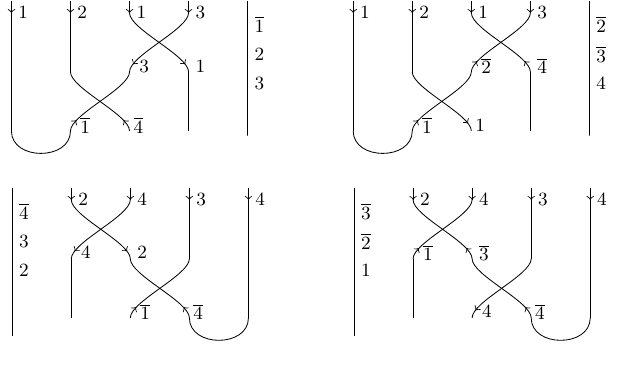}
            \caption{Growth rules applied in the case of $\{1, 2\}\{1, 3\}$ (top row) and $\{2, 3\}\{2, 4\}$ (bottom row), in the presence of certain witnesses.}
            \label{fig:1213}
        \end{figure}

        The only remaining case is if $3$ does not have any of these witnesses following it. Then we observe that no growth rules are applicable involving any of these four strands (note that no growth rule can affect the leading $1$). For the growth algorithm to eventually terminate, some sequence of growth rules occurring beyond the $1213$ string must result in one of the above witnesses following $3$, reducing to the case of the previous paragraph.

        The case of $\{2,4\}\{3, 4\}$ is dealt with in exactly the same way, by observing the witnesses preceding $2$ to apply either $24\to 42$ or $24\to \overline 1\overline 3$ (see \Cref{fig:1213}).
    \end{enumerate}
\end{proof}

\section{Lattice words and Littlewood--Richardson tableaux}\label{Sec:Fund}

The goal of this section is to show that the number of sorted balanced lattice words with no $\underline C$-descents (see \Cref{def:latticeword}) equals $\dim\inv_G(\otimes_{i=1}^m V(\lambda_i))$. We first formulate another equivalent criterion for checking when a lattice word $L$ has no descents.

\begin{definition}
    Let $w \in [4]\sqcup {[4]\choose 2}\sqcup \overline{[4]}$ be a letter, and for $i\in \{1,2,3\}$ such that $|w|\ge i$, define $w^{(i)}$ to be the $i$-th smallest element of the set $w$ (treating barred letters as elements of ${[4]\choose 3}$ by taking complements). We extend this to lattice words $L = w_1\cdots w_m$ by setting $L^{(i)} = w_1^{(i)}\cdots w_m^{(i)}$, where we omit letters $w_j^{(i)}$ that are not defined (i.e. when $|w_j|<i$).
\end{definition}

\begin{example}
    If $L = 11232\{1,4\}\{2,3\}\{1,3\}\overline 3 \overline 2\overline 2\overline 4$, then
    \begin{align*}
    L^{(1)} &= 112321211111,\\
    L^{(2)} &= 4332332,\\
    L^{(3)} &= 4443.\\
    \end{align*}
\end{example}

\begin{lemma}
\label{lem:descent2}
    Let $v,w$ be adjacent letters in a sorted word $L$. Then $v\ge w$ if and only if $v^{(i)}\ge w^{(i)}$ for all $i\le |v|\le |w|$. Therefore, $L$ has no descents if and only if each $L^{(i)}$ is a word with weakly decreasing letters.
\end{lemma}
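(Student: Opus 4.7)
The plan is to first prove the pairwise equivalence for a single adjacent pair $v, w$, and then deduce the global claim about $L^{(i)}$ using sortedness. Since $L$ is sorted we have $|v|\le |w|$, so I would reduce the pairwise statement to a case analysis on the type pair $(|v|, |w|)\in\{(1,1),(1,2),(1,3),(2,2),(2,3),(3,3)\}$, cross-referencing each with the matching item of \Cref{def:descent}.

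Cases $(1,1)$, $(1,2)$, $(2,2)$ should unpack immediately from \Cref{def:descent}(1), (4), (6) and the Gale-order description spelled out in the remark following that definition. Case $(3,3)$ (corresponding to \Cref{def:descent}(2)) I would handle with the observation that if $v = \overline a$, $w = \overline b$ with $a<b$, then as $3$-subsets $v = [r]\setminus\{a\}$ and $w = [r]\setminus\{b\}$ differ by replacing the entry $a$ (which appears in $w$) with $b$ (which appears in $v$), so $v^{(i)}\ge w^{(i)}$ pointwise; the case $a>b$ is symmetric and produces a strict failure at some index.

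The remaining cases $(1,3)$ and $(2,3)$ encode the less symmetric conditions of \Cref{def:descent}(3) and (5) and will need finite checks. For $(1,3)$, write $v\in[r]$ and $w = \overline c$; the smallest element of $[r]\setminus\{c\}$ equals $1$ unless $c=1$, so a short split shows $v^{(1)} < w^{(1)}$ iff $v=1$ and $c=1$, matching \Cref{def:descent}(3). For $(2,3)$, writing $v=\{a,b\}$ and $w=\overline c$, the condition $c<\min([r]\setminus\{a,b\})$ enumerates to exactly the four descent pairs $\{1,2\}\overline 1, \{1,3\}\overline 1, \{1,4\}\overline 1, \{1,2\}\overline 2$, and in each of these the Gale comparison fails at $i=1$ or $i=2$; for every other $(\{a,b\},\overline c)$ one verifies directly that $a\ge w^{(1)}$ and $b\ge w^{(2)}$.

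For the ``therefore'' clause, sortedness of $L$ guarantees that the indices $j$ with $|w_j|\ge i$ form a suffix of $[n]$, so consecutive letters of $L^{(i)}$ are exactly of the form $w_j^{(i)}, w_{j+1}^{(i)}$ for some $j$ with $|w_j|,|w_{j+1}|\ge i$. I would then conclude that each $L^{(i)}$ is weakly decreasing iff for every adjacent pair $(w_j,w_{j+1})$ in $L$ and every $i\le |w_j|$ we have $w_j^{(i)}\ge w_{j+1}^{(i)}$, which by the first part is precisely the no-descent condition. The main obstacle is simply the bookkeeping in the $(2,3)$ case; the rest is routine translation between the descent definition and Gale order on subsets of $[r]$.
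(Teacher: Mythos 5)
Your proposal is correct and follows essentially the same route as the paper: a case analysis on the type pair $(|v|,|w|)$ matched against the items of the descent definition, followed by the observation that sortedness makes the letters contributing to each $L^{(i)}$ a consecutive suffix. Your handling of the $(3,3)$ case via element replacement and of the $(2,3)$ case via direct enumeration of the four descent pairs are minor streamlinings of the paper's index-by-index checks, and your explicit justification of the ``therefore'' step is a point the paper leaves implicit.
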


\begin{proof}
    We work through all possible cases for $|v|$ and $|w|$:
    \begin{itemize}[leftmargin=*]
        \item $|v| = |w| = 1$: In this case $i = 1$, $v^{(1)} = v, w^{(1)} = w$ so there is nothing to prove.
        \item $|v| = 1, |w| = 2$: Once again $i = 1$. Let $w = \{a,b\}$ with $a<b$. Then $v\ngeq w$ if and only if $v < a$, i.e., if and only if $v^{(1)} < w^{(1)}$. Thus $v\ge w$ if and only if $v^{(1)}\ge w^{(1)}$.
        \item $|v| = 1, |w| = 3$: Again $i = 1$. Let $w = \overline a$, and $[4]\setminus \{a\} = \{b,c,d\}$ with $b< c< d\le 4$. But this implies that $b\le 2$. Thus $v^{(1)} \ge w^{(1)} = b$ unless $b = 2$ and $v = 1$. This is equivalent to saying $v = 1, w = \overline 1$, i.e. that $v\ngeq w$.
        \item $|v| = 2, |w| = 2$: Here $i = 1$ or $2$. Let $v = \{a,b\}, w = \{c,d\}$. Then $v\ngeq w$ if and only if $a<c$ or $b<d$, i.e. if and only if $v^{(1)}< w^{(1)}$ or $v^{(2)}< w^{(2)}$. Negating both statements says that $v\geq w$ if and only if $v^{(1)}\ge w^{(1)}$ and $v^{(2)}\ge w^{(2)}$.
        \item $|v| = 2, |w| = 3$: Here $i = 1$ or $2$. Let $v = \{a,b\}, w = \overline c$, with $a<b$. Then $v\ngeq w$ if and only if $c<\min \{[4]\setminus \{a,b\}\}$, and this is possible only if $c=1$ or $2$. In the former case, we must have $a = 1$ (if not, $1\in [4]\setminus \{a,b\}$) and so $v^{(1)} = 1< 2 = w^{(1)}$. In the latter case, we have $c = 2 < \min\{[4]\setminus\{a,b\}\}$ which implies that $v = \{a,b\} = \{1,2\}$. Thus $v^{(2)} = 2 < 3 = w^{(2)}$. Thus $v\ngeq w \implies v^{(1)}< w^{(1)}$ or $v^{(2)}< w^{(2)}$. 
        
        On the other hand suppose $v\ge w$, so that $c \ge \min\{[4]\setminus \{a,b\}\}$. If $c\ne a$, then $a \in [4]\setminus \{c\}$ so that $w^{(1)} = \min\{[4]\setminus \{c\}\}\le a = v^{(1)}$. If $c = a$, then $w^{(1)} = \min\{[4]\setminus \{c\}\} \le \min\{[4]\setminus\{a,b\}\}\le c = a = v^{(1)}$. In either case $v^{(1)}\ge w^{(1)}$. 

        Since $w^{(2)}$ is the second largest element of $[4]\setminus \{c\}$, we see that $w^{(2)}\le 3 $. So if $b \ge 3$, then $b = v^{(2)}\ge w^{(2)}$. The only remaining case is if $b = 2$ (note that $b\ne 1$ since $b>a$), and hence $a = 1$. This implies that $c\ge 3 = \min\{[4]\setminus \{a,b\}\}$, and hence $[4]\setminus\{c\} = \{1,2,3\}$ or $\{1,2,4\}$. In either case, $w^{(2)} = 2$ and hence $v^{(2)}\ge w^{(2)}$. This shows that $v\ge w \implies v^{(1)}\ge w^{(1)}$ and $v^{(2)}\ge w^{(2)}$.

        \item $|v| = |w| = 3$: Here $i = 1,2$ or $3$. Let $v = \overline a, w = \overline b$. Recall that $v\ngeq w$ if and only if $a> b$. If $a> b$ then $v^{(b)} = b < b+1 = w^{(b)}$. If $a\le  b$, then $v^{(i)} = i = w^{(i)}$ if $i< a$, $v^{(i)} = i+1 = w^{(i)}$ if $b\le i \le 3$, and $v^{(i)} = i+1 > i = w^{(i)}$ if $a\le i< b$. This shows that $v^{(i)}\ge w^{(i)}$ for all $i\in[3]$. Therefore $a> b$ if and only if $v^{(i)}< w^{(i)}$ for some $i\in[3]$.
    \end{itemize}
     
\end{proof}

\begin{definition}
    Given a sequence of weights $\underline \lambda = (\lambda_1,\dots, \lambda_m)$, a \emph{$\underline \lambda$-Littlewood--Richardson tableau} $T$ is a sequence of nested partitions $\emptyset=\mu_0\subset \mu_1\subset\cdots \subset \mu_m$ along with a Littlewood--Richardson filling of each skew shape $\mu_i\setminus \mu_{i-1}$ with content $\lambda_i$, i.e., each $\mu_i\setminus\mu_{i-1}$ has a semistandard filling of content $\lambda_i$ such that the reading word (read from right to left, top to bottom) is a lattice word. We say that the shape of $T$ is $\mathsf{sh}(T) = \mu_m$. The set of all $\underline \lambda$-Littlewood--Richardson tableaux is denoted by $\mathsf T(\underline \lambda)$, and the set of all $\underline \lambda$-Littlewood--Richardson tableaux of rectangular shape with $r$-rows is denoted by $\RT(\underline\lambda)$.
\end{definition}

\begin{remark}
    Recall that a weight $\lambda_i$ gives a partition by the rule $\omega_k\mapsto (1^k)$, which is how we can interpret the content of a filling as a weight.
\end{remark}

\begin{example}
\Cref{table:LR} gives an example of a $\underline\lambda$-Littlewood--Richardson tableau of shape $(5,5,5,5,5)$, where $\underline\lambda = (\lambda_1, \lambda_2, \lambda_3)$ with 
\[\lambda_1 = 2\omega_1+\omega_2 = (3, 1, 0, 0),\]
\[\lambda_2 = 2\omega_1+3\omega_3 = (5, 3, 3, 0),\]
\[\lambda_3 = 2\omega_1+\omega_3 = (3, 1, 1, 0).\]
The chain of partitions $\emptyset=\mu_0\subset \mu_1\subset \mu_2\subset\mu_3$ in the example is 
\[\mu_1 = (3, 1, 0, 0),\]
\[\mu_2 = (5, 4, 4, 2),\]
\[\mu_3 = (5, 5, 5, 5).\]

\begin{table*}[h]
    \begin{tabular}[scale=1.5]{|c|c|c|c|c|}
    \hline
        \cellcolor{blue!40}$1$ & \cellcolor{blue!40}$1$ & \cellcolor{blue!40}$1$ & \cellcolor{blue!20}$1$ & \cellcolor{blue!20}$1$ \\
    \hline
        \cellcolor{blue!40}$2$ & \cellcolor{blue!20}$1$ & \cellcolor{blue!20}$1$ & \cellcolor{blue!20}$2$ & \cellcolor{blue!10}$1$ \\
    \hline 
        \cellcolor{blue!20}$1$ & \cellcolor{blue!20}$2$ & \cellcolor{blue!20}$2$ & \cellcolor{blue!20}$3$ & \cellcolor{blue!10}$2$ \\
    \hline 
        \cellcolor{blue!20}$3$ & \cellcolor{blue!20}$3$ & \cellcolor{blue!10}$1$ & \cellcolor{blue!10}$1$ & \cellcolor{blue!10}$3$ \\
    \hline
    \end{tabular}
    \caption{Example of a rectangular Littlewood--Richardson tableau.}
    \label{table:LR}
\end{table*}
\end{example} 

\begin{remark}
    \label{rem:schur}
    For any sequence of weights $\underline \lambda = (\lambda_1,\dots, \lambda_m)$, the Littlewood--Richardson rule implies that the coefficient of $s_\mu$ when $s_{\lambda_1}\cdots s_{\lambda_m}$ when expressed in the Schur polynomial basis is equal to the number of $\underline \lambda$-Littlewood--Richardson tableaux of shape $\mu$. In particular, if $\lambda_i=\weight(\underline c_i)$, then
    \[|\RT(\underline C)| = \dim \inv_G\left(\bigotimes_{i=1}^m V(\lambda_i)\right).\]
\end{remark}

\begin{proposition}\label{prop:LR_bijection}
    Fix a sequence of sorted clasps $\underline C = (\underline c_1,\dots, \underline c_m)$, and let $\weight(\underline C) = (\weight(\underline c_1),\dots, \weight(\underline c_m))$. Then there is a bijection $\BL(\underline C)\simeq \RT(\weight(\underline C))$.
\end{proposition}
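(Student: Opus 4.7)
I construct an explicit bijection $\Phi: \BL(\underline C) \to \RT(\weight(\underline C))$ by a growth procedure. Given $L = L_1 \cdots L_m \in \BL(\underline C)$, reinterpret each letter as a subset of $[r]$: a single letter $a$ becomes $\{a\}$, a pair is unchanged, and a barred letter $\overline a$ becomes $[r] \setminus \{a\}$. Processing letters left-to-right, each letter $w$ adds one box in row $r$ for each $r \in w$, with entry equal to the rank of $r$ within $w$. Setting $\mu_j$ to be the shape after $L_1 \cdots L_j$, the chain $\emptyset = \mu_0 \subset \mu_1 \subset \cdots \subset \mu_m$ together with the induced fillings $T_j$ of $\mu_j \setminus \mu_{j-1}$ is defined to be $\Phi(L)$.

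For well-definedness, the lattice condition on $L$ (the paper's barred-as-$-1$ convention differs from the subset convention by a global shift of all row-counts, so the two give the same inequalities at each prefix) ensures each intermediate $\mu_j$ is a partition, and the balance condition ensures $\mu_m$ is rectangular with $r$ rows. The content of $T_j$ matches $\lambda_j$ since each letter of size $k$ in $L_j$ contributes exactly one entry of each label $1, \ldots, k$. Semistandardness of $T_j$ follows from the sorted and no-descent structure: by \Cref{lem:descent2}, no-descent is equivalent to each $L_j^{(i)}$ being weakly decreasing, and a case analysis on letter sizes shows that for two letters $v, w \in L_j$ with $v$ earlier and both containing row $r$, we have $\operatorname{rank}(r, v) \leq \operatorname{rank}(r, w)$, giving rows weakly increasing; the analogous comparison across different rows of a common column gives columns strictly increasing. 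The lattice condition on $T_j$'s reading word follows from that of $L$: the $k$-th occurrence of label $v$ in reading order corresponds to the $k$-th letter of $L_j$ of size $\geq v$ taken in reverse sorted order, and the lattice condition on $L$ forces the required interleaving between consecutive labels.

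The inverse $\Psi: \RT(\weight(\underline C)) \to \BL(\underline C)$ peels letters from each $T_j$ in reverse sorted order: the last letter of $L_j$ is identified as the set of rows where the outermost boxes of $\mu_j \setminus \mu_{j-1}$ carry the rank pattern $1, 2, \ldots, k$, and iterating recovers all of $L_j$; the sortedness and no-descent property of the output are forced by the semistandard and lattice structure of $T$. Thus $\Phi$ and $\Psi$ are mutual inverses. The main obstacle is the case analysis required to verify semistandardness and the lattice reading-word property of $T_j$, as transitions between single, pair, and triple letters must each be treated with the specific descent conditions of \Cref{def:descent}, though the clean characterization in \Cref{lem:descent2} makes these verifications manageable.
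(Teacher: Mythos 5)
Your construction is the same as the paper's: the forward map adds, for each letter read left to right, a box in each row of the corresponding subset labeled by its rank within that subset, with semistandardness coming from \Cref{lem:descent2} and rectangularity from balancedness; the inverse peels letters off the skew fillings from largest size down, exactly as in the paper. The proposal is correct and takes essentially the same approach, with the usual routine verifications (LR reading-word condition, well-definedness of the peeling) left at a comparable level of detail to the paper's own proof.
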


\begin{proof}
    Suppose first that we are given $L\in \BL(\underline C)$, so $L= L_1\cdots L_m$ such that for each $i$, $L_i$ is of type $\underline c_i$ and has no descents. Assume that $\lambda_1\subset\cdots\subset \lambda_{i-1}$ have been constructed along with Littlewood--Richardson fillings. We shall use the subword $L_i$ to construct $\lambda_i$.

    Picturing $\lambda_{i-1}$ as a Young diagram, we will construct $\lambda_i$ by adding boxes and filling them in as we go, according to the letters in $L_i$ read from left to right as follows:
    \begin{itemize}[leftmargin=*]
        \item For a letter of the form $a\in [4]$, add a box to row $a$ and fill it in with the number $1$.
        \item For a letter of the form $\{a,b\}\in{[4]\choose 2}$ with $a<b$, add a box filled with $1$ to row $a$ and a box filled with $2$ to row $b$.
        \item For a letter of the form $\overline a$, let $\{b,c,d\} = [4]\setminus \{a\}$ with $b<c<d$, and add boxes in rows $b, c, d$ filled with $1,2,3$ respectively.
    \end{itemize}
    Since $L$ is a lattice word, adding any boxes according to the above recipe still gives a partition, and hence we get the partition $\lambda_i$ with a filling of the skew shape $\lambda_i\setminus \lambda_{i-1}$. The reading word of the filling is a lattice word, because every $2$ added is added with a $1$ above it, and every $3$ added is added with a $2$ above it. All that is left to check is that the filling is semistandard, but this is implied by the fact that $L_i$ has no descents, so the rows to which $1$s are added (when reading $L_i$ from left to right) are weakly decreasing, and similarly for $2$ and $3$. By construction, the filling of $\lambda_i\setminus\lambda_{i-1}$ has content $\weight(\underline c_i)$, and the resulting Littlewood--Richardson tableau has shape $\lambda_m$ a rectangle because $L$ is balanced.

    Conversely given $T\in \RT(\weight(\underline C))$, we use the Littlewood--Richardson fillings $\lambda_i\setminus\lambda_{i-1}$ to construct the subwords $L_i$, which we concatenate to get $L = L_1\cdots L_m$. We construct $L_i$ letter by letter from right to left using the filling of $\lambda_i\setminus \lambda_{i-1}$ as follows:
    \begin{enumerate}[leftmargin=*]
        \item If there is no $3$ in the filling, skip this step. Find the smallest $c$ such that a $3$ appears in row $c$. Since this is a Littlewood--Richardson filling, there is at least one $2$ strictly above every $3$. Find the smallest $b$ such that a $2$ appears in row $b$, so $b<c$. Finally, find the smallest $a$ such that a $1$ appears in row $a$, so $a<b<c$. If $\{d\} = [4]\setminus \{a,b,c\}$, then add the letter $\overline d$ to $L_i$ and delete the rightmost $1,2,3$ from rows $a,b,c$ respectively. Repeat this step as long as $3$s remain.
        \item If there is no $2$ in the filling, skip this step. Find the smallest $b$ such that a $2$ appears in row $b$, and the smallest $a$ such that a $1$ appears in row $a$. Then $a<b$. Add the letter $\{a,b\}$ to $L_i$ and delete the rightmost $1,2$ from rows $a,b$ respectively. Repeat this step as long as $2$s remain.
        \item Find the smallest $a$ such that a $1$ appears in row $a$. Add the letter $a$ to $L_i$ and delete the rightmost $1$ in row $a$. Repeat this step as long as $1$s remain.
    \end{enumerate}

    By construction, $L_i$ is of type $\underline c_i$. Next we check that $L_i$ consists of no descents. For each $j\in\{1,2,3\}$, $L_i^{(j)}$ read from right to left is the sequence rows in $\lambda_i\setminus \lambda_{i-1}$ from which $j$'s were removed. By construction, this is a weakly increasing sequence, and hence $L_i^{(j)}$ read from left to right is a weakly decreasing sequence. By \Cref{lem:descent2}, this implies that $L_i$ has no descents.

    Lastly, since at each stage of the process of removing boxes we are still left with a Young diagram, the resulting word $L = L_1\cdots L_m$ is a lattice word, and is balanced since $T$ is rectangular. It is a routine check that these two operations give inverse bijections between $\BL(\underline C)$ and $\RT(\weight(\underline C))$.
\end{proof}

Coupled with \Cref{rem:schur}, we deduce that $\BL(\underline C)$ counts the dimension of the corresponding invariant space.

\begin{corollary}\label{cor:LR}
    For a sequence of sorted clasps $\underline C = (\underline c_1,\dots, \underline c_m)$, $$|\BL(\underline C)| = \dim\inv_G\left(\bigotimes_{i=1}^m V(\lambda_i)\right).$$
\end{corollary}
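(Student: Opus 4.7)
The plan is to prove this corollary by simply chaining together the two results established just above it. The corollary is essentially an immediate consequence of \Cref{prop:LR_bijection} and \Cref{rem:schur}, so there is no significant obstacle; the work was done in proving the bijection.

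First I would invoke \Cref{prop:LR_bijection}, which provides an explicit bijection $\BL(\underline C) \simeq \RT(\weight(\underline C))$. This immediately gives $|\BL(\underline C)| = |\RT(\weight(\underline C))|$.

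Then I would apply \Cref{rem:schur}: since $\lambda_i = \weight(\underline c_i)$, the Littlewood--Richardson rule identifies $|\RT(\weight(\underline C))|$ with the multiplicity of the trivial representation in $\bigotimes_{i=1}^m V(\lambda_i)$, which is exactly $\dim\inv_G(\bigotimes_{i=1}^m V(\lambda_i))$. Concretely, rectangular $r$-row shapes correspond to the Schur polynomial $s_{(k^r)}$ for some $k$, and since $s_{(k^r)} = 1$ as a character of $\SL_r$, the coefficient of $s_{(k^r)}$ in $s_{\lambda_1}\cdots s_{\lambda_m}$ counts the number of ways the trivial representation appears. Summing over all $k$ and using that each $\RT(\underline\lambda)$ element has a unique such rectangular shape determined by the total content, we get $|\RT(\weight(\underline C))| = \dim\inv_G(\bigotimes_i V(\lambda_i))$.

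Combining these two identities yields
\[
|\BL(\underline C)| = |\RT(\weight(\underline C))| = \dim\inv_G\left(\bigotimes_{i=1}^m V(\lambda_i)\right),
\]
as desired. There is no real obstacle here since the substantive combinatorial content, namely the bijection between balanced lattice words without $\underline C$-descents and rectangular Littlewood--Richardson tableaux, was already established in \Cref{prop:LR_bijection}.
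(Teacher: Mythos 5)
Your proposal is correct and matches the paper's argument exactly: the paper likewise deduces the corollary immediately by combining the bijection of \Cref{prop:LR_bijection} with the dimension count in \Cref{rem:schur}. The extra justification you give for why rectangular $r$-row shapes pick out the trivial $\SL_r$-representation is accurate but not needed beyond what \Cref{rem:schur} already asserts.
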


\section{Main theorem for sorted clasps}
\label{sec:main-sorted-proof}

We are now ready to prove the theorem for sorted clasps.
\begin{thm}\label{thm:sorted}
    Fix boundary conditions $\underline a \in [3]^n$ and a \emph{sorted} clasp sequence $\underline C = (\underline c_1,\dots, \underline c_m)$, and let $\lambda_i = \weight(\underline c_i)$. Then a basis for $\inv_G\left(\bigotimes_{i=1}^m V(\lambda_i)\right)$ is given by
    \[\mathcal W_{\underline C} \coloneqq \{\pi_{\underline C}([W]) \mid [W]\in \mathcal W_{\underline a}\setminus \ker\pi_{\underline C}\}.\]
    Moreover, the following are equivalent for $W\in \mathcal W_{\underline a}$:
    \begin{enumerate}
        \item $[W] \notin \ker\pi_{\underline C}$.
        \item $W$ is non-convex.
        \item There are no local boundary configurations from \Cref{fig:bad_configs} occurring within any clasp of $W$.
        \item The lattice word $\L(W) = \partial\sep_W$ has no $\underline C$-descents.
        \item $W$ has no trips that start and end in the same clasp.
    \end{enumerate}

    \begin{proof}
        From \Cref{prop:nonconvex}, we know that $\mathcal W_{\underline C}$ as defined in the theorem spans $\inv_G(\bigotimes_{i=1}^m V(\lambda_i))$, so the first claim follows if we show that $|\mathcal W_{\underline C}|\le \dim\inv_G (\bigotimes_{i=1}^m V(\lambda_i))$. We will show this by simultaneously proving the equivalence of (1), (2), (3), and (4).
        
        (1)$\implies$(2) is \Cref{prop:nonconvex},(2)$\implies$(3) is seen directly from \Cref{fig:bad_configs}, and (3)$\implies$(4) is \Cref{prop:descent}. Using (1)$\implies$(4) with the fact that webs with the same lattice word are move equivalent, we deduce that 
        \[|\mathcal W_{\underline C}|\le |\BL(\underline C)| = \dim \inv_G\left(\bigotimes_{i=1}^m V(\lambda_i)\right),\]
        where the last equality follows from \Cref{cor:LR}. This shows that $\mathcal W_{\underline C}$ is indeed a basis for $\inv_G\left(\bigotimes_{i=1}^m V(\lambda_i)\right)$. As a result $|\mathcal W_{\underline C}| = |\BL(\underline C)|$, and therefore (4)$\implies$(1). This shows the equivalence of (1), (2), (3) and (4).

        It is easy to check (see \Cref{fig:badconf_trips}) that every bad local configuration has a trip that starts and ends in the same clasp, so (5)$\implies$(3). It now suffices to check that if $W$ is non-convex, then it satisfies (5).

        \begin{figure}[h]
            \centering
            \includegraphics[width=15cm]{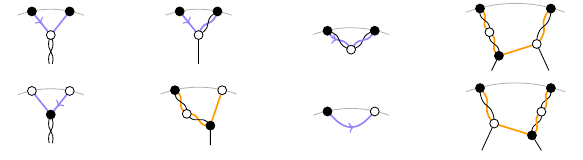}
            \caption{Returning $\trip_1$- (in purple) and $\trip_2$-strands (in orange) within the bad local boundary configurations from \Cref{fig:bad_configs}.}
            \label{fig:badconf_trips}
        \end{figure}

        By the rotation invariance of non-convexity and property (5), it suffices to show that if $W$ is non-convex, no trip that starts in the \textit{first} clasp can end in the \textit{first} clasp. Assuming $W$ is non-convex, the equivalence of (2) and (4) implies that $\L(W) = L_1\cdots L_m$ where $L_1$ has no descents. Since $L$ is a lattice word, so is $L_1$. This forces $L_1$ to be of the form
        \[L_1 = 1\cdots 1\{1, 2\}\cdots \{1,2\}\overline 4\cdots \overline 4.\] Assume for the sake of contradiction that there is a trip $\ell$ starting at $b_i$ and ending at $b_j$ within the first clasp, with $i< j$. Let $e_i, e_j, F(e_i), F(e_j)$ be as in the definition of $\sep_W$.
        \begin{itemize}[leftmargin=*]
            \item Suppose $a_j = 1$, so $\sep_W(e_j) = 1$. This implies that no trip through $e_j$ separates $F(e_j)$ from the base face $F_0$, contradicting the fact that $\ell$ is such a trip.

            \item Suppose $a_j = 2$. We may partially oscillize $W$ to $W' = \osc_j(W)$ by splitting $b_j$ into two distinct black vertices $b_j, b_j'$. Then $\sep_{W'}(e_j) = 1, \sep_{W'}(e_j') = 2$. The fact that $\sep_W(e_j) = 1$ implies that no trip through $e_j$ separates $F(e_j)$ from the base face $F_0$, so $\ell$ must end at $b_j'$ and not $b_j$. Note that $F(e_j')$ is the face bounded by $e_j, e_j',$ and the boundary of the disk. One trip through $e_j'$ separates $F(e_j')$ from $F_0$ since $\sep_W(e_j')= 2$. But the $\trip_1$-strand through $b_j$ traversing exactly $e_j, e_j'$ is such a trip, contradicting the fact that $\ell$ is another such trip.

            \item Suppose $a_j = 3$, so $\sep_W(e_j) = 4$. This implies that every trip through $e_j$ separates $F(e_j)$ from $F_0$, contradicting the fact that $\ell$ is a trip that does not do so.
        \end{itemize}

        In each case we arrive at a contradiction, and hence there is no trip starting and ending at the first clasp, showing that (2)$\implies$(5) as desired.
    \end{proof}
\end{thm}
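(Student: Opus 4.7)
The plan is to establish the cycle (1)$\Rightarrow$(2)$\Rightarrow$(3)$\Rightarrow$(4)$\Rightarrow$(1) using the machinery of Sections 3--5, then splice in condition (5). The first three implications are essentially immediate: (1)$\Rightarrow$(2) is the contrapositive of \Cref{prop:nonconvex}; (2)$\Rightarrow$(3) holds because each configuration in \Cref{fig:bad_configs} comes with a drawn cut path lying in a single clasp witnessing partial convexity, so its presence within a clasp contradicts non-convexity; and (3)$\Rightarrow$(4) is the contrapositive of \Cref{prop:descent}, which converts any descent in $\L(W)$ into a bad local boundary configuration.

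To close the cycle I would use a dimension count. Since $\pi_{\underline C}$ is surjective and \Cref{prop:nonconvex} places every partially convex web in $\ker\pi_{\underline C}$, the set $\mathcal W_{\underline C}$ spans $\inv_G(\bigotimes_i V(\lambda_i))$, giving $|\mathcal W_{\underline C}| \ge \dim \inv_G(\bigotimes_i V(\lambda_i))$. Conversely, top representatives in distinct move-equivalence classes have distinct lattice words by \Cref{thm:growth}, and the implication (1)$\Rightarrow$(4) already shown places each such word in $\BL(\underline C)$, yielding
\[|\mathcal W_{\underline C}| \le |\BL(\underline C)| = \dim \inv_G\Bigl(\bigotimes_{i=1}^m V(\lambda_i)\Bigr)\]
by \Cref{cor:LR}. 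The two bounds force equality, so $\mathcal W_{\underline C}$ is a basis and (4)$\Rightarrow$(1).

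Now for (5): the direction (5)$\Rightarrow$(3) is a finite check, since each bad configuration of \Cref{fig:bad_configs} contains a concrete $\trip_1$- or $\trip_2$-strand whose two endpoints are the two boundary vertices of the configuration, so such a configuration within a clasp would witness a returning trip. For (2)$\Rightarrow$(5), I would use rotation invariance of both non-convexity and the clasp partition to reduce to showing no trip starts and ends in the \emph{first} clasp. The equivalence (2)$\Leftrightarrow$(4) forces the subword $L_1$ to be a descent-free lattice word, and a short check on sorted words shows that the only such possibility is
\[L_1 = 1\cdots 1\,\{1,2\}\cdots\{1,2\}\,\overline 4 \cdots \overline 4.\]
Assuming for contradiction a trip $\ell$ ends at some $b_j$ in the first clasp, I would split into cases by $a_j \in \{1,2,3\}$: these pin $\sep_W(e_j)$ to be $1$, $\{1,2\}$, or $4$ respectively, and the trip-separation definition of $\sep_W$ then specifies exactly which trips through $e_j$ separate the face $F(e_j)$ from the base face $F_0$, yielding a contradiction with $\ell$ being such a trip (or not being such a trip). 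In the $a_j = 2$ case I would first oscillize at $b_j$ to disambiguate the two strands at the hourglass edge.

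The main obstacle is the (2)$\Rightarrow$(5) step; the rest is largely bookkeeping on top of the preparatory sections. The subtle point is to combine the rigid form of $L_1$ forced by the descent-free condition with the trip-based definition of $\sep_W$, and in particular in the $a_j = 2$ case to carefully track that the unique trip through the oscillized edge $e_j'$ which separates $F(e_j')$ from $F_0$ is already accounted for by the local one using both boundary edges at $b_j$, leaving no room for another returning trip.
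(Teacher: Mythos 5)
Your proposal is correct and follows essentially the same route as the paper's proof: the cycle (1)$\Rightarrow$(2)$\Rightarrow$(3)$\Rightarrow$(4) via \Cref{prop:nonconvex}, \Cref{fig:bad_configs}, and \Cref{prop:descent}, closed by the counting argument $|\mathcal W_{\underline C}|\le|\BL(\underline C)|=\dim\inv_G(\bigotimes_i V(\lambda_i))$ from \Cref{cor:LR}, then (5)$\Rightarrow$(3) by inspecting returning trips in the bad configurations and (2)$\Rightarrow$(5) by rotating to the first clasp, using the forced form of $L_1$, and case-checking $a_j\in\{1,2,3\}$ against the separation labels.
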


\section{Sorting boundary conditions}
\label{sec:sorting}

The goal of this section is to construct a map $\swap$ that swaps two adjacent boundary conditions of a fully reduced hourglass plabic graph. More precisely, let $\underline a = (a_1,\dots, a_n)$ such that $a_1\ne a_2$. Define $\underline a' = (a_1', \dots, a_n')$ by
\[a_i' = \begin{cases}
    a_2 & \text{if }i=1,\\
    a_1 & \text{if }i=2,\\
    a_i &\text{otherwise.}
\end{cases}\]
Then we will construct a map $\swap:\CRG(\underline a)\slash\sim\ \to \CRG(\underline a')\slash\sim$ that preserves non-convexity and returning trips (with respect to some chosen clasp sequence). Conjugating by a suitable rotation gives the map $\swap_{i, i+1}$ that swaps the boundary conditions $a_i, a_{i+1}$ for any $i\in[n]$.

For simplicity, we first consider the \textit{oscillating} case; the general case is addressed in \Cref{sec:non-osc-swap}. The na\"ive idea is to make local changes near $b_1, b_2$, as in \Cref{fig:swap_local}. The main issue with this approach is that these local changes may create webs which are not fully reduced, since moves can propagate these changes through the web, see \Cref{fig:pathological_swap}. We refine this idea by carefully picking a representative element in each move-equivalence class, such that the result of the local transformation is still fully reduced.

\begin{figure*}[h!]
    \centering
    \includegraphics[width=0.4\linewidth]{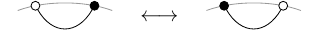}

    \centering
    \begin{subfigure}[h]{0.4\textwidth}
        \centering
        \includegraphics[width=0.9\textwidth]{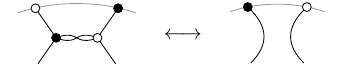}
    \end{subfigure}%
    ~ 
    \begin{subfigure}[h]{0.5\textwidth}
        \centering        \includegraphics[width=\textwidth]{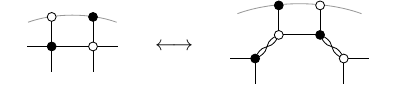}
    \end{subfigure}
    \caption{Local transformations that swap boundary conditions.}        
    \label{fig:swap_local}
\end{figure*}

\begin{figure*}[h!]
    \centering
    \includegraphics[width=0.7\linewidth]{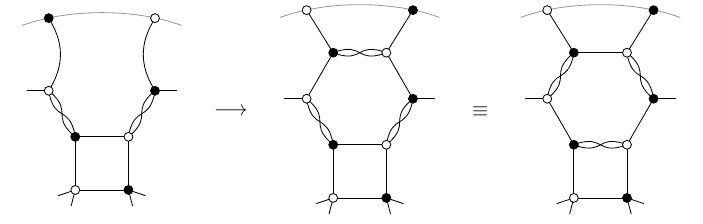}
    \caption{An example where na\"ively applying the local change from \Cref{fig:swap_local} creates a web which is not fully reduced.}        
    \label{fig:pathological_swap}
\end{figure*}

We move freely between hourglass plabic graphs of oscillating type and symmetrized six-vertex configurations for a simpler presentation of the arguments.

\subsection{Preliminary lemmas}

Throughout this subsection, let $b_1,b_2$ be adjacent boundary vertices of $D\in \WSSV(\underline a)$ with $\trip_2$-strands $\ell_1$ and $\ell_2$ such that $\ell_1$ is oriented away from $b_1$, and $\ell_2$ is oriented towards $b_2$ (this corresponds to $b_1$ being black and $b_2$ being white in the web). There are $3$ cases here:
\begin{itemize}
    \item $\ell_1=\ell_2$, or
    \item $\ell_1\ne \ell_2$ intersect, or
    \item $\ell_1$ and $\ell_2$ do not intersect.
\end{itemize}

\begin{lemma}\label{lem:singlestrand}
    If $\ell_1=\ell_2$, then $b_1$ and $b_2$ are connected in $D$ by a single edge.
\end{lemma}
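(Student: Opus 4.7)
The plan is to argue by contradiction using property (P1) of \Cref{prop:well_oriented}, which forbids self-intersections and double crossings among $\trip_2$-strands of a well-oriented configuration. Suppose $\ell := \ell_1 = \ell_2$ passes through at least one internal vertex $v$ of $D$. By (P1), $\ell$ is a simple arc in the disk from $b_1$ to $b_2$. Since $b_1$ and $b_2$ are adjacent boundary vertices, the short boundary arc $\alpha$ between them contains no other boundary vertex of $D$, so $\ell \cup \alpha$ is a simple closed Jordan curve bounding a closed region $R$ of the disk.

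Next I would examine the local picture at $v$. The vertex $v$ has degree $4$, and $\ell$ uses two opposite edges of $v$ (since $\trip_2$-strands go straight across at internal vertices). The remaining two edges are also opposite, and together they belong to some $\trip_2$-strand $\ell'$ passing straight through $v$. Note $\ell' \neq \ell$, for otherwise $\ell$ would self-intersect at $v$, contradicting (P1). By the cyclic alternation of $\ell$-edges and $\ell'$-edges around $v$, exactly one $\ell'$-edge at $v$ lies inside $R$ and the other lies outside, so $\ell$ and $\ell'$ actually cross transversally at $v$.

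Following $\ell'$ in each direction to its boundary endpoints, neither endpoint can be $b_1$ or $b_2$ (those are used by $\ell$), and $\alpha$ contains no other boundary vertices of $D$. Hence both endpoints of $\ell'$ lie outside $R$. It follows from the Jordan curve theorem that $\ell'$ must cross $\ell \cup \alpha$ an even number of times; since $\ell'$ lies in the open disk, all such crossings occur along $\ell$ itself. Combined with the crossing at $v$, this forces $\ell'$ and $\ell$ to cross at least twice, a double crossing, contradicting (P1). Therefore $\ell$ has no internal vertices, and so must consist of a single edge connecting $b_1$ to $b_2$.

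The main subtlety lies in the cyclic-order claim at $v$ — that the two straight-through pairs of edges genuinely interleave, so that one $\ell'$-edge lands inside $R$ and one outside. This is immediate from the planar structure of a $4$-valent vertex equipped with two opposite-edge pairings, but it is the step that makes the crossing at $v$ transverse rather than tangential, which in turn drives the parity argument.
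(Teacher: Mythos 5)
Your proof is correct and follows essentially the same route as the paper's: the paper likewise observes that any $\trip_2$-strand meeting $\ell_1$ would have to escape the region bounded by $\ell_1$ and the short boundary arc by crossing $\ell_1$ again, violating (P1), and hence $\ell_1$ carries no internal vertices. Your version merely spells out the transversality at $v$ and the Jordan-curve parity count that the paper leaves implicit.
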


\begin{proof}
    Suppose some $\trip_2$-strand $\ell$ intersects $\ell_1$. Since there are no boundary vertices between $b_1$ and $b_2$, $\ell$ must escape the region bounded by $\ell_1$ and the boundary by intersecting $\ell_1$ again. This contradicts (P1), since $D$ is well-oriented. Therefore, no $\trip_2$-strand intersects $\ell_1$, so there are no interior vertices on $\ell_1$. In other words, $b_1$ and $b_2$ are connected by a single edge.
\end{proof}

\begin{lemma}\label{lem:crossing}
    If $\ell_1\ne\ell_2$ intersect, then there exists $D'\sim D$ via Yang--Baxter moves, such that $b_1$ and $b_2$ have a common neighbor in $D'$.
\end{lemma}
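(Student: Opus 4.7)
The plan is to induct on the number $N(D)$ of $\trip_2$-strands other than $\ell_1, \ell_2$ that meet the interior of the region $R$, where $R$ is bounded by the initial arc of $\ell_1$ from $b_1$ up to its first intersection $v$ with $\ell_2$, the final arc of $\ell_2$ from $v$ back to $b_2$, and the short boundary arc from $b_2$ to $b_1$ (which contains no other boundary vertices, since $b_1$ and $b_2$ are adjacent). A preliminary observation, using property (P1) and the absence of boundary vertices between $b_1$ and $b_2$, is that every such strand $s$ must enter and exit $R$ through $\ell_1 \cup \ell_2$ and hence crosses each of $\ell_1, \ell_2$ exactly once inside $R$, say at $q_1 \in \ell_1$ and $q_2 \in \ell_2$. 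The base case $N(D) = 0$ is immediate: with no strand crossings inside $R$, the arcs of $\ell_1$ and $\ell_2$ meeting at $v$ are each single edges, and so $v$ is the required common neighbor of $b_1$ and $b_2$.

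For the inductive step I pick the strand $s$ whose crossing $q_1$ with $\ell_1$ is closest to $v$ along $\ell_1$, and consider the triangular region $T$ with corners $v, q_1, q_2$ and sides along sub-arcs of $\ell_1, \ell_2, s$. The key claim is that $T$ has no other $\trip_2$-strand crossings inside. Indeed, if $s' \ne s$ is another strand meeting $R$ and intersecting the interior of $T$, then minimality of $q_1$ forces the crossing $s' \cap \ell_1$ to lie outside $T$, so $s'$ must enter $T$ through $\ell_2$; by (P1) it cannot re-cross $\ell_2$ or cross $\ell_1$ a second time, so it must exit $T$ through $s$, producing four pairwise intersecting $\trip_2$-strands $\{\ell_1, \ell_2, s, s'\}$ and violating (P3). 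Property (P2) therefore applies to $T$, so a Yang--Baxter move is legal there, yielding $D' \sim D$ which remains well-oriented by \Cref{thm:six_vertex}. The move is supported on $T$, so every crossing outside $T$ survives; inside $T$ the three pairwise intersections are pushed to the opposite side, so in particular the new intersection $v'$ of $\ell_1$ and $\ell_2$ lies on the side of $s$ opposite to $v$, i.e., on the $b_1 b_2$ side. The region $R'$ built from $D'$ in the same way as $R$ was built from $D$ therefore no longer contains $s$ but still contains every other strand of $R$, giving $N(D') = N(D) - 1$ and closing the induction.

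The step I expect to require the most care is the final geometric claim: that the Yang--Baxter move at $T$ moves the intersection of $\ell_1 \cap \ell_2$ across $s$ toward the boundary rather than away from it. This should follow directly from the cyclic orientation guaranteed by (P2) together with the explicit Yang--Baxter picture, since only one of the two possible move orientations is compatible with the given cyclic orientation of $T$, but it is the one genuinely geometric (rather than purely combinatorial) ingredient and deserves to be drawn carefully.
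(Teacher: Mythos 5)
Your proof is correct and follows essentially the same route as the paper's: both arguments induct on the amount of crossing inside the region cut off by $\ell_1$ and $\ell_2$ (your $N$ is exactly half of the paper's $n_1+n_2$), select the strand whose crossing with $\ell_1$ is closest to the intersection point, use (P1) and (P3) to show the resulting triangle is an empty face, and apply a Yang--Baxter move there to shrink the region. Your closing worry is moot: the Yang--Baxter move deterministically transports the crossing $\ell_1\cap\ell_2$ across $s$, so it necessarily moves toward the boundary arc.
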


\begin{proof}

    Let $q$ be the (unique) point of intersection of $\ell_1$ and $\ell_2$. Let $n_1$ be the number of vertices on $\ell_1$ strictly between $b_1$ \& $q$, and $n_2$ the number of vertices on $\ell_2$ strictly between $b_2$ \& $q$. We induct on $n = n_1 + n_2$.

    If $n = 0$, $q$ is a common neighbor of $b_1,b_2$ and we are done. So assume without loss of generality that $n_1\ge 1$. Let $q_1$ be the vertex on $\ell_1$ closest to $q$ in the direction of $b_1$. Since $b_1$ and $b_2$ are adjacent, the $\trip_2$-strand $\ell$ through $q_1$ intersects $\ell_2$ in a vertex $q_2$ between $b_2$ and $q$. We claim now that $q_2$ is adjacent to $q$. 

    Suppose for the sake of contradiction that there exists a vertex $q'$ on $\ell_2$ strictly between $q_2$ and $q$, and let $\ell'$ be the $\trip_2$-strand through $q'$. Since $D$ is fully reduced, there must be a pair of non-intersecting $\trip_2$ strands among $\ell_1,\ell_2,\ell,\ell'$. But $\ell_1,\ell_2,\ell$ form a triangle and $\ell'$ intersects $\ell_2$, so the non-intersecting pair must either be $\{\ell', \ell_1\}$ or $\{\ell', \ell\}$. If $\ell'$ does not intersect $\ell_1$, then it must end in a boundary vertex strictly between $b_1$ and $b_2$ which is impossible. On the other hand, if $\ell'$ does not intersect $\ell$, then it must intersect $\ell_1$ in a vertex strictly between $q$ and $q_1$, contradicting the choice of $q_1$. In either case, we arrive at a contradiction, and hence there are no vertices strictly between $q$ and $q_2$. Thus, $q, q_1, q_2$ form a small triangle in $D$ and using a Yang--Baxter move, we can slide $\ell$ past $q$ to obtain $D'$ and decrease $n$ by $2$ in the process. 
\end{proof}

We finally deal with the case when $\ell_1$ and $\ell_2$ do not intersect. We first introduce \emph{ladders}.

\begin{definition}
    Let $b_1,b_2,\ell_1,\ell_2$ be as above. We say that $D$ has a \emph{ladder} between $b_1$ and $b_2$ if no $\trip_2$-strand intersecting both $\ell_1, \ell_2$ has any vertices strictly between $\ell_1$ and $\ell_2$.

    If $D$ has a ladder between $b_1,b_2$ and $\ell^{(1)},\dots, \ell^{(k)}$ are the $\trip_2$-strands intersecting both $\ell_1$ and $\ell_2$ with $q_i^{(j)} = \ell_i\cap \ell^{(j)}$ for $i\in\{1,2\}, j\in [k]$, ordered so that $b_1,q_1^{(1)}, \dots, q_1^{(k)}$ appear in that order on $\ell_1$, then $b_1,q_1^{(1)}, \dots, q_1^{(k)}$ are \textit{consecutive} vertices on $\ell_1$, and $b_2,q_2^{(1)},\dots, q_2^{(k)}$ are consecutive vertices on $\ell_2$. In other words, for each $j\in[k-1]$ the vertices $q_1^{(j)}, q_1^{(j+1)}, q_2^{(j+1)}, q_2^{(j)}$ form a square, and $b_1, q_1^{(1)}, q_2^{(1)}, b_2$ form three edges of a square. We call the edges joining $q_1^{(j)}, q_2^{(j)}$ the \emph{rungs} of the ladder. 
\end{definition}

For future reference we prove the following lemma, which is a generalization of the Yang--Baxter move to slightly larger triangles:

\begin{lemma}
    \label{lem:triangle} 
    Let $\ell_1,\ell_2,\ell_3$ be $\trip_2$-strands of a well-oriented symmetrized six-vertex configuration $D$ forming a triangle $\triangle$. Assume further that the side of $\triangle$ along $\ell_1$ has length $1$, i.e., there are no vertices on $\ell_1$ strictly between $\ell_2,\ell_3$. Then the side lengths of $\triangle$ along $\ell_2$ and $\ell_3$ are the same. Moreover, after a sequence of Yang--Baxter moves sliding $\ell_3$ across, the $\trip_2$-strands passing through $\triangle$ can be made to have no vertices in the interior of $\triangle$.
\end{lemma}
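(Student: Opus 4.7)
The plan has two parts: equality of side lengths by a direct counting argument, and clearing the interior via iterated Yang--Baxter moves after establishing a ladder structure.

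Write $A = \ell_2 \cap \ell_3$, $B = \ell_1 \cap \ell_3$, $C = \ell_1 \cap \ell_2$ for the corners of $\triangle$. Each interior vertex on the $\ell_2$-side $AC$ is a crossing with some $\trip_2$-strand $\ell'$ that thereby enters $\triangle$. By (P1), $\ell'$ cannot re-cross $\ell_2$, and by hypothesis there are no interior vertices on side $BC$, so $\ell'$ cannot exit through $\ell_1$; hence $\ell'$ must exit through the $\ell_3$-side $AB$. This sets up a bijection between the interior vertices on sides $AC$ and $AB$, proving equality of side lengths.

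For the ``moreover'' clause, I would first use (P3) to show that the strands $\ell^{(1)},\dots,\ell^{(k)}$ passing through $\triangle$ form a ladder. Applied to $\{\ell_1,\ell_2,\ell_3,\ell^{(i)}\}$, (P3) forces each $\ell^{(i)}$ never to cross $\ell_1$; applied to $\{\ell_2,\ell_3,\ell^{(i)},\ell^{(j)}\}$, it forces $\ell^{(i)}$ and $\ell^{(j)}$ not to intersect. Ordering the rungs by proximity to $A$ on $\ell_2$ and writing $q_j^{(i)} = \ell^{(i)} \cap \ell_j$, the sub-triangle with corners $A, q_2^{(1)}, q_3^{(1)}$ has all three sides of length $1$: the outer sides on $\ell_2$ and $\ell_3$ cannot carry a rung closer to $A$ than $\ell^{(1)}$ by our ordering, and the inner side on $\ell^{(1)}$ between $q_2^{(1)}$ and $q_3^{(1)}$ cannot carry an intersection with $\ell_1$ or another $\ell^{(j)}$ by the ladder property.

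A Yang--Baxter move is therefore available at this corner, which I would interpret as sliding $\ell_3$ across the intersection $q_2^{(1)} = \ell_2 \cap \ell^{(1)}$. The resulting triangle $\triangle'$ still has its $\ell_1$-side of length $1$ (the move is local and disjoint from $\ell_1$), but the new corner $A' = \ell_2 \cap \ell_3$ has moved past $\ell^{(1)}$, so $\ell^{(1)}$ no longer passes through $\triangle'$. Iterating $k$ times empties the interior. The main step I expect to require care is verifying explicitly that the Yang--Baxter move, reinterpreted as sliding $\ell_3$, indeed removes precisely one rung and preserves the hypothesis of the lemma; this should follow from the local combinatorics of the move but deserves a clean argument so that the induction closes.
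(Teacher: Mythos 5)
Your proposal is correct and follows essentially the same route as the paper: the side lengths are equated by showing every strand entering through the $\ell_2$-side must exit through the $\ell_3$-side (the paper phrases this via (P3) as each transversal being ``parallel to $\ell_1$''), and the interior is then cleared by Yang--Baxter moves sliding $\ell_3$ past the crossings one at a time. Your verification that an empty, cyclically oriented corner triangle is available at each step is in fact more detailed than the paper's one-sentence treatment of this point.
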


\begin{proof}
    Let $\ell$ be any $\trip_2$-strand that passes through $\triangle$. Since $D$ is well oriented, $\ell$ must be parallel to one of $\ell_1,\ell_2,\ell_3$. If $\ell$ is parallel to either $\ell_2$ or $\ell_3$, then it must intersect $\ell_1$ within $\triangle$, which is impossible since the side of $\triangle$ along $\ell_1$ has length $1$. Thus, $\ell$ must be parallel to $\ell_1$. This shows that the $\trip_2$-strands passing through $\triangle$ give a bijection between the vertices on $\ell_2$ strictly between $\ell_1$ and $\ell_3$, and the vertices on $\ell_3$ strictly between $\ell_1$ and $\ell_2$. That is, the side lengths of $\triangle$ along $\ell_2$ and $\ell_3$ are the same. Now $\ell_3$ can be slid past the crossings one at a time to ensure there are no vertices in the interior of $\triangle$, see \Cref{fig:bigtriangle}.

    \begin{figure}[h]
        \centering
        \includegraphics[width=0.5\linewidth]{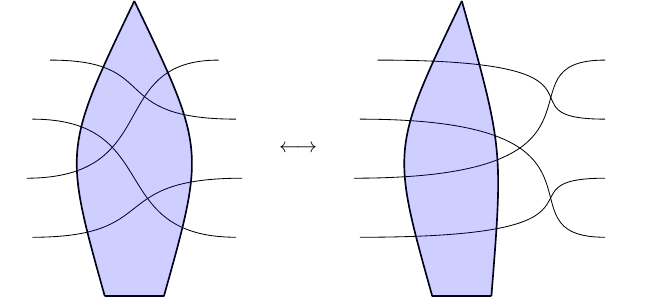}
        \caption{Sliding the right edge of the triangle past all the crossings in the interior.}
        \label{fig:bigtriangle}
    \end{figure}

\end{proof}

\begin{lemma}\label{lem:ladder}
    
    If $\ell_1$ and $\ell_2$ do not intersect, then there exists $D'\sim D$ via Yang--Baxter moves, such that $D'$ has a ladder between $b_1,b_2$.
\end{lemma}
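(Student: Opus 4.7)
The plan is to induct on $N(D)$, the number of vertices of $D$ lying in the open region $R$ bounded by $\ell_1$, $\ell_2$, and the arc of the disk boundary from $b_1$ to $b_2$ (which contains no vertices, since $b_1$ and $b_2$ are adjacent). Each inductive step will apply \Cref{lem:triangle} followed by a single Yang--Baxter move to strictly decrease $N(D)$, eventually producing the desired ladder. As a preliminary I would show that any $\trip_2$-strand $\ell$ whose interior meets $R$ must cross both $\ell_1$ and $\ell_2$ exactly once: $\ell$ can enter and exit $R$ only through $\ell_1 \cup \ell_2$, and (P1) forbids $\ell$ from double-crossing either. Enumerate these strands as $\ell^{(1)},\dots,\ell^{(k)}$ with $q_1^{(j)} = \ell_1 \cap \ell^{(j)}$ and $q_2^{(j)} = \ell_2 \cap \ell^{(j)}$. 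If $N(D) = 0$, then no $\ell^{(j)}$ has a vertex between $\ell_1$ and $\ell_2$, so $D$ already satisfies the definition of a ladder.

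For the inductive step with $N(D) \geq 1$, every vertex of $R$ is an intersection of two of the $\ell^{(j)}$. Among all intersecting pairs $(\ell^{(i)}, \ell^{(j)})$ meeting at some $v \in R$, I would choose one minimizing the length of the segment of $\ell_1$ from $q_1^{(i)}$ to $q_1^{(j)}$. The key observation is that $q_1^{(i)}$ and $q_1^{(j)}$ must then be adjacent on $\ell_1$: otherwise an intermediate crossing $q_1^{(k)}$ forces $\ell^{(k)}$ into the triangle $\triangle$ with sides along $\ell_1$, $\ell^{(i)}$, $\ell^{(j)}$, and since (P1) prevents $\ell^{(k)}$ from re-exiting via $\ell_1$, it must cross $\ell^{(i)}$ or $\ell^{(j)}$ inside $R$, giving a closer intersecting pair and contradicting minimality. \Cref{lem:triangle} then applies to $\triangle$ (whose $\ell_1$-side has length $1$): a sequence of Yang--Baxter moves sliding $\ell^{(j)}$ empties the interior of $\triangle$, after which $\triangle$ is a small triangle, and one further Yang--Baxter move flips it, moving the intersection of $\ell^{(i)}, \ell^{(j)}$ across $\ell_1$ and hence out of $R$.

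The main obstacle is book-keeping the count $N(D)$ through these moves. Interior vertices of $\triangle$ that are relocated by \Cref{lem:triangle} may remain in $R$ or may exit through $\ell_1$ or $\ell_2$, but no new vertex is ever introduced into $R$ during these local moves; the final flip then removes $v$ from $R$ without affecting any other vertex of $D$, so $N(D)$ strictly decreases. Well-orientedness is preserved at every step (Yang--Baxter moves being valid moves on $\WSSV(\underline o)$), so (P1)--(P3) continue to hold and the hypothesis on $\ell_1, \ell_2$ not intersecting persists, keeping \Cref{lem:triangle} applicable at each iteration. Iterating yields a $D' \sim D$ via Yang--Baxter moves with $N(D') = 0$, which is a ladder between $b_1$ and $b_2$.
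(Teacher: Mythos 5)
Your overall strategy (induct on a vertex count, clear vertices via \Cref{lem:triangle} plus one final Yang--Baxter move) is in the same spirit as the paper's, but the region $R$ you induct over is not bounded by $\ell_1$, $\ell_2$, and the arc from $b_1$ to $b_2$ alone. Since $\ell_1$ and $\ell_2$ are disjoint arcs terminating at two \emph{other} boundary vertices $b_1'$, $b_2'$, the boundary of $R$ also contains the circular arc from $b_2'$ to $b_1'$, which may carry arbitrarily many boundary vertices. This kills your preliminary claim that every $\trip_2$-strand meeting $R$ crosses both $\ell_1$ and $\ell_2$: a strand may begin or end at a vertex of that far arc and cross only one of $\ell_1,\ell_2$, or neither. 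Consequently (i) not every vertex of $R$ is an intersection of two of the $\ell^{(j)}$, so your minimality argument for the adjacency of $q_1^{(i)}$ and $q_1^{(j)}$ on $\ell_1$ fails (the intermediate crossing on $\ell_1$ could belong to a strand that never reaches $\ell_2$, yielding no ``closer intersecting pair''), and your procedure never removes the vertices such strands create on the $\ell^{(j)}$ between $\ell_1$ and $\ell_2$ --- exactly the vertices the ladder condition forbids; and (ii) the base case $N(D')=0$ is in general unattainable: two strands joining boundary vertices of the far arc that cross each other contribute a vertex of $R$ that no sequence of moves can eliminate, since moves preserve which pairs of $\trip_2$-strands intersect. (Note also that $N(D)=0$ is strictly stronger than the ladder condition, which only constrains strands crossing \emph{both} $\ell_1$ and $\ell_2$.)

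The repair is essentially the paper's argument: order the strands $\ell^{(1)},\dots,\ell^{(k)}$ crossing both $\ell_1$ and $\ell_2$ along $\ell_1$ and clear them one at a time, inducting on the number of vertices of $\ell^{(i)}$ strictly between $\ell_1$ and $\ell_2$. For such a vertex $q$, one works inside the region bounded by $\ell^{(i)}$, $\ell_1$, $\ell_2$ and the \emph{near} arc from $b_1$ to $b_2$; this region genuinely has no boundary vertices on its circular side, so the strand through $q$ must exit through $\ell_1$ or $\ell_2$, and one then locates a triangle whose length-one side lies on $\ell_1$ or $\ell_2$ (using the inductive hypothesis on $\ell^{(1)},\dots,\ell^{(i-1)}$ to rule out the alternative) before invoking \Cref{lem:triangle}.
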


\begin{proof}
    For the sake of convenience, draw $D$ so that the boundary vertices $b_1, b_2$ are at the top of the page and the strands $\ell_1,\ell_2$ go straight down. Order the $\trip_2$ strands $\ell^{(1)},\dots, \ell^{(k)}$ that intersect both $\ell_1,\ell_2$ based on distance of $\ell^{(i)}\cap \ell_1$ from $b_1$ so that $\ell^{(1)}\cap \ell_1$ is closer to $b_1$ than $\ell^{(k)}\cap \ell_1$. 
    
    Assume that $\ell^{(1)},\dots, \ell^{(i-1)}$ have no vertices strictly between $\ell_1,\ell_2$. We shall show that after a suitable sequence of Yang--Baxter moves, neither does $\ell^{(i)}$. We do so by inducting on the number $n$ of vertices on $\ell^{(i)}$ strictly between $\ell_1,\ell_2$. If $n=0$ there is nothing to prove, so assume $n>0$. Pick any vertex $q$ on $\ell^{(i)}$ strictly between $\ell_1,\ell_2$, and let $\ell$ be the $\trip_2$ strand through $q$. Since $\ell$ is transverse to $\ell^{(i)}$ and cannot double cross it, it must exit the region above $\ell^{(i)}$ and between $\ell_1,\ell_2$ through either $\ell_1$ or $\ell_2$. Let $\varepsilon\in\{1,2\}$ be such that $\ell$ intersects $\ell_\varepsilon$. Now $\ell$ intersects $\ell_\varepsilon$ above $\ell_\varepsilon\cap \ell^{(i)}$, so that $\ell,\ell_\varepsilon,\ell^{(i)}$ form a triangle. Let $q'$ be the vertex on $\ell_\varepsilon$ immediately above $\ell_\varepsilon\cap\ell^{(i)}$, and let $\ell'$ be the $\trip_2$ strand through $q'$. Then $\ell'$ must be parallel to either $\ell^{(i)}$ or $\ell$. If $\ell'$ is parallel to $\ell^{(i)}$, then $\ell'$ is a $\trip_2$ strand strictly above $\ell^{(i)}$ which intersects both $\ell_1,\ell_2$. But then $\ell' = \ell^{(j)}$ for some $j<i$ and $\ell'\cap \ell$ is strictly between $\ell_1,\ell_2$, contradicting our assumption.

    Thus $\ell'$ is parallel to $\ell$. Now $\ell', \ell_\varepsilon,\ell^{(i)}$ form a triangle $\triangle$ with the side along $\ell_\varepsilon$ having length $1$. Thus we may apply \Cref{lem:triangle} to first slide $\ell'$ towards $\ell^{(i)}$ and then slide all the strands passing through $\triangle$ past $q'$. Note that this does not change $n$. Finally we are left with the small triangle bounded by $\ell_\varepsilon,\ell',\ell^{(i)}$ having all sides of length $1$, applying a Yang--Baxter move we can pass the strand $\ell'$ past $\ell_\varepsilon\cap \ell^{(i)}$. This decreases $n$ by $1$. By induction, we can apply a sequence of Yang--Baxter moves to arrive at a symmetrized six-vertex configuration where $\ell^{(i)}$ has no vertices strictly between $\ell_1,\ell_2$. Continuing, we arrive at a configuration such that for each $i\in[k]$, $\ell^{(i)}$ has no vertices between $\ell_1,\ell_2$.
\end{proof}

Configurations with ladders carry different $\trip$-theoretic data based on how the rungs of the ladder are oriented. This prompts us to distinguish between two types of ladders.

\begin{definition}
    Let $D\in \WSSV(\underline a)$ with $b_1,b_2, \ell_1, \ell_2$ be as above and suppose $D$ has a ladder between $b_1, b_2$. Let $\ell^{(j)}, j\in[k]$ be the (possibly empty) collection of $\trip_2$-strands intersecting both $\ell_1$ and $\ell_2$, ordered so that $b_1,q_1^{(1)} = \ell^{(1)}\cap \ell_1,\dots, q_1^{(k)} = \ell^{(k)}\cap \ell_1$ appear in order.

    We say that the ladder is \emph{oriented} if for each $j\in[k]$, the edge is oriented $q_1^{(j)}\to q_2^{(j)}$ (by convention the ladder is oriented if $k=0$). Else the ladder is \emph{non-oriented}.
\end{definition}

\begin{figure}[h]
    \centering
    \includegraphics[height = 6cm]{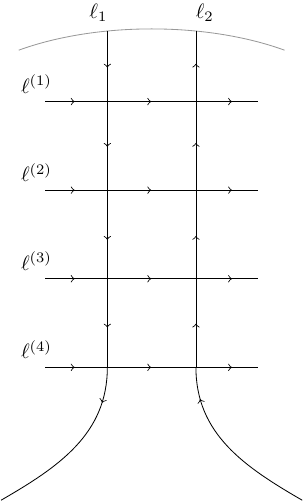}
    \caption{An oriented ladder with four rungs.}
    \label{fig:oriented_ladder}
\end{figure}

\begin{lemma}\label{lem:ladder_nonoriented}
    Let $b_1,b_2$ be adjacent vertices of opposite orientation in $D\in \WSSV(G)$ with nonintersecting $\trip_2$-strands $\ell_1,\ell_2$, and suppose $D$ has a non-oriented ladder between $b_1, b_2$. Let $q_i^{(j)}$ for $i\in\{1,2\}, j\in [k]$ be as before. Then there is a $D'\sim D$ via ASM moves, such that $b_1\to q_1^{(1)}, q_2^{(1)}\to q_1^{(1)}, q_2^{(1)} \to b_2$ are oriented edges in $D'$.
\end{lemma}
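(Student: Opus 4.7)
The plan is to use ASM moves on the squares of the ladder to bubble an incorrectly-oriented rung up to the top. First, observe that the edges $b_1 \to q_1^{(1)}$ and $q_2^{(1)} \to b_2$ required by the statement are automatic from the hypothesis: they are forced by the orientation of $\ell_1$ away from $b_1$ and of $\ell_2$ toward $b_2$ (which uses that $b_1, b_2$ have opposite boundary orientation). Moreover, these two edges lie outside every ladder square, so they are preserved by any ASM move performed within the ladder. It therefore suffices to arrange the top rung to be oriented $q_2^{(1)} \to q_1^{(1)}$.

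Let $j^\ast \in [k]$ be the smallest index such that rung $j^\ast$ is oriented $q_2^{(j^\ast)} \to q_1^{(j^\ast)}$; such a $j^\ast$ exists by the assumption that the ladder is non-oriented. The case $j^\ast = 1$ is immediate. Otherwise, I would consider the square face of the ladder bounded by rungs $j^\ast-1$ and $j^\ast$. By minimality of $j^\ast$, rung $j^\ast-1$ has the opposite orientation $q_1^{(j^\ast-1)} \to q_2^{(j^\ast-1)}$. Together with the orientation of the $\ell_1$-segment (downward on the left side of the square) and of the $\ell_2$-segment (upward on the right side), the four corners of this square form a ``diagonal source-sink'' pattern: $q_1^{(j^\ast-1)}$ and $q_2^{(j^\ast)}$ each have both square-edges outgoing, while $q_2^{(j^\ast-1)}$ and $q_1^{(j^\ast)}$ each have both square-edges incoming. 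This is precisely the local pattern on which an ASM move applies, and its effect is to swap the two rung orientations (also flipping the two square-sides along $\ell_1, \ell_2$) while leaving untouched the external edges of the square. In the new configuration rung $j^\ast-1$ is wrongly oriented and rung $j^\ast$ is correctly oriented, with all other rungs and all edges external to the square unchanged. Iterating this step at most $j^\ast - 1$ times produces the desired $D'$.

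The main obstacle is justifying the precise effect of the ASM move in the inductive step: one must verify that the ``diagonal source-sink'' pattern described above is an instance of the ASM move of the preliminaries, identify the unique valid configuration on the square compatible with the fixed external edge orientations after the move, and check that the resulting configuration is still well-oriented. The last point is immediate since ASM moves preserve well-orientedness by definition, and the first two reduce to a short case analysis of the transmitting-vertex types at the four corners before and after the move.
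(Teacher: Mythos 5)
Your proof is correct and follows the same strategy as the paper's: locate the minimal rung oriented $q_2^{(j)}\to q_1^{(j)}$, check that the square between it and the rung above admits an ASM move (using the forced orientations of the $\ell_1$- and $\ell_2$-sides), and iterate to propagate the reversed rung up to position $1$. The only difference is presentational — you make explicit the local source/sink pattern at the four corners and the automaticity of the boundary edges, where the paper invokes ``an easy induction argument'' for the same facts.
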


\begin{proof}
    For simplicity, assume that the orientations at the boundary are $b_1\to q_1^{(1)}$ and $q_2^{(1)}\to b_2$.
    Let $j_0$ be the smallest $j$ such that $q_2^{(j)}\to q_1^{(j)}$ is oriented. If $j_0=1$ there is nothing to prove.

    Assume $j_0>1$. We claim that we may apply an ASM move to the square with vertices $q_1^{(j_0-1)}, q_1^{(j_0)}, q_2^{(j_0)}, q_2^{(j_0-1)}$. To see this, the minimality of $j_0$ implies (by an easy induction argument) that each $1\le j<j_0$, $q_1^{(j)}\to q_2^{(j)}, q_1^{(j)}\to q_1^{(j+1)}, q_2^{(j+1)}\to q_2^{(j)}$ are oriented. Applying this to $j= j_0-1$ along with the fact that $q_2^{(j_0)}\to q_1^{(j_0)}$ is oriented ensures that the desired ASM move is applicable. Applying the ASM move gives $D'$ where $q_2^{(j_0-1)}\to q_1^{(j_0-1)}$ is oriented. Repeating this argument gives us a configuration with $q_2^{(1)}\to q_1^{(1)}$ oriented.
\end{proof}

\begin{remark}\label{rem:colorswap}
    If we swap the colors of $b_1$ and $b_2$, i.e., stipulate that $\ell_1$ is oriented towards $b_1$ and $\ell_2$ is oriented away from $b_2$, all the lemmas of this subsection still hold after reversing the orientations of all edges.
\end{remark}

\subsection{Special configurations and $\wt\swap$}

Once again, fix boundary vertices $b_1, b_2$ of $D\in \WSSV(\underline a)$ with $a_1 \ne a_2$. In this subsection, we construct a map $\wt\swap: \mathcal S(\underline a) \to \mathcal S(\underline a')$ from certain well-behaved symmetrized six-vertex configurations $D$ with boundary conditions $\underline a$, where
\[a_i' = \begin{cases}
        a_2 & \text{if } i =1,\\
        a_1 & \text{if } i = 2,\\
        a_i & \text{otherwise}.
    \end{cases}\]

\begin{definition}
    A symmetrized six-vertex configuration $D\in \WSSV(\underline a)$ is said to be \emph{special} if it satisfies one of the following properties:
    \begin{enumerate}
    \setcounter{enumi}{-1}
        \item $b_1$ and $b_2$ are connected by a simple edge, or
        \item $b_1$ and $b_2$ have a common neighbor, or
        \item $D$ has an oriented ladder between $b_1$ and $b_2$, or
        \item if $q_1, q_2$ are the neighbors of $b_1, b_2$ respectively, then the edges of the path $b_1- q_1- q_2 - b_2$ have alternating orientations in $D$.
    \end{enumerate}
    The set of special configurations with boundary conditions $\underline a$ is denoted by $\mathcal S(\underline a)\subset \WSSV(\underline a).$ Furthermore, we say that $D\in \mathcal S_i(\underline a)$ for $i\in \{0,1,2,3\}$ if it satisfies condition $(i)$ from the list above, so that $\mathcal S(\underline a) = \mathcal S_0(\underline a) \sqcup \mathcal S_1(\underline a)\sqcup \mathcal S_2(\underline a)\sqcup \mathcal S_3(\underline a)$.
\end{definition}

\begin{figure*}[h!]
    \centering
    \begin{subfigure}[h]{0.3\textwidth}
        \centering
        \includegraphics[width=0.4\textwidth]{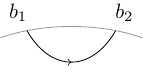}
    \end{subfigure}%
    ~
    \begin{subfigure}[h]{0.3\textwidth}
        \centering
        \includegraphics[width=0.6\textwidth]{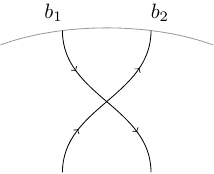}
    \end{subfigure}%
    
    \begin{subfigure}[h]{0.3\textwidth}
        \centering        
        \includegraphics[width=0.7\textwidth]{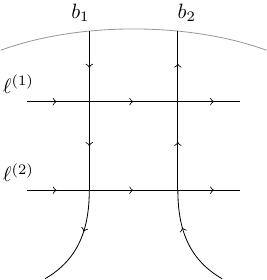}
    \end{subfigure}
    ~ 
    \begin{subfigure}[h]{0.3\textwidth}
        \centering        
        \includegraphics[width=0.7\textwidth]{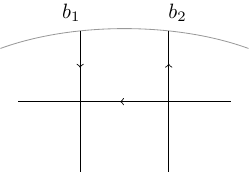}
    \end{subfigure}
    \caption{Special configurations of types (0) and (1) in the first row, and of types (2) and (3) in the second row, from left to right. Arrow reversals are also allowed.}
\end{figure*}

We are now ready to define $\wt\swap$ on the set of special configurations.

\begin{definition}
    Suppose $D\in \mathcal S(\underline a)$. Then $\wt\swap(D)$ is defined as follows:
    \begin{enumerate}
    \setcounter{enumi}{-1}
    \item If $D\in \mathcal S_0(\underline a)$, then $\wt\swap(D)$ is defined by reversing the orientation of the edge joining $b_1, b_2$.
    \item If $D\in \mathcal S_1(\underline a)$, then $\wt\swap(D)$ is defined by swapping the positions of $b_1,b_2$ in $D$ by ``undoing'' the crossing at the common neighbor $q$.

    \item If $D\in \mathcal S_2(\underline a)$, then $\wt\swap(D)$ is defined by swapping the positions of $b_1,b_2$ in $D$ by introducing a crossing between $\ell_1,\ell_2$ with the added vertex $q$ adjacent to both $b_1,b_2$.
    
    \item If $D\in \mathcal S_3(\underline a)$, then $\wt\swap(D)$ is the configuration obtained by reversing the orientations of the edges in the path $b_1-q_1- q_2- b_2$ in $D$. 
\end{enumerate}
\end{definition}

We first check that the resultant configurations are indeed well oriented. Since $\wt\swap(D)$ only introduces an extra crossing of $\trip_2$-strands $\ell_1,\ell_2$ in the case (2) wherein they do not intersect in $D$, $\wt\swap(D)$ satisfies (P1) because $D$ does. Similarly, $\wt\swap(D)$ introduces new big triangles only in case (2) wherein the ladder is oriented, so the resultant big triangles must also be oriented, showing that (P2) is also satisfied by $\wt\swap(D)$. Furthermore, it is easy to check from the definition that
\begin{enumerate}\setcounter{enumi}{-1}
    \item $D\in \mathcal S_0(\underline a)\implies \wt\swap(D)\in \mathcal S_0(\underline a')$,
    \item $D\in \mathcal S_1(\underline a) \implies \wt\swap(D)\in \mathcal S_2(\underline a')$,
    \item $D\in \mathcal S_2(\underline a)\implies \wt\swap (D)\in \mathcal S_1(\underline a')$,
    \item $D\in \mathcal S_3(\underline a)\implies \wt\swap(D)\in \mathcal S_3(\underline a')$.
\end{enumerate}
Lastly, a simple case check shows that $\wt\swap \o\wt\swap = \mathrm{id}_{\mathcal S(\underline a)}$, so $\wt\swap$ is an involution.

\begin{lemma}\label{lem:special}
    For any $D\in \WSSV(\underline a)$, there exists a $D'\sim D$ such that $D'\in \mathcal S(\underline a)$.
\end{lemma}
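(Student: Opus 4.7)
The plan is to argue by case analysis on the relative position of the two $\trip_2$-strands $\ell_1$ and $\ell_2$ through $b_1$ and $b_2$, invoking Lemmas~\ref{lem:singlestrand}, \ref{lem:crossing}, \ref{lem:ladder}, and \ref{lem:ladder_nonoriented} in turn. Because $a_1\neq a_2$ and the type is oscillating, $b_1$ and $b_2$ have opposite colors, so by \Cref{rem:colorswap} we may assume $\ell_1$ is oriented away from $b_1$ and $\ell_2$ is oriented towards $b_2$.

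First I would split into three cases matching the trichotomy that precedes \Cref{lem:singlestrand}. If $\ell_1 = \ell_2$, \Cref{lem:singlestrand} immediately places $D$ in $\mathcal{S}_0(\underline{a})$ with no moves required. If $\ell_1 \neq \ell_2$ intersect, then \Cref{lem:crossing} supplies a Yang--Baxter-equivalent $D' \sim D$ in which $b_1$ and $b_2$ share a common neighbor, so $D' \in \mathcal{S}_1(\underline{a})$. The remaining case, in which $\ell_1$ and $\ell_2$ do not intersect, is the substantive one: \Cref{lem:ladder} produces a Yang--Baxter-equivalent configuration $D'$ possessing a ladder between $b_1$ and $b_2$.

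Having obtained a ladder, I would branch on whether it is oriented. If it is (including the trivial case of zero rungs), then $D' \in \mathcal{S}_2(\underline{a})$ by definition. Otherwise, \Cref{lem:ladder_nonoriented} produces $D'' \sim D'$ via ASM moves in which the edges $b_1 \to q_1^{(1)}$, $q_2^{(1)} \to q_1^{(1)}$, and $q_2^{(1)} \to b_2$ are all oriented as shown. Setting $q_1 := q_1^{(1)}$ and $q_2 := q_2^{(1)}$, the path $b_1 - q_1 - q_2 - b_2$ has orientations that alternate in the sense of condition (3) (out of $b_1$, out of $q_2$, with the middle edge pointing backwards into $q_1$), so $D'' \in \mathcal{S}_3(\underline{a})$.

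The only real obstacle I anticipate is bookkeeping the orientation conventions: one must verify that the orientations produced by \Cref{lem:ladder_nonoriented} really match the alternating pattern defining $\mathcal{S}_3(\underline{a})$, and also handle the mirror case where $b_1$ is white and $b_2$ is black by appealing to \Cref{rem:colorswap} to reverse every orientation in the argument above. Once those are settled, the four cases $\mathcal{S}_0, \mathcal{S}_1, \mathcal{S}_2, \mathcal{S}_3$ together cover every $D \in \WSSV(\underline{a})$, completing the proof.
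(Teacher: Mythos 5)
Your proposal is correct and follows essentially the same route as the paper's own proof: the same trichotomy on $\ell_1,\ell_2$, invoking \Cref{lem:singlestrand}, \Cref{lem:crossing}, \Cref{lem:ladder}, and \Cref{lem:ladder_nonoriented} in exactly the same way, with the final branch on whether the ladder is oriented. The extra care you take with orientation conventions and the color-reversed case is handled in the paper by \Cref{rem:colorswap}, just as you suggest.
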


\begin{proof}
    Let $b_1, b_2, \ell_1, \ell_2$ in $D$ be as before. We have three cases:
    \begin{enumerate}[leftmargin=*]\setcounter{enumi}{-1}
        \item \emph{Case 0:} $\ell_1=\ell_2$. Then by \Cref{lem:singlestrand}, $b_1$ and $b_2$ are connected by a simple edge in $D$, so that $D\in \mathcal S_0(\underline a)$.
        \item \emph{Case 1:} $\ell_1\ne\ell_2$ intersect. Then by \Cref{lem:crossing} we can find a $D'\sim D$ such that $b_1, b_2$ have a common neighbor in $D'$ so that $D'\in \mathcal S_1(\underline a)$.

        \item \emph{Case 2:} $\ell_1, \ell_2$ do not intersect. Then by \Cref{lem:ladder} we can find a $D'\sim D$ such that $D'$ has a ladder between $b_1, b_2$. There are two subcases based on whether the ladder is oriented or not:
        \begin{enumerate}
            \item If the ladder in $D'$ is oriented, then $D'\in \mathcal S_2(\underline a).$
            \item If the ladder in $D'$ is non-oriented, then \Cref{lem:ladder_nonoriented} gives a $D''\sim D'$ such that $b_1 -q_1- q_2- b_2$ have alternating orientations in $D''$, ensuring that $D''\in \mathcal S_3(\underline a)$.
        \end{enumerate}
    \end{enumerate}
\end{proof}

Our next goal is to use $\wt\swap$ to define $\swap:\WSSV(\underline a)/\sim\ \to \WSSV(\underline a')/\sim$. Given any $D\in \WSSV(\underline a)$, \Cref{lem:special} guarantees that we can find a $D'\sim D$ such that $D'\in \mathcal S(\underline a)$. Thus, it is natural to try and define $\swap(D)$ to be the move-equivalence class of $\wt\swap(D')$. We check in the following subsection that this gives a well defined map: $\swap(D)$ is independent of the chosen special representative $D'\sim D$. 

\subsection{Sequences of moves}

Let $D$ be a well-oriented symmetrized six-vertex configuration and $W$ the corresponding fully reduced hourglass plabic graph. Denote by $S(D)$ the set of square faces of $D$ admitting ASM moves, and b $T(D)$ be the set of triangular faces of $D$. In the language of hourglass plabic graphs, $S(W)$ is the set of square faces of $W$ and $T(W)$ is the set of benzene faces of $D$.

If $F\in S(D)$ (resp. $F\in S(W)$), then let $s_F(D)\sim D$ (resp. $s_F(W)\sim W$) be obtained from $D$ via an ASM move at $F$ (resp. a square move at $F$). If $F\in T(D)$ (resp. $F\in T(W)$), then let $t_F(D)\sim D$ (resp. $t_F(W)\sim W$) be obtained from $D$ via a Yang--Baxter move at $F$ (resp. a benzene move at $F$). We will henceforth write $s_i$ instead of $s_{F_i}$ for $F_i\in S(D)$, and similarly $t_i$ instead of $t_{F_i}$ for $F_i\in T(D)$. 

\begin{definition}
    If $D\in \WSSV(\underline a)$, a \emph{sequence of moves on} $D$ is defined recursively as a word $w_1\cdots w_l$ such that:
    \begin{enumerate}
        \item $w_l\in \{s_F: F\in S(D)\}\sqcup \{t_F: F\in T(D)\}$, and
        \item $w_1\cdots w_{l-1}$ is a sequence of moves on $w_l(D)$.
    \end{enumerate}
    We say that $w_1\cdots w_l\sim w'_1\cdots w'_{l'}$ are \emph{equivalent} sequences of moves on $D$ if $w_1\cdots w_l(D) = w'_1\cdots w'_{l'}(D)$. For $W$ a fully reduced hourglass plabic graph, a \emph{sequence of moves on} $W$ is defined analogously.
\end{definition}

We want to study relations between different sequences of moves. We first record some simple equivalences between sequences of moves.

\begin{proposition}\label{prop:rel}
    Let $D\in \WSSV(\underline a)$ with faces $\{F_i\}_{i\in I}$. Then we have the following equivalences on sequences of moves on $D$:
    \begin{enumerate}
        \item If $F_i\in S(D), F_j\in T(s_i(D))$, then $F_j\in T(D), F_i\in S(t_j(D))$, and
        \[t_{j}s_{i} \sim s_{i}t_{j}.\]
        \item If $F_i,F_j\in S(D)$ are square faces that {do not share an edge}, then 
        \[s_i s_j\sim s_js_i.\]
        Similarly if $F_i,F_j\in T(D)$ are triangular faces that {do not share a vertex}, then 
        \[t_it_j\sim t_jt_i.\]
        \item If $F_i\in S(D), F_j\in T(D)$, then
        \[s_i^2 \sim 1, t_j^2\sim 1,\]
        where $1$ denotes the empty word.
    \end{enumerate}
\end{proposition}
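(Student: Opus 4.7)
The plan is to reduce each of the three claims to the local nature of the moves. An ASM move at a square face $F_i$ only modifies the orientations of the four boundary edges of $F_i$ together with the configuration at its internal vertex; a Yang--Baxter move at a triangular face $F_j$ only modifies the three boundary edges of $F_j$ and its internal vertex. The analogous statements hold for square and benzene moves on hourglass plabic graphs. With this locality observation in hand, each part reduces to a short verification.

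For (3), applying the same ASM (respectively Yang--Baxter) move twice returns the local data at $F_i$ (respectively $F_j$) to its original state, so $s_i^2 \sim 1$ and $t_j^2 \sim 1$ follow by direct inspection of \Cref{fig:square} and \Cref{fig:benzene} and the corresponding figures for symmetrized six-vertex configurations. For (2), two square faces $F_i, F_j \in S(D)$ sharing no edge have disjoint edge sets, so the ASM moves $s_i$ and $s_j$ act on disjoint portions of the combinatorial data and therefore commute. An identical argument, now using that two triangular faces sharing no vertex have disjoint closures (each triangle is determined by its three edges, and no edge is shared), gives $t_i t_j \sim t_j t_i$.

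For (1), the crux is to show that if $F_i \in S(D)$ and $F_j \in T(s_i(D))$, then the closures of $F_i$ and $F_j$ meet in at most a vertex. Suppose toward contradiction that $F_i$ and $F_j$ share a boundary edge $e$. The ASM move $s_i$ reverses the orientation of $e$ (as a boundary edge of the square $F_i$). Since $F_j$ is a triangular face of $s_i(D)$ movable by Yang--Baxter, the three edges of $F_j$ must be cyclically oriented in $s_i(D)$; reversing $e$ back shows that in $D$ these three edges are not cyclically oriented, so $F_j \notin T(D)$. But in $D$, the edge $e$ lies on the square $F_i$ and the other two edges of $F_j$ are unchanged, and a short case check on the valid six-vertex configurations at the endpoints of $e$ (sources, sinks, transmitting vertices) shows this is incompatible with $F_i \in S(D)$. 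Hence $F_i$ and $F_j$ share no edge, and the disjoint-support argument from (2) yields $F_j \in T(D)$, $F_i \in S(t_j(D))$, and $t_j s_i \sim s_i t_j$.

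The main potential obstacle is the case analysis in the last step of (1): one must rule out all ways in which a Yang--Baxter-movable triangle could share an edge with an ASM-movable square and still be movable after the ASM move. The analysis is short but requires care in checking the orientation constraints at each endpoint of the shared edge. Once locality of the moves is established and this case analysis is completed, all three statements of the proposition follow uniformly.
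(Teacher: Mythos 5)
Parts (2) and (3) of your argument are fine and essentially match the paper's proof (the paper handles the $t_it_j$ commutation by passing to the hourglass plabic graph, where ``no shared vertex'' in the six-vertex configuration becomes ``no shared edge'' in the web, but your direct disjoint-support argument works too). Your treatment of the shared-edge case in (1) is also correct, and can in fact be shortened: once you know the three edges of $F_j$ are not cyclically oriented in $D$, you already contradict well-orientedness of $D$ (every $3$-cycle in a well-oriented configuration is cyclically oriented), with no case check at the endpoints of $e$ needed.

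The genuine gap is in the last step of (1): you reduce to ``the closures of $F_i$ and $F_j$ meet in at most a vertex'' and then invoke the disjoint-support argument. But meeting in a vertex is \emph{not} disjoint support for a Yang--Baxter move. A Yang--Baxter move at the triangle $F_j$ reattaches the outward edges at the three corners of $F_j$ and flips the triangle to the other side; consequently every face that meets $F_j$ only in a vertex \emph{gains} a side (the new triangle's edge enters its boundary), while the edge-adjacent faces lose one. So if the square $F_i$ shared a vertex with $F_j$, then $F_i$ would become a pentagon in $t_j(D)$, the conclusion $F_i\in S(t_j(D))$ would be false, and $t_js_i \sim s_it_j$ would not even parse. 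You must therefore also rule out the shared-vertex case. The paper does this by translating to hourglass plabic graphs: a shared vertex of the six-vertex configuration that is not on a shared edge forces a web vertex lying on both the square face (contributing degree $2$) and the benzene face (contributing degree $3$), exceeding the degree-$4$ bound; and a shared edge is excluded by full reducedness (a benzene move would create a $4$-cycle with an hourglass edge). You need to supply an argument of this kind — or some other exclusion of the vertex-sharing configuration — before the commutation in (1) is established.
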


\begin{proof}
    \begin{enumerate}[leftmargin=*]
        \item This relation is most easily seen in the framework of webs. Suppose $W\in \CRG(\underline a), F_i\in S(W), F_j\in T(W)$. We note that $F_i$ and $F_j$ do not share a vertex. If $F_i, F_j$ share a vertex $v$ but not an edge, then the square face $F_i$ would contribute $2$ to $\deg(v)$ and the benzene face $F_j$ would contribute $3$ to $\deg(v)$, contradicting $\deg(v) = 4$. So $F_i, F_j$ must share a whole edge. But this then gives a square face $F_i$ adjacent to the benzene face $F_j$, and after a suitable benzene move at $F_j$ we get a square with an hourglass edge, contradicting the full reducedness of $W$. Now since $F_i, F_j$ do not share a vertex, it is clear that the actions of $s_i$ and $t_j$ commute.

        \item The move $s_i$ acts by just reversing the orientations of the edges of the square $F_i$. Thus if $F_i, F_j\in S(D)$ do not share an edge, it is clear that $s_is_j(D) = s_js_i(D)$. For the statement about $t_i$ and $t_j$ we convert to the language of webs. The faces $F_i, F_j\in T(D)$ not sharing a vertex imples that $F_i, F_j\in T(W)$ do not share an edge. It is clear that benzene moves at faces that do not share an edge commute, so that $t_it_j(W) = t_jt_i(W)$.

        \item ASM moves and Yang--Baxter moves are involutions, so this relation is clear.

    \end{enumerate}
\end{proof}

\begin{corollary}[cf. Cor.~3.37 of \cite{GPPSS-sl4}]\label{cor:moves}
    If $w_1\cdots w_l$ is a sequence of moves on $D$, then $w_1\cdots w_l\sim s_{i_1}\cdots s_{i_j}t_{i_{j+1}}\cdots t_{i_l}$ for some $F_{i_1},\dots, F_{i_l}$.
\end{corollary}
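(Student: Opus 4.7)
The plan is a bubble sort: apply relation (1) of \Cref{prop:rel} repeatedly to move every $s$-move to the left of every $t$-move in the word, at which point the word is of the desired shape.

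Precisely, define the \emph{$ts$-inversion count} of a sequence $w_1\cdots w_l$ to be the number of pairs $(p,q)$ with $p<q$ such that $w_p$ is a $t$-move and $w_q$ is an $s$-move. I would induct on this quantity. The base case (inversion count zero) means no $s$-move lies to the right of a $t$-move, so the word is already of the form $s_{i_1}\cdots s_{i_j}\,t_{i_{j+1}}\cdots t_{i_l}$.

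For the inductive step, pick any inversion pair $(p,q)$. Scanning the subword $w_p w_{p+1}\cdots w_q$ from left to right, we transition from a $t$-move at position $p$ to an $s$-move at position $q$, so there must exist adjacent positions $k, k+1$ with $p\le k<q$ such that $w_k$ is a $t$-move and $w_{k+1}$ is an $s$-move. Setting $D' = w_{k+2}\cdots w_l(D)$ and writing $w_k = t_{F_a}$, $w_{k+1} = s_{F_b}$, \Cref{prop:rel}(1) gives both that $F_a\in T(D')$ and $F_b\in S(t_{F_a}(D'))$ (so the swapped word is itself a valid sequence of moves), and that $t_{F_a}s_{F_b}(D') = s_{F_b}t_{F_a}(D')$ (so the two words act identically on $D$). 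Replacing the adjacent pair $w_k w_{k+1}$ by the swapped pair therefore yields an equivalent sequence of moves with strictly fewer $ts$-inversions, and the induction closes.

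The only mild subtlety is verifying that the swap is legitimate, i.e., that after swapping the relabeled $s$- and $t$-moves still refer to valid square and triangular faces of the configurations they are asked to act on; this is exactly the content of the first half of \Cref{prop:rel}(1), so no extra work is required. Note that the faces $F_{i_1},\dots,F_{i_l}$ indexing the moves in the final normal-form word need not coincide with those in the original sequence, but the statement only asserts the existence of \emph{some} such labeling, which is all the induction produces.
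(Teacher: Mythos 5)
Your proof is correct and is essentially the paper's argument: the paper's proof is simply ``repeatedly apply part (1) of Proposition \ref{prop:rel},'' and your bubble-sort induction on the $ts$-inversion count is a careful formalization of exactly that, including the check (built into \Cref{prop:rel}(1)) that each adjacent swap yields a valid sequence of moves.
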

\begin{proof}
    Repeatedly apply part (1) of \Cref{prop:rel}.
\end{proof}

The above corollary lets us only consider sequences of moves where we first apply all Yang--Baxter moves and then all the ASM moves (or vice versa).

\begin{remark}
    When we state the above proposition in terms of fully reduced hourglass plabic graphs, the commutation relation between square moves requires the two square faces to not share an edge, and the commutation relation between benzene moves requires the two benzene faces to not share an edge.
\end{remark}

As an easy application of the above relations, we prove that two move-equivalent special configurations must lie in the same piece $\mathcal S_i(\underline a)$.

\begin{lemma}\label{lem:special_equiv}
    Suppose $D, D'\in \mathcal S(\underline a)$. If $D\sim D'$, then $D\in \mathcal S_i(\underline a)\implies D'\in \mathcal S_i(\underline a)$ for $i\in \{0,1,2,3\}$.
\end{lemma}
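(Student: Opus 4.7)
The plan is to use the move invariance of the $\trip_2$-strand structure on $\WSSV(\underline a)$, which follows from \cite[Cor.~3.33]{GPPSS-sl4} via the intertwining bijection $\varphi$ of \Cref{thm:six_vertex}. Concretely, for $D \sim D'$ in $\WSSV(\underline a)$ each $\trip_2$-strand, viewed as a directed path between two boundary vertices together with its cyclic order of intersections with other strands, is the same in $D$ and $D'$. Throughout, let $\ell_1,\ell_2$ denote the $\trip_2$-strands through $b_1, b_2$.

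I would first separate $\mathcal{S}_0$, $\mathcal{S}_1$, and $\mathcal{S}_2 \cup \mathcal{S}_3$ by showing that membership in each is detected by invariant trip data. By \Cref{lem:singlestrand}, $D \in \mathcal{S}_0$ if and only if $\ell_1 = \ell_2$. If $D \in \mathcal{S}_1$ with common neighbor $q$, then $\ell_1$ and $\ell_2$ both pass straight through $q$ and are distinct (else we are in $\mathcal{S}_0$), hence they intersect at $q$; conversely, if $\ell_1 \ne \ell_2$ intersect then specialness rules out $\mathcal{S}_0$ as well as $\mathcal{S}_2, \mathcal{S}_3$, whose ladder definition forces disjointness, so $D \in \mathcal{S}_1$. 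The remaining case, $\ell_1 \ne \ell_2$ disjoint, corresponds exactly to $\mathcal{S}_2 \cup \mathcal{S}_3$.

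The main obstacle is separating $\mathcal{S}_2$ from $\mathcal{S}_3$ in the disjoint case, since both admit the same raw intersection pattern of $\ell_1$ with $\ell_2$. For this I would look at the set $\mathcal{R}$ of $\trip_2$-strands intersecting both $\ell_1$ and $\ell_2$, which is again a move invariant. If $\mathcal{R} = \emptyset$, then $D$ cannot lie in $\mathcal{S}_3$ because the middle edge $q_1 q_2$ there lies on some strand in $\mathcal{R}$, forcing $D \in \mathcal{S}_2$. Otherwise, let $\ell' \in \mathcal{R}$ be the strand whose intersection $q_1'$ with $\ell_1$ is closest to $b_1$ along $\ell_1$, and let $q_2' = \ell' \cap \ell_2$. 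The relative cyclic order in which $\ell'$ visits $q_1'$ and $q_2'$ is yet another invariant, and it differs between the two cases: in $\mathcal{S}_2$ with $k \ge 1$ the oriented rung condition $q_1^{(1)} \to q_2^{(1)}$ makes $\ell'$ visit $q_1'$ before $q_2'$, while in $\mathcal{S}_3$ the alternating orientation gives $q_2 \to q_1$, forcing the reverse order.

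The delicate point is verifying that the orientation of a $\trip_2$-strand (not just its underlying unoriented image) is preserved under moves: path invariance follows at once from $\varphi$ and \cite[Cor.~3.33]{GPPSS-sl4}, but one must also check that ASM and Yang--Baxter moves do not reverse traversal direction. This should follow from the facts that moves act locally, each strand enters and exits the modified region on the same oriented external edges before and after the move, and the direction at each boundary endpoint is dictated by the fixed type $\underline a$. Combining this orientation invariance with the previous case analysis shows that $\mathcal{S}_0,\mathcal{S}_1,\mathcal{S}_2,\mathcal{S}_3$ cannot mix under move equivalence among special configurations, which is the desired conclusion.
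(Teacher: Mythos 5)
Your reduction of the problem to move-invariant trip data works cleanly for three of the four pieces, and there it matches the paper: $\mathcal S_0$ is detected by $\ell_1=\ell_2$ (\Cref{lem:singlestrand}), $\mathcal S_1$ by $\ell_1\ne\ell_2$ intersecting, and $\mathcal S_2\sqcup\mathcal S_3$ by $\ell_1,\ell_2$ disjoint; since pairwise intersection of $\trip_2$-strands is preserved by ASM moves (which fix the underlying graph) and Yang--Baxter moves (which only slide a crossing), these three cases cannot mix.

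The genuine gap is in separating $\mathcal S_2$ from $\mathcal S_3$, which is the crux of the lemma. Your proposed invariant --- the order in which $\ell'$ visits $q_1'$ and $q_2'$ --- is blind to exactly the data that distinguishes the two pieces. A $\trip_2$-strand is defined by going \emph{straight} through each vertex regardless of edge directions, so the traversal order of $q_1'$ and $q_2'$ along $\ell'$ is determined by the underlying undirected graph and a choice of starting endpoint; it does not see whether the rung is directed $q_1^{(1)}\to q_2^{(1)}$ or $q_2^{(1)}\to q_1^{(1)}$. Two ladders with identical underlying graphs but opposite rung orientations have literally the same trip data, yet one is in $\mathcal S_2$ and the other in $\mathcal S_3$. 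No amount of checking that moves ``do not reverse traversal direction'' can rescue this, because the quantity being tracked never encoded the edge orientation in the first place. (A secondary issue: the cyclic order of crossings along a strand is \emph{not} move-invariant --- a Yang--Baxter move transposes two adjacent crossings on each of the three strands involved --- so even your identification of the strand in $\mathcal R$ meeting $\ell_1$ closest to $b_1$ needs justification, since two strands of $\mathcal R$ may cross each other without violating (P1)--(P3).) What is actually required, and what the paper does, is a move-theoretic argument: normalize the connecting sequence to Yang--Baxter moves followed by ASM moves (\Cref{cor:moves}), observe that no Yang--Baxter move can involve $q_1$ or $q_2$ because any triangle through them is non-oriented, and then show that ASM moves applied to a ladder with a non-oriented rung always leave at least one non-oriented rung, since an ASM move only applies to a square whose opposite edges disagree. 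You would need to supply an argument of this kind to close the $\mathcal S_2$ versus $\mathcal S_3$ case.
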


\begin{proof}
    Suppose first that $D\in\mathcal S_0(\underline a)$, so that $b_1$ and $b_2$ are connected by a simple edge $e$ in $D$. Note that no ASM moves or Yang--Baxter moves affect the edge $e$. Thus $e$ is incident to $b_1$ and $b_2$ in $D'$ as well, showing that $D'\in \mathcal S_0(\underline a)$.

    Suppose next that $D\in \mathcal S_1(\underline a)$, so $\ell_1\ne\ell_2$ intersect in $D$. All moves preserve whether or not $\trip_2$-strands intersect, so $\ell_1, \ell_2$ also intersect in $D'$. Since $D'\in \mathcal S(\underline a)$, this is only possible if $D'\in \mathcal S_1(\underline a)$.

    Suppose next that $D\in \mathcal S_3(\underline a)$, so that $D$ has edges $b_1- q_1- q_2- b_2$ alternating in orientation. Then since  $D'\notin \mathcal S_0(\underline a)\sqcup \mathcal S_1(\underline a)$, so $D'$ has a ladder between $b_1, b_2$. It then suffices to show that this ladder is not oriented, so that $D'\notin \mathcal S_2(\underline a)$. Let $w$ be a sequence of moves on $D$ such that $w(D) = D'$. By \Cref{cor:moves} we may assume $w = s_{i_1}\cdots s_{i_j}t_{i_{j+1}}\cdots t_{i_l}$. Let $D'' = t_{i_{j+1}}\cdots t_{i_l}(D)$. Note that no Yang--Baxter move can involve the vertices $q_1$ or $q_2$, as any triangle containing one of these must be non-oriented. This implies that $D''$ also has edges $b_1- q_1- q_2- b_2$ with the same orientation as in $D'$. Thus $D''\in \mathcal S_3(\underline a)$. Assume for the sake of contradiction that $D'\in \mathcal S_2(\underline a)$, i.e., $D'$ has an oriented ladder between $b_1, b_2$. Since $D' = s_{i_1}\cdots s_{i_j}(D'')$ and since square moves do not change the underlying graph, $D''$ must have a ladder between $b_1, b_2$. The only ASM moves that affect the orientations of the rungs of the ladder are at the squares within the ladder. But ASM moves only apply at squares whose opposite edges have different orientations. Thus, starting at the non-oriented ladder $D''$, applying any sequence of ASM moves will still result in at least one non-oriented rung. This contradicts the fact that $D'$ is an oriented ladder. Hence $D'\in \mathcal S_3(\underline a)$.

    Lastly, since $\mathcal S(\underline a)$ is a disjoint union of the 4 pieces, $D\in \mathcal S_2(\underline a)\implies D'\in \mathcal S_2(\underline a)$.

\end{proof}

\begin{definition}
    We say that $w_1\cdots w_l$ is a \emph{non-reduced} sequence of moves on $D\in \WSSV(\underline a)$ if it is equivalent to a sequence of moves of length strictly less than $l$. Otherwise, we say that $w_1\cdots w_l$ is \emph{reduced}.
\end{definition}

We next prove two technical lemmas which say roughly that in a reduced sequence of moves, a face cannot be flipped for a second time without first flipping all its neighbors. 

\begin{lemma}\label{lem:square}
    Let $w = s_{i_1}\cdots s_{i_l}$ be a reduced sequence of ASM moves on $D$, where $F_{i_1},\dots, F_{i_l}$ are the corresponding square faces of $D$. Suppose there exists $F_0\in S(D)$ such that $s_{i_1} = s_{i_l} = s_0$, and let $F_1,\dots, F_4$ be the faces adjacent to $F_0$ in $D$. Then the moves $s_1,\dots, s_4$ appear at least once in $w$.
\end{lemma}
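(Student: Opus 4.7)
The plan is to argue by contradiction: assume some $s_j$ with $j\in\{1,2,3,4\}$ does not appear in $w$, and construct a strictly shorter equivalent sequence of ASM moves.

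First I would carry out a parity analysis at $F_0$. The key fact is that $s_0$ is applicable exactly when the four edges of $F_0$ are in one of two ASM--admissible configurations $V_1,V_2$, which are related by simultaneously flipping \emph{all} four edges of $F_0$. The shared edge $e_j = F_0\cap F_j$ can only be flipped by $s_0$ or $s_j$; since $s_j$ does not occur in $w$ and $s_0$ does not occur between two consecutive $s_0$'s, the orientation of $e_j$ is constant throughout each such interval. Because $V_1$ and $V_2$ disagree on $e_j$, the state of $F_0$ immediately before each new $s_0$ must coincide with its state right after the preceding $s_0$ (the unique admissible state agreeing with the post--first--$s_0$ configuration on $e_j$). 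I would conclude from this that each $s_i$ with $i\in\{1,2,3,4\}\setminus\{j\}$ appears an \emph{even} number of times between any two consecutive occurrences of $s_0$ in $w$.

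Next I would consider the subword $\sigma$ strictly between the first two $s_0$'s. If $\sigma$ contains no $s_i$ with $i\in\{1,2,3,4\}$, then every move of $\sigma$ is at a face not sharing an edge with $F_0$, and hence commutes with $s_0$ by \Cref{prop:rel}(2). Pushing the first $s_0$ rightward through $\sigma$ and applying $s_0^2\sim 1$ from \Cref{prop:rel}(3) then gives a strictly shorter equivalent sequence, contradicting reducedness. Otherwise, the parity conclusion guarantees at least one pair of matching $s_i$'s (with $i\in\{1,2,3,4\}\setminus\{j\}$) inside $\sigma$, and the plan is to iteratively cancel such pairs by commuting them together and applying $s_i^2\sim 1$, reducing the count of ``adjacent--to--$F_0$'' moves in $\sigma$ until the base case above applies.

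The hard part will be justifying this pair--cancellation rigorously. Two matching occurrences of $s_i$ in $\sigma$ may be separated by moves at faces adjacent to $F_i$ (though not to $F_0$) that do \emph{not} commute with $s_i$, so the naive commutation--and--cancellation strategy does not immediately succeed using only the relations recorded in \Cref{prop:rel}. I expect the cleanest resolution to involve exploiting the parity constraint at \emph{every} neighbor of $F_0$ simultaneously via an induction on the length of $\sigma$, or invoking a finer invariant such as a height function on the symmetrized six--vertex configuration that records a monotone local quantity at $F_0$ which cannot be restored across an interval lacking $s_j$; either route should force the desired contradiction and yield the lemma.
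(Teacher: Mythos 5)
Your parity analysis is correct, and so is the ``key fact'' it rests on: at a square face of a well-oriented symmetrized six-vertex configuration, the orientations of the two non-square edges at each corner either force both square edges at that corner (making the move inapplicable) or allow exactly the two states ``both toward'' / ``both away''; propagating this around the square leaves exactly two admissible configurations, exchanged by the total flip. The paper uses this same fact implicitly. Your base case (no $s_i$ with $i\in\{1,2,3,4\}$ in $\sigma$, commute and cancel via \Cref{prop:rel}) is also fine.

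The problem is that the argument stops exactly where the real work begins, and the two escape routes you sketch do not close the gap. Pair-cancellation by commutation fails for the reason you yourself identify, and neither ``a finer invariant/height function'' nor an unspecified ``induction on the length of $\sigma$'' is an argument. The missing idea is that the lemma should be applied \emph{to itself} at the neighboring faces via a minimal counterexample: if $w$ is a shortest counterexample and some $s_i$ ($i\in\{1,\dots,4\}$) occurred twice, then the segment of $w$ between two consecutive occurrences of $s_i$ (inclusive) is a strictly shorter reduced sequence beginning and ending with $s_i$ in which the move at $F_0$ --- a face adjacent to $F_i$ --- never occurs, i.e.\ a smaller counterexample. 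Hence each $s_i$ occurs at most once. Once you have this, your even-parity conclusion forces each $s_i$ to occur zero times in $\sigma$, and your base case gives the contradiction; so your route can be completed, but only by adding precisely this step. The paper's proof uses the same minimality trick but concludes differently: it shows the one $s_i$ that does appear appears \emph{exactly} once, so the shared edge $e_1$ returns to its original orientation before the final $s_0$; the two-configuration fact then forces $e_2,e_3,e_4$ to return as well, which requires $s_2,s_3,s_4$ to appear, contradicting the existence of a missing $s_j$ directly rather than by shortening $w$. As written, your proposal is a correct first half with the decisive step absent.
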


\begin{proof}
    We prove the lemma by contradiction. Let $w = s_{i_1}\cdots s_{i_l}$ be the smallest (by length) counterexample to the lemma with $s_{i_1} = s_{i_l} = s_0$. By minimality, $s_0$ does not appear again within $w$. Let $F_1,\dots, F_4$ be the faces of $D$ adjacent to $F_0$ labeled clockwise for convenience.

    Firstly, at least one of $s_1,\dots, s_4$ must appear in $w$. If not, all the letters in $w$ commute with $s_0$ by \Cref{prop:rel} (2) and we can cancel out the two $s_0$ letters by \Cref{prop:rel} (3), contradicting reducedness of $w$.

    Without loss of generality, assume that $s_1$ appears in $w$. If $s_1$ appears more than once in $w$, then the subword of $W$ between two consecutive appearances of $s_1$ (including the $s_1$ factors) gives a reduced sequence of moves on some $D'\sim D$ starting and ending with $s_1$ and containing no $s_0$. This gives a strictly smaller counterexample to the lemma, which is a contradiction. Thus there is precisely one instance of $s_1$ in $w$.

    Finally, let $w' = s_{i_2}\cdots s_{i_l}$. Since $w$ is a valid sequence of moves on $D$, $F_0\in S(w'(D))$. In other words an ASM move at $F_0$ must be applicable to $w'(D)$. Let $e_1$ be the edge of $F_0$ common to $F_0$ and $F_1$, and let $e_2, e_3, e_4$ be the other four edges. Since $w'$ contains precisely one instance each of $s_0$ and $s_1$, the orientation of $e_1$ in $w'(D)$ is the same as the orientation of $e_1$ in $D$. Thus an ASM move at $F_0$ is applicable to $w'(D)$ if and only if $e_2,e_3,e_4$ have the same orientations in $w'(D)$ as in $D$. Since $w'$ starts with an application of $s_0$, which reverses the orientation of $e_2,e_3,e_4$, and since $s_0$ does not appear again in $w'$, these edges can be flipped again only if $s_2, s_3, s_4$ appear in $w'$. This contradicts the fact that $w$ is a counterexample to the lemma.
\end{proof}

\begin{lemma}\label{lem:benzene}
    Let $w = t_{i_1}\cdots t_{i_l}$ be a reduced sequence of benzene moves on $W$, where $F_{i_1},\dots, F_{i_l}$ are the corresponding benzene faces of $W$. Suppose there exists $F_0\in T(W)$ such that $t_{i_1} = t_{i_l} = t_0$, and let $F_1,\dots, F_6$ be the faces adjacent to $F_0$ in $W$. Then the moves $t_1,\dots, t_6$ appear at least once in $w$.
\end{lemma}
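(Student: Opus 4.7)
The plan is to mimic the proof of \Cref{lem:square}, replacing square faces with benzene hexagonal faces. I would proceed by contradiction and choose a counterexample $w = t_{i_1}\cdots t_{i_l}$ of minimal length with $t_{i_1} = t_{i_l} = t_0$; minimality forces $t_0$ to occur only at positions $1$ and $l$. A first check is that some $t_k$ with $k \in \{1,\ldots,6\}$ occurs in $w$: otherwise every letter of $w$ other than the two $t_0$'s is a benzene move at a face sharing no edge with $F_0$, so by the remark following \Cref{cor:moves} together with \Cref{prop:rel}(2) each such letter commutes with $t_0$, and then \Cref{prop:rel}(3) collapses the two $t_0$'s, contradicting reducedness.

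Without loss of generality $t_1$ occurs. If $t_1$ occurred more than once in $w$, the subword lying between two consecutive occurrences of $t_1$ would be a reduced sequence of benzene moves on some $W' \sim W$ starting and ending with $t_1$ and containing no $t_0$; since $F_0$ is among the six benzene faces adjacent to $F_1$, this subword would be a strictly shorter counterexample to the present lemma applied at $F_1$, contradicting minimality. So $t_1$ occurs exactly once. Now set $w' = t_{i_2}\cdots t_{i_l}$; validity of $w$ forces $F_0 \in T(w'(W))$, so $F_0$ must remain a benzene face after $w'$ is applied. For each $k \in \{1,\ldots,6\}$ the edge $e_k$ shared by $F_0$ and $F_k$ lies in exactly these two benzene faces, so only the benzene moves $t_0$ and $t_k$ can alter its simple/hourglass type. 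In $w'$ the letter $t_0$ occurs once and $t_1$ occurs once, so $e_1$ has been toggled twice and thus agrees with its state in $W$. The benzene move (\Cref{fig:benzene}) requires the six edges of $F_0$ to sit in a single globally coupled alternating pattern, so agreement of $e_1$ with $W$ forces $e_2,\ldots,e_6$ likewise to match their states in $W$ in $w'(W)$. But in $w'$ the unique $t_0$ toggles each $e_k$ with $k\in\{2,\ldots,6\}$ exactly once, so each such $t_k$ must appear in $w'$ to undo that toggle --- contradicting the fact that $w$ is a counterexample.

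The main obstacle I anticipate is rigorously extracting from \Cref{fig:benzene} that a single application of $t_0$ toggles each of the six edges of $F_0$ (say by swapping simple and hourglass type) and that the benzene structure couples these edges tightly enough that restoring $e_1$ forces the restoration of the remaining five. This is the benzene analogue of the orientation-reversal analysis at the heart of \Cref{lem:square} and, at worst, amounts to a small case check on how benzene moves at $F_k$ interact with the edge $e_k$.
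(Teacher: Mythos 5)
Your proof follows the paper's argument for \Cref{lem:benzene} essentially verbatim: minimal counterexample, the commutation/cancellation step via \Cref{prop:rel}, the uniqueness of $t_1$ via the subword argument, and finally the analysis of how the edges $e_1,\dots,e_6$ of $F_0$ must be restored before the last $t_0$ can apply. Your last step is phrased slightly more cleanly as a parity/toggle argument (each $e_k$ is altered only by $t_0$ and $t_k$, and the rigid alternating simple/hourglass pattern of a benzene face forces all six edges back to their original states once $e_1$ is), whereas the paper explicitly identifies which edges are simple and which are hourglasses, but the content is the same.
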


\begin{proof}
    We prove the lemma by contradiction. Let $w = t_{i_1}\cdots t_{i_l}$ be the smallest (by length) counterexample to the lemma with $t_{i_1} = t_{i_l} = t_0$. By minimality, $t_0$ does not appear again within $w$. Let $F_1,\dots, F_6$ be the faces of $W$ adjacent to $F_0$ labeled clockwise for convenience. For each $j\in [6]$ let $e_j$ be the edge of $W$ which is the intersection of $F_0$ and $F_j$.

    At least one of $t_1,\dots, t_6$ must appear in $w$. If not, all the letters in $w$ commute with $t_0$ by \Cref{prop:rel} (2) and we can cancel out the two $t_0$ letters by \Cref{prop:rel} (3), contradicting reducedness of $w$. Without loss of generality, assume that $t_1$ appears in $w$. If $t_1$ appears more than once in $w$, then the subword of $W$ between two consecutive appearances of $t_1$ (including the $t_1$ factors) gives a reduced sequence of moves on some $W'\sim W$ starting and ending with $t_1$ and containing no $t_0$. This gives a strictly smaller counterexample to the lemma, which is a contradiction. Thus there is precisely one instance of $t_1$ in $w$.
    
    Now let $w' = t_{i_2}\cdots t_{i_l}$. Since $w$ is a valid sequence of moves on $W$, $F_0\in T(w'(W))$. In other words a benzene move at $F_0$ must be applicable to $w'(W)$. Since $w'$ contains precisely one instance each of $t_0$ and $t_1$, $e_1$ must have been a simple edge in $W$ and in $w'(W)$ (for if $e_1$ were an hourglass, then no application of $t_1$ is possible after first applying $t_0$). Thus $e_1, e_3, e_5$ are simple edges and $e_2, e_4, e_6$ are hourglass edges in $W$. For a benzene move at $F_0$ to be applicable to $w'(W)$, the edges $e_3, e_5$ must be simple edges and $e_2,e_4, e_6$ be hourglass edges in $w'(W)$. Since $w'$ starts with an application of $t_0$, it converts $e_3, e_5$ into hourglass edges and $e_2, e_4, e_6$ into simple edges. To then convert them to the desired form, since no more $t_0$ moves occur in $w'$, we have to apply $t_2, \dots, t_6$ at least once. This ensures that $w$ contains $t_1,\dots, t_6$ at least once, contradicting the fact that $w$ is a counterexample to the lemma.
\end{proof}

With these move-theoretic lemmas in place, we are now able to justify that the $\wt\swap$ map is independent of the special representative chosen:

\begin{proposition}\label{prop:special}
    Suppose $D, D'\in \mathcal S(\underline a)$ and $b_1,b_2,\ell_1,\ell_2$ are as before. If $D\sim D'$, then $\wt\swap(D) \sim \wt\swap(D')$.
\end{proposition}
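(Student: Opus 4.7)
The plan is to proceed by case analysis on which piece $\mathcal{S}_i(\underline a)$ contains $D$, using \Cref{lem:special_equiv} to ensure that $D'$ lies in the same piece. In each case I would take a sequence of moves $w$ with $w(D) = D'$, which without loss of generality is reduced and (by \Cref{cor:moves}) factored as $w = s_{j_1}\cdots s_{j_p}\, t_{k_1}\cdots t_{k_q}$, and construct a corresponding sequence $w'$ with $w'(\widetilde{\mathrm{swap}}(D)) = \widetilde{\mathrm{swap}}(D')$.

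Case $i=0$ is essentially automatic: the simple edge $e$ joining $b_1$ and $b_2$ cannot lie in any triangular face or in any square face admitting an ASM move, because both of its endpoints are boundary vertices of degree one. Hence no move of $w$ involves $e$, and since $\widetilde{\mathrm{swap}}$ acts only by reversing the orientation of $e$, the sequence $w$ itself witnesses $\widetilde{\mathrm{swap}}(D) \sim \widetilde{\mathrm{swap}}(D')$. Case $i=3$ is only slightly harder: the argument already used in the proof of \Cref{lem:special_equiv} shows that no Yang--Baxter move can act on a triangle containing $q_1$ or $q_2$ (such a triangle would have to be non-oriented), and I would extend this to ASM moves using \Cref{lem:square}. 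Specifically, any ASM move touching the rung $q_1 q_2$ would have to appear at least twice in $w$ (since $D'$ has the same alternating orientations on this path as $D$), and \Cref{lem:square} then forces a cascade of ASM moves across all neighboring squares, which must ultimately reach the boundary vertices $b_1$ or $b_2$ where no further such move is possible, contradicting reducedness of $w$.

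Cases $i=1$ and $i=2$ are the main technical cases and where I expect the work to concentrate. In Case~$i=1$, $\widetilde{\mathrm{swap}}$ removes the common neighbor $q$ and uncrosses $\ell_1, \ell_2$ locally, while in Case~$i=2$, $\widetilde{\mathrm{swap}}$ introduces a new crossing-vertex $q$ adjacent to $b_1$ and $b_2$. The plan in both cases is to show that any reduced sequence of moves $w$ from $D$ to $D'$ leaves the relevant local structure (the small triangle at $q$, or the first rung of the oriented ladder) essentially undisturbed: no Yang--Baxter move can act at a triangle containing the boundary edges incident to $b_1$ or $b_2$, and \Cref{lem:square} together with \Cref{lem:benzene} bound the propagation of any ASM or benzene move that does reach into the local region, yielding a contradiction with reducedness. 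Having established this, I would transport $w$ across $\widetilde{\mathrm{swap}}$: in Case $i=1$ by literally running the same moves after deleting $q$, and in Case $i=2$ by running the same moves after inserting a new crossing at the first rung.

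The hardest part will be Case~$i=2$ when the ladder has many rungs, since interior ASM moves may reshuffle crossings inside the ladder without changing whether the ladder remains oriented. Here I expect to need a small induction on the number of rungs, or equivalently on the length of $w$, arguing that any ASM move performed inside the ladder commutes (in the sense of \Cref{prop:rel}) past the insertion of the new vertex $q$ at the outermost rung; the residual sequence then fits on $\widetilde{\mathrm{swap}}(D)$ and produces $\widetilde{\mathrm{swap}}(D')$.
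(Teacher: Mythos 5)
Your proposal follows essentially the same route as the paper: split into the pieces $\mathcal S_i(\underline a)$ via \Cref{lem:special_equiv}, normalize the move sequence via \Cref{cor:moves}, and use \Cref{lem:square} and \Cref{lem:benzene} to show that the face adjacent to the local boundary structure can never be flipped in a reduced sequence (since it would have to be flipped twice, forcing a flip of the non-existent square/benzene face on the boundary side), so that $\wt\swap$ commutes with $w$. The one point where your emphasis is off is case (2): it is actually the easiest case, since the new crossing $q$ is inserted between $b_1,b_2$ and the first rung, outside every face where a move can occur, so $\wt\swap$ commutes with all ASM and Yang--Baxter moves outright and no induction on the number of rungs is needed; the genuine work concentrates in cases (1) and (3), exactly as you handle them.
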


\begin{proof}
    We analyze the 4 classes of special configurations separately. \Cref{lem:special_equiv} guarantees that $D, D'$ lie in the same piece $\mathcal S_i(\underline a)$ for some $i\in \{0,1,2,3\}$.

    \begin{enumerate}[leftmargin=*]\setcounter{enumi}{-1}
        \item \emph{Suppose $D,D'\in \mathcal S_0(\underline a)$}: then $b_1$ and $b_2$ are connected by a simple edge $e$ in $D$. Let $w = w_1\cdots w_l$ be a reduced sequence of moves on $D$ such that $D' = w(D)$. It is easy to see that no ASM moves or Yang--Baxter moves interact with the edge $e$. Therefore, we may apply the same sequence of moves $w$ to $\wt\swap(D)$ to obtain $\wt\swap(D')$, showing that $\wt\swap(D)\sim\wt\swap(D')$.
        
        \item \emph{Suppose $D, D'\in \mathcal S_1(\underline a)$}: Let $w = w_1\cdots w_l$ be a reduced sequence of moves on $D$ such that $D' = w(D)$. By \Cref{cor:moves} we may assume $w =  s_{i_1}\cdots s_{i_j}t_{i_{j+1}}\cdots t_{i_l}$ is a reduced sequence of moves on $D$ for some $F_1,\dots, F_l$ such that $D' = w(D)$. 
        
        Since ASM moves do not change the underlying graph, $t_{i_{j+1}}\cdots t_{i_l}(D)\in \mathcal S_1(\underline a)$. It is clear $\wt\swap$ on $\mathcal S_1(\underline a)$ commutes with ASM moves, so we may replace $D'$ by $t_{i_{j+1}}\cdots t_{i_l}$ to assume $w = t_{i_{j+1}}\cdots t_{i_l}$.

        We now use the language of webs to simplify the argument. Boundary vertices $b_1, b_2$ sharing a common neighbor in $D$ translates to the neighbors of $b_1, b_2$ in $W$ being connected by an hourglass edge $e$. Let $F_0$ be the face of $W$ incident to $e$ but not $b_1, b_2$. We claim that $t_0$ does not appear in $w$.

        Suppose for the sake of contradiction that $F_0$ is a benzene face and that $t_0$ appears in $w$. The first application of $t_0$ converts $e$ from an hourglass edge to a simple edge. But our hypothesis is that $e$ is an hourglass edge in $D'$, so there must be a second application of $t_0$. Note that the other face $F_1$ of $W$ adjacent to $e$ is clearly not a benzene face. An application of \Cref{lem:benzene} to the subword of $w$ between two $t_0$ factors yields a contradiction, proving the claim.

        The web $\wt\swap(W)$ is defined by removing the hourglass $e$ and swapping the colors of $b_1, b_2$. This operation clearly commutes with all benzene moves that happen at faces other than $F_0$. By the above claim, $w(\wt\swap(W)) = \wt\swap(w(W))$ so that $\wt\swap(W)\sim \wt\swap(W')$. Translating back to the language of six-vertex configurations, $\wt\swap(D)\sim \wt\swap(D')$.

        \item \emph{Suppose $D, D'\in \mathcal S_2(\underline a)$}: Let $w = w_1\cdots w_l$ be a reduced sequence of moves on $D$ such that $D' = w(D)$. By the description of $\wt\swap$ on $\mathcal S_2(\underline a)$ as just introducing a crossing between $\ell_1,\ell_2$ adjacent to $b_1, b_2$, it is clear that $\wt\swap$ commutes with ASM moves and with sequences of Yang--Baxter moves. Thus $w(\wt\swap(D)) = \wt\swap(w(D))$ so that $\wt\swap(D)\sim \wt\swap(D')$.

        \item \emph{Suppose $D, D'\in \mathcal S_3(\underline a)$}: Let $w = w_1\cdots w_l$ be a reduced sequence of moves on $D$ such that $D' = w(D)$. By \Cref{cor:moves} we may assume $w = s_{i_1}\cdots s_{i_j}t_{i_{j+1}}\cdots t_{i_l}$ is reduced. Since ASM moves do not change the underlying graph, $D'= w(D)$ having a ladder between $b_1,b_2$ implies that so does $t_{i_{j+1}}\cdots t_{i_l}(D)$. By the orientation of edges of the path $b_1-q_1-q_2- b_2$, it is clear that no Yang--Baxter move can slide a $\trip_2$-strand across $q_1$ or $q_2$. By the description of $\wt\swap$ on $\mathcal S_3(\underline a)$, we see that $\wt\swap(t_{i_{j+1}}\cdots t_{i_l}(D)) = t_{i_{j+1}}\cdots t_{i_l}(\wt\swap(D))$. Replacing $D$ by $t_{i_{j+1}}\cdots t_{i_l}(D)$ we may reduce to $w = s_{i_1}\cdots s_{i_l}$.

        Let $F_0$ be the face incident to the edge $q_1-q_2$ but not incident to $b_1, b_2$. We claim that $s_0$ does not appear in $w$. Assume for the sake of contradiction that $s_0$ does appear in $w$. The first application of $s_0$ flips the orientation of $e$. But by hypothesis $w(D)\in \mathcal S_3(\underline a)$, so the orientation of $e$ in $w(D)$ is the same as in $D$, implying that there exists a second factor of $s_0$ in $w$. Note that the other face $F_1$ of $D$ adjacent to $e$ is clearly not a square face. An application of \Cref{lem:square} to the subword of $w$ between two $s_0$ factors yields a contradiction, proving the claim.

        Since $\wt\swap$ is defined on $\mathcal S_3(\underline a)$ by just reversing the orientations of the edges in the path $b_1- q_1- q_2- b_2$, it is clear that $\wt\swap$ commutes with all ASM moves that happen at faces other than $F_0$. By the above claim, $w(\wt\swap(D)) = \wt\swap(w(D))$ so that $\wt\swap(D)\sim\wt\swap(D')$.
    \end{enumerate}
\end{proof}

We have finally shown the first part of the following proposition.

\begin{proposition}\label{prop:swap}
    Let $b_1, b_2$ be adjacent boundary vertices of $D\in \WSSV(\underline a)$ with $a_1\ne a_2$. Let $\underline a'$ be the boundary conditions with $a_1, a_2$ swapped. Then the map $\swap:\WSSV(\underline a)\slash\sim \ \to \WSSV(\underline a')\slash\sim$ is well defined and an involution. In particular,
    \[|\WSSV(\underline a)\slash\sim | = |\WSSV(\underline a')\slash\sim |.\]
\end{proposition}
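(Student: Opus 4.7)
The plan is to bootstrap from the work already done in \Cref{lem:special,lem:special_equiv,prop:special} and the verification that $\wt\swap$ is an involution on $\mathcal{S}(\underline a)$ which was recorded immediately after its definition. There is really nothing left to do except assemble these pieces.

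First I would define the map. Given a class $[D] \in \WSSV(\underline a)/{\sim}$, use \Cref{lem:special} to pick any representative $\tilde D \in \mathcal{S}(\underline a)$ with $\tilde D \sim D$, and set $\swap([D]) \coloneqq [\wt\swap(\tilde D)]$. To see that this is well defined, suppose $\tilde D_1, \tilde D_2 \in \mathcal{S}(\underline a)$ are two special representatives of the same move-equivalence class. Then $\tilde D_1 \sim \tilde D_2$, so \Cref{prop:special} applies and gives $\wt\swap(\tilde D_1) \sim \wt\swap(\tilde D_2)$; hence $[\wt\swap(\tilde D_1)] = [\wt\swap(\tilde D_2)]$ in $\WSSV(\underline a')/{\sim}$. (Implicitly we are using that $\wt\swap$ lands in $\WSSV(\underline a')$, which was checked in the short discussion after the definition of $\wt\swap$: the resulting configuration still satisfies (P1) and (P2), since $\wt\swap$ introduces no new self-intersections, double crossings, or non-oriented big triangles.)

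For the involution property, let $[D] \in \WSSV(\underline a)/{\sim}$ and pick a special representative $\tilde D \in \mathcal{S}(\underline a)$. Then $\swap([D]) = [\wt\swap(\tilde D)]$, and the bulleted observations after the definition of $\wt\swap$ show that $\wt\swap(\tilde D) \in \mathcal{S}(\underline a')$ is itself special (with its type among $\{0,1,2,3\}$ swapped according to $1 \leftrightarrow 2$, with $0,3$ fixed). So $\wt\swap(\tilde D)$ is a legitimate special representative to use when computing $\swap$ on its class, giving
\[
\swap(\swap([D])) = \swap([\wt\swap(\tilde D)]) = [\wt\swap(\wt\swap(\tilde D))] = [\tilde D] = [D],
\]
where the penultimate equality is the already-recorded fact that $\wt\swap \circ \wt\swap = \mathrm{id}_{\mathcal{S}(\underline a)}$ (a case-by-case check across the four classes $\mathcal{S}_i$). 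Symmetrically, the same argument gives a well-defined inverse map $\WSSV(\underline a')/{\sim} \to \WSSV(\underline a)/{\sim}$, so $\swap$ is a bijection and the cardinality equality follows immediately.

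The substantive content is concentrated entirely in \Cref{prop:special}, which has already been proven; so the main obstacle here has actually been dispatched upstream. What remains in this proposition is bookkeeping: verifying that the type-changing behavior of $\wt\swap$ (mapping $\mathcal{S}_i(\underline a)$ to some $\mathcal{S}_{\sigma(i)}(\underline a')$) is compatible with using $\wt\swap(\tilde D)$ as a special representative on the other side, and invoking $\wt\swap^2 = \mathrm{id}$. No new combinatorial arguments are needed.
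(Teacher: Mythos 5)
Your proposal is correct and follows essentially the same route as the paper's proof: well-definedness via \Cref{lem:special} and \Cref{prop:special}, and the involution property from $\wt\swap\circ\wt\swap=\mathrm{id}$ on special representatives. Your write-up is in fact slightly more careful than the paper's in noting explicitly that $\wt\swap(\tilde D)$ is itself special and hence usable as a representative on the other side.
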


\begin{proof}
    \Cref{prop:special} shows that $\wt\swap(D')$ for any special representative $D'\sim D$ with $D'\in \mathcal S(\underline a)$ gives the same element of $\WSSV(\underline a')/{\sim}$. Therefore $\swap$ is well defined. The fact that $\swap$ is an involution follows easily from the same statement for $\wt\swap$.
\end{proof}

\subsection{Swapping non-oscillating boundary conditions}
\label{sec:non-osc-swap}

In this subsection, we extend the results of the previous subsection to the case when the boundary conditions are not necessarily oscillating.

Let $W\in \CRG(\underline a)$ with $a_1 = 3, a_2 = 2$. We define $W'$ to be the oscillization $\osc(W)$, so $W'$ has boundary conditions $(3, 1, 1, \dots)$, with corresponding symmetrized six-vertex configuration $D'$. Note that $b_2$ and $b_3$ share a common neighbor $q$ in $D'$. We will show below how to perform local transformations at the boundary of $D'$ to construct a well-oriented symmetrized six-vertex configuration $D''$ with boundary conditions $(1,1,3,\dots)$, such that $b_1$ and $b_2$ share a common neighbor in $D''$. De-oscillizing $D''$ and converting back to webs will then give $\swap(W)$.

Let $\ell_1, \ell_2, \ell_3$ be the $\trip_2$-strands through $b_1, b_2, b_3$ respectively. We split into three cases, the first of which is the easiest to deal with:

\begin{enumerate}[leftmargin=*]\setcounter{enumi}{-1}
    \item \emph{Suppose $\ell_1=\ell_3$:} Let $\ell$ be a $\trip_2$-strand that intersects $\ell_1=\ell_3$. Then $\ell$ cannot escape the region bounded by $\ell_1$ and the boundary (since $\trip_2$-strands do not double cross), so it must terminate at the boundary between $b_1$ and $b_3$. But there is only one boundary vertex $b_2$ between $b_1$ and $b_2$, so $\ell$ must be $\ell_2$. This shows that $\ell_1=\ell_3$ consists of exactly the edges $b_3\to q\to b_1$. Thus we define $D''$ to be obtained from $D'$ by reversing the orientation of $\ell_1=\ell_3$ to be $b_1\to q\to b_3$.
\end{enumerate}
    \begin{figure}[h]
        \centering
        \includegraphics[width=0.5\linewidth]{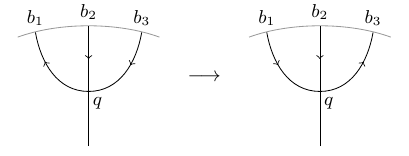}
        \caption{Case (0).}
    \end{figure}
In the other cases below, we obtain $D''$ by applying two swaps: $D'' = \swap_{2,3}\o\swap_{1,2}(D')$.
\begin{enumerate}[leftmargin=*]    
    \item \textit{Suppose $\ell_1\ne\ell_3$ do not intersect:} Suppose $\ell$ is a $\trip_2$-strand that intersects both $\ell_2$ and $\ell_3$. Then inspecting the possible configurations at $q$, we see that $\ell, \ell_2, \ell_3$ form a non-oriented triangle which contradicts the fact that $D'$ is well-oriented. Thus no $\trip_2$-strand can intersect both $\ell_2$ and $\ell_3$.

    If $\ell_1$ were to intersect $\ell_2$, then it would also intersect $\ell_3$, contradicting the previous paragraph. Thus $\ell_1, \ell_2$ do not intersect. Furthermore, any $\trip_2$-strand intersecting both $\ell_1, \ell_2$ must also intersect $\ell_3$ contradicting the claim. Hence, $D'$ has an (empty) oriented ladder between $b_1, b_2$, implying that $\swap_{1, 2}(D')$ is obtained by introducing a vertex $q'$ at which $\ell_1, \ell_2$ cross.

    Since $b_2\to q', q\to q', q\to b_3$ are oriented edges in $\swap_{1, 2}(D')$, the composition $\swap_{2, 3}(\swap_{1, 2}(D'))$ is obtained by reversing the orientations of these edges. It is clear now that $b_1, b_2$ have the common neighbor $q'$ in $D'' = \swap_{2, 3}\o\swap_{1, 2}(D)$.

    \begin{figure}[h]
        \centering
        \includegraphics[width=0.8\linewidth]{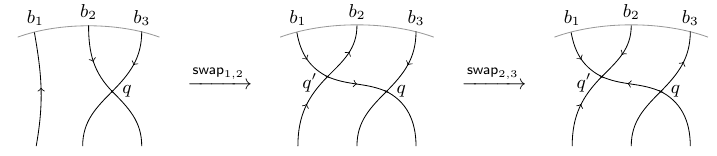}
        \caption{Case (1).}
    \end{figure}

    \item\textit{Suppose $\ell_1\ne\ell_3$ intersect:} Once again, $\ell_1$ cannot intersect $\ell_2$, as this would create a non-oriented triangle with sides $\ell_1, \ell_2, \ell_3$ by inspecting the possibilities at $q$. By \Cref{lem:ladder} we may apply a sequence of Yang--Baxter moves and assume $D'$ has a ladder between $b_1, b_2$. Since $\ell_3$ is a $\trip_2$-strand intersecting both $\ell_1, \ell_2$, by our description of ladders, $q$ is still adjacent to $b_2$ and $q' = \ell_1\cap \ell_3$ is adjacent to $b_1$. This then implies that $q$ is still adjacent to $b_3$, for if $\ell'$ were a $\trip_2$-strand intersecting $\ell_3$ between $q$ and $b_3$, then $\ell'$ has no way of escaping the region bounded by $\ell_2, \ell_3$ and the boundary of the disk. We have two subcases based on the orientation of the ladder:

    \begin{enumerate}
        \item \textit{Suppose the ladder is oriented:} Then $\swap_{1, 2}(D')$ is defined by crossing $\ell_1, \ell_2$. This creates a small triangle bounded by $\ell_1, \ell_2, \ell_3$. Applying a Yang--Baxter move at this triangle makes $q'$ a common neighbor of of $b_2, b_3$, and applying $\swap_{2, 3}$ has the effect of removing $q'$ and uncrossing $\ell_2, \ell_3$. This leaves $b_1, b_2$ with the common neighbor $q$ in $D'' = \swap_{2,3}\o\swap_{1,2}(D')$. 
        \begin{figure}[h]
        \centering
        \includegraphics[width=0.8\linewidth]{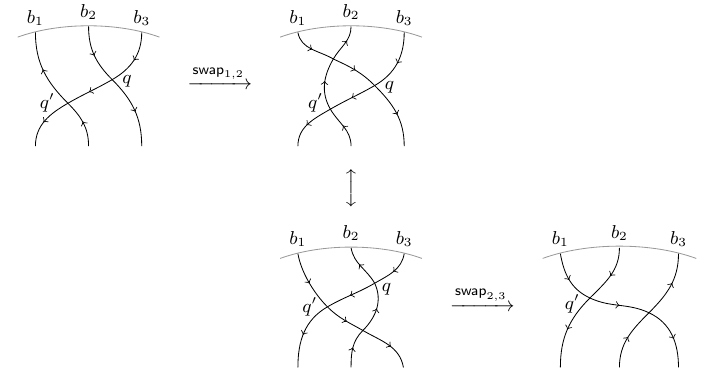}
        \caption{Case (2a).}
    \end{figure}

        \item \textit{Suppose the ladder is non-oriented:} Then we may assume (after applying suitable ASM moves by \Cref{lem:ladder_nonoriented}) that the edges $b_1\to q', q\to q', q\to b_2$ are oriented in $D$. Then $\swap_{1, 2}(D')$ is obtained by reversing the orientations of these three edges. Now $b_2, b_3$ still have the common neighbor $q$ in $\swap_{1, 2}(D')$, so applying $\swap_{2,3}$ removes $q$ and undoes the crossing between $\ell_2, \ell_3$. This leaves $b_1, b_2$ with the common neighbor $q'$ in $D'' = \swap_{2,3}\o\swap_{1,2}(D')$. \begin{figure}[h]
        \centering
        \includegraphics[width=0.8\linewidth]{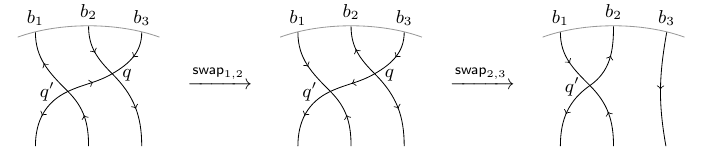}
        \caption{Case (2b).}
    \end{figure}
    \end{enumerate}
\end{enumerate}

The above allows us to extend $\swap$ to boundary conditions $a_1 = 3, a_2 = 2$. After picking suitable move-equivalent representatives, the effect of $\swap$ on webs is given by the local transformations at the boundary shown in \Cref{fig:swap_hg} below. Running the steps in reverse defines $\swap$ when $a_1 = 2, a_2 = 3$. The local transformation pictures are obtained from \Cref{fig:swap_hg} by horizontal reflection. Lastly, to define swap when $a_1 = 2, a_2 = 1$, we can just invert all colors, apply $\flip_1$, follow the same recipe as above, invert colors, and apply $\flip_2$.

\begin{figure}[h]
        \centering
        \includegraphics[width=0.5\linewidth]{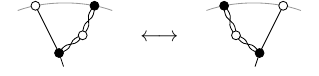}
        
        \includegraphics[width=0.5\linewidth]{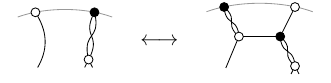}

        \includegraphics[width=0.5\linewidth]{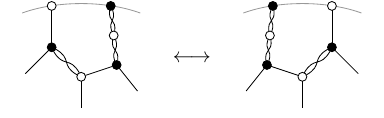}

        \includegraphics[width=0.5\linewidth]{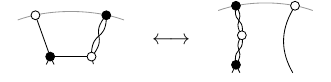}
        \caption{Effect of $\swap$ on swapping a type $2$ and type $3$ vertex. Top to bottom, these transformations correspond to cases (0), (1), (2a), (2b).}
        \label{fig:swap_hg}
        \end{figure}

\subsection{Effect of $\swap$ on trips and non-convexity}

We first analyze the effect of $\wt\swap$ on trips in the oscillating case.

\begin{lemma}
    \label{lem:swap_trip_osc}
    Let $W\in \mathcal S(\underline o)$ of oscillating type with $o_1\ne o_2$. Let $\trip_\bullet$ and $\trip'_\bullet$ be the trip permutations of $W$ and $\wt\swap(W)$ respectively, and let $(1\ 2)\in S_n$ be the transposition swapping $1$ and $2$.
    \begin{enumerate}
    \setcounter{enumi}{-1}
        \item If $W\in \mathcal S_0(\underline o)$, then
        \[\trip_i' = \trip_i \hspace{6mm}\forall i\in [3].\]
        \item If $W\in \mathcal S_1(\underline o)\sqcup \mathcal S_2(\underline o)$, then
        \begin{align*}
        \trip_1' &= \trip_1\ (1\ 2),\\
        \trip_2' &= (1\ 2)\ \trip_2\ (1\ 2),\\
        \trip_3' &= (1\ 2)\ \trip_3.            
        \end{align*}
        
        \item If $W\in \mathcal S_3(\underline o)$, then 
        \begin{align*}
        \trip_1' &= (1\ 2)\ \trip_1\ (1\ 2),\\
        \trip_2' &= \trip_2,\\
        \trip_3' &= (1\ 2)\ \trip_3\ (1\ 2).            
        \end{align*}
    \end{enumerate}
\end{lemma}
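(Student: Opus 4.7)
The plan is to proceed by case analysis according to the decomposition $\mathcal S(\underline o) = \mathcal S_0(\underline o) \sqcup \mathcal S_1(\underline o) \sqcup \mathcal S_2(\underline o) \sqcup \mathcal S_3(\underline o)$. In each piece $\mathcal S_i(\underline o)$, $\wt\swap$ is defined by an explicit local transformation of $W$ inside a small neighborhood of $b_1, b_2$, so any $\trip$-strand that does not enter this neighborhood is completely unaffected. It therefore suffices to trace, in each case, the $\trip_i$-strands that pass through the local region and to compare the resulting endpoints with the claimed conjugation formulas.

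Case $(0)$ is immediate: $b_1, b_2$ are joined by a single simple edge in $W$, and $\wt\swap$ only reverses the orientation of this edge (equivalently, swaps the colors of $b_1, b_2$ in the web). Every $\trip_i$-strand starting at $b_1$ or $b_2$ simply traverses this edge and terminates, so $\trip_i'(b_1) = b_2 = \trip_i(b_1)$ and $\trip_i'(b_2) = b_1 = \trip_i(b_2)$ for all $i \in [3]$, and all other strands are unchanged, giving $\trip_\bullet' = \trip_\bullet$.

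For case $(1)$, the common neighbor $q$ of $b_1, b_2$ in the SSV corresponds to an hourglass edge $q_w - q_b$ in the web with $b_1 - q_w$ and $q_b - b_2$, and the $\trip_2$-strands $\ell_1 \ne \ell_2$ cross at $q$. After $\wt\swap$ the crossing is undone and the hourglass is removed, so the $\trip_2$-strand starting at the new $b_1$ follows what had been $\ell_2$ and exits at $\trip_2(b_2)$, with the symmetric statement at $b_2$; this gives $\trip_2' = (1\ 2)\,\trip_2\,(1\ 2)$. For $\trip_1$, a rule-of-the-road check at $q_w$ and $q_b$ in $W$ versus at their counterparts in $\wt\swap(W)$ (where $b_1, b_2$ have swapped colors) will show that the $\trip_1$-strand out of the new $b_1$ leaves the local region along the same edge as the $\trip_1$-strand out of $b_2$ in $W$, so that $\trip_1'(b_1) = \trip_1(b_2)$ outside $\{b_1, b_2\}$, and likewise $\trip_1'(b_2) = \trip_1(b_1)$; after incorporating the possible correction when the output is itself $b_1$ or $b_2$, this yields $\trip_1' = \trip_1(1\ 2)$, and by reversal $\trip_3' = (1\ 2)\,\trip_3$. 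Case $(2)$ then follows at once from case $(1)$ via the involutivity $\wt\swap \circ \wt\swap = \mathrm{id}$ together with the fact that $\wt\swap$ swaps $\mathcal S_1 \leftrightarrow \mathcal S_2$, so the same formulas propagate.

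Finally, for case $(3)$, $\wt\swap$ reverses the orientations of the three edges along $b_1 - q_1 - q_2 - b_2$ in $D$. Since $\trip_2$-strands pass straight through transmitting vertices independent of orientation, the strands $\ell_1, \ell_2$ (which do not intersect) are unchanged, giving $\trip_2' = \trip_2$. On the other hand, $\trip_1$ and $\trip_3$ turn at $q_1, q_2$ according to the web colors of these vertices, which flip under the orientation reversal; tracing both orientation patterns at $q_1$ and $q_2$ will show that the $\trip_1$-strand out of the new $b_1$ in $\wt\swap(W)$ continues along the same edges as the $\trip_1$-strand out of $b_2$ in $W$ (modulo the $(1\ 2)$ correction at the endpoints), giving $\trip_1' = (1\ 2)\,\trip_1\,(1\ 2)$ and similarly $\trip_3' = (1\ 2)\,\trip_3\,(1\ 2)$. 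The main obstacle will be the diagrammatic bookkeeping of the rule-of-the-road turns in cases $(1)$ and $(3)$: the precise turn taken by $\trip_1, \trip_3$ at each of the internal vertices $q_w, q_b, q_1, q_2$ depends on the local arrangement of simple versus hourglass edges and on the colors induced by the SSV orientations, and so this reduces to a small but careful finite case check over the allowed local vertex configurations.
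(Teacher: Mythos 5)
Your proposal is correct and takes essentially the same route as the paper, which disposes of this lemma with the single remark that it is ``an easy case check using the boundary transformations'' defining $\wt\swap$; your case-by-case tracing of strands through the local region near $b_1,b_2$ (with the observation that strands avoiding that region are untouched, and the use of involutivity to deduce case (2) from case (1)) is exactly that check, carried out in somewhat more detail than the paper bothers to record.
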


\begin{proof}
    This is an easy case check using the boundary transformations in \Cref{fig:swap_local}.
\end{proof}

Since swapping an hourglass boundary condition involves two swaps of the oscillating kind, the above lemma lets us conclude that $\swap$ preserves returning trips.

\begin{lemma}
    Let $W\in \CRG(\underline a)$ be a clasped web with $b_1,b_2$ in $\underline c_1$, and let $W'$ be a representative of $\swap(W)$ in $\CRG(\underline a')$ with $b_1,b_2$ in the clasp $\underline c_1'$. Then $W$ has a $\trip$-strand with both endpoints in $\underline c_1$ if and only if $W'$ has a $\trip$-strand with both endpoints in $\underline c_1'$.
\end{lemma}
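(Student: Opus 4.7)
The plan is to invoke \Cref{lem:swap_trip_osc} in the oscillating case and reduce the general case to it via oscillization. The core observation is this: in every case of \Cref{lem:swap_trip_osc}, the new trip permutations $\trip'_i$ differ from the old $\trip_i$ only by pre- and/or post-composition with the transposition $(1\ 2) \in S_n$. Since $b_1$ and $b_2$ both lie in $\underline c_1$ by hypothesis, $(1\ 2)$ restricts to a bijection of $\underline c_1$, and hence for each $i\in\{1,2,3\}$ the existence of some $x \in \underline c_1$ with $\trip_i(x)\in \underline c_1$ is equivalent to the existence of some $x\in \underline c_1$ with $\trip'_i(x)\in \underline c_1$ (a short substitution $y = (1\ 2)(x)$ verifies this in all three formulas of each case).

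For the oscillating case, I would use \Cref{lem:special} to replace $W$ by a move-equivalent special representative $\wt W \in \mathcal S(\underline o)$, so that $\swap(W) = [\wt\swap(\wt W)]$. Since the trip data is a move-equivalence invariant, the observation above combined with \Cref{lem:swap_trip_osc} immediately yields the lemma in this case.

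For the non-oscillating case I would use the construction of \Cref{sec:non-osc-swap}. In cases (1), (2a), and (2b), $\swap$ on $W$ is realized as the composition $\swap_{2,3}\circ \swap_{1,2}$ applied to the oscillization $\osc(W)$, followed by de-oscillization. The two oscillating swaps involved only permute indices among $\{1,2,3\}$, all of which lie in the oscillized first clasp (oscillization splits a boundary vertex into vertices remaining in the same clasp), so the oscillating argument applies to each factor. A trip of $W$ with both endpoints in $\underline c_1$ corresponds to a trip of $\osc(W)$ with both endpoints in the oscillized clasp, and vice versa, with the only minor ambiguity being the two $\trip_2$-strands through a type-$2$ boundary vertex, whose endpoints both lie in the same oscillized clasp. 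Case (0) of the non-oscillating setup, where $\ell_1 = \ell_3$ and $\swap$ simply reverses this single $\trip_2$-strand, requires a direct verification: the reversal leaves all $\trip_2$-strand endpoints unchanged (these strands are direction-agnostic) and merely interchanges the $\trip_1$- and $\trip_3$-strands through the two affected boundary vertices, again preserving endpoint sets.

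The step I expect to require the most care is the bookkeeping of the oscillization-de-oscillization correspondence: one must verify that the oscillized versions of $b_1, b_2$ (and any vertices split from them) remain inside the same oscillized clasp $\underline c_1$, so that the successive transpositions in the non-oscillating swap genuinely act within this clasp, and that the oscillized notion of a ``returning trip in the first clasp'' agrees with the de-oscillized notion. Once this bookkeeping is in place, the lemma follows from \Cref{lem:swap_trip_osc} and the move-invariance of $\trip_\bullet$.
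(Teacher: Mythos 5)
Your proposal is correct and follows essentially the same route as the paper's proof: reduce to special (or partially oscillized) representatives using move-invariance of trip endpoints, apply \Cref{lem:swap_trip_osc} together with the observation that the transposition $(1\ 2)$ preserves the clasp, and handle hourglass boundary vertices via the oscillize--swap twice--de-oscillize factorization. Your explicit treatment of case (0) of the non-oscillating construction is a small addition the paper leaves implicit, but the argument is the same.
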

    
\begin{proof}
    Since endpoints of trips only depend on move-equivalence classes of webs, we may safely assume that both $W$ and $W'$ are special representatives.

    If $\ell$ is a $\trip$-strand in $W$ with both endpoints in $\underline c_1$ different from $b_1$ and $b_2$, then $\ell$ also exists in $W'$ because the local transformation does not affect it. Hence, it suffices to consider when $\ell$ has one endpoint $b_1$ or $b_2$.

    In the case that both $b_1,b_2$ are incident to simple edges, \Cref{lem:swap_trip_osc} guarantees that $\ell$ transforms into a trip of $W'$ with endpoints in $\underline c_1'$ (note that the transposition $(1\ 2)$ preserves the vertices in the clasp $\underline c_1'$). In case one of $b_1, b_2$ is incident to an hourglass edge, swapping is given by partially oscillizing (after a potential flip), applying $\swap_{i, i+1}$ twice, and then de-oscillizing. The oscillizing and de-oscillizing steps ensure that the endpoints of the trip remain within the clasp, and so do the two $\swap_{i, i+1}$ applications by \Cref{lem:swap_trip_osc}.
\end{proof}

Next we analyze the effect of $\wt\swap$ on non-convexity in the oscillating case.

\begin{lemma}
    \label{lem:swap_nc_osc}
    Let $W\in \mathcal S(\underline o)$ with $o_1\ne o_2$, and $\underline C$ be a clasp sequence such that $b_1,b_2$ are in the same clasp $\underline c_1$. Then for any minimal cut path $\gamma$ of $\underline c_1$ in $W$, there is a cut path $\gamma'$ of $W'=\wt\swap(W)$ with the same weight. In particular, $W$ is non-convex if and only if $W'$ is.
\end{lemma}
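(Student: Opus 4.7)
The plan is a direct case-by-case analysis over the four types $\mathcal{S}_0, \mathcal{S}_1, \mathcal{S}_2, \mathcal{S}_3$ of special configurations. The key observation is that $\wt\swap$ modifies $W$ only inside a small local region $R$ containing $b_1$ and $b_2$ and a constant number of adjacent vertices and edges; outside $R$, the graphs $W$ and $W' = \wt\swap(W)$ are identical.

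For any minimal $\underline{c}_1$-cut path $\gamma$ in $W$, I would isotope $\gamma$ so that its interaction with $R$ is combinatorially canonical. Since $b_1, b_2 \in \underline{c}_1$ lie on the same (clasp) side of $\gamma$, a topological Jordan-curve argument shows that $\gamma$ must cross any simple arc in $R$ joining $b_1$ to $b_2$ an even number of times. This limits $\gamma$'s interaction with $R$ to finitely many combinatorial patterns. For each such pattern, I construct a corresponding cut path $\gamma'$ in $W'$ that agrees with $\gamma$ outside $R$ and is rerouted through the modified interior of $R$.

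The weight comparison $\weight(\gamma') = \weight(\gamma)$ is performed pattern by pattern and case by case. In $\mathcal{S}_0$, $R$ contains only the single edge between $b_1$ and $b_2$, and no cut path crosses this edge; hence $\gamma' = \gamma$ works. In $\mathcal{S}_1$ and symmetrically in $\mathcal{S}_2$, a cut path crossing both edges incident to the common neighbor $q$ on the clasp side contributes $\omega_1 + \omega_3$, and a direct inspection of the modified region confirms the matching contribution in $W'$ (where the crossing is undone or introduced). The main content is Case $\mathcal{S}_3$: since the three edges of the path $b_1 - q_1 - q_2 - b_2$ have their orientations simultaneously reversed by $\wt\swap$, any valid cut path crosses an even subset ($0$ or $2$) of these edges. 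For each choice of two edges crossed, the orientation reversal swaps the black/white identity of each endpoint on the clasp side, but the total contribution from the crossed pair is always $\omega_1 + \omega_3$ in both $W$ and $W'$. The main obstacle is not any single case but the bookkeeping needed to enumerate these patterns uniformly.

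The ``in particular'' statement follows from the involutive nature of $\wt\swap$ (\Cref{prop:swap}): applying the first part in both directions yields equality of minimum cut-path weights in $W$ and $W'$, so the non-convexity condition $\weight(\gamma) \ge \weight(\underline{c}_i)$ holds for $W$ if and only if it holds for $W'$. Cut paths for clasps other than $\underline{c}_1$ can be isotoped to avoid $R$ and are thus literally unchanged, contributing no new constraints.
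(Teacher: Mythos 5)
Your proposal is correct and follows essentially the same route as the paper: a case analysis over $\mathcal S_0,\dots,\mathcal S_3$ in which minimal cut paths are matched across the small region modified by $\wt\swap$ (the paper likewise disposes of $\mathcal S_0$ trivially, notes that minimal cut paths avoid the hourglass edge in the $\mathcal S_1\sqcup\mathcal S_2$ picture, and uncontracts the $4$-valent vertices in the $\mathcal S_3$ picture). One caveat in your $\mathcal S_3$ case: the crossed pair contributes $\omega_1+\omega_3$ only when the two crossed edges are $b_1q_1$ and $q_2b_2$; the patterns $\{b_1q_1,\,q_1q_2\}$ and $\{q_1q_2,\,q_2b_2\}$ would contribute $2\omega_1$ or $2\omega_3$, but each of these crosses two edges at a vertex of simple degree $3$ and is therefore excluded by minimality, which you should invoke explicitly rather than asserting the $\omega_1+\omega_3$ contribution for every choice of pair.
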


\begin{proof}
    The case when $W\in S_0(\underline o)$ is trivial. Next we analyze the case when $W\in S_1(\underline o)\sqcup S_2(\underline o)$. Then $\wt\swap$ is given by the local transformation in \Cref{fig:swap_local_12}.

    \begin{figure}[h]
        \centering
        \includegraphics[height=1.5cm]{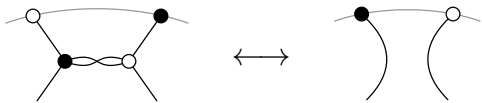}
        \caption{Local transformation for the effect of $\swap$ on $W\in S_1(\underline o)\sqcup S_2(\underline o)$.}
        \label{fig:swap_local_12}
    \end{figure}

    Note that a minimal cut path for $\underline c_1$ cannot intersect the hourglass edge shown in the figure on the left, and hence minimal cut paths of the left figure correspond easily to minimal cut paths of the right figure.

    In the case when $W\in S_3(\underline o)$, $\wt\swap$ is given by the local modification (after uncontracting the 4-valent vertices) in \Cref{fig:swap_local_3}.

    \begin{figure}[h]
        \centering
        \includegraphics[height=3cm]{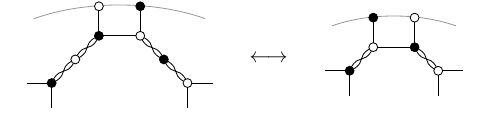}
        \caption{Local transformation for the effect of $\swap$ on $W\in S_3(\underline o)$.}
        \label{fig:swap_local_3}
    \end{figure}
    It is easy to see here that minimal cut paths in the left figure correspond to minimal cut paths in the right figure.
\end{proof}

Using the above, we can show that general swaps (involving type $2$ boundary conditions) also preserve non-convexity

\begin{lemma}
    \label{lem:swap_nc}
    Let $W\in \CRG(\underline a)$ be a clasped web with $b_1, b_2$ in $\underline c_1$, and let $W'$ be a representative of $\swap(W)$ in $\CRG(\underline a')$ with $b_1,b_2$ in the induced clasp $c_1'$. Then $W$ is non-convex if and only if $W'$ is non-convex.
\end{lemma}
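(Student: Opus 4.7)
The plan is to mirror the case-by-case verification in \Cref{lem:swap_nc_osc}, now applied to the four local boundary transformations shown in \Cref{fig:swap_hg}. By \Cref{lem:nonconvex}, non-convexity depends only on the move-equivalence class of $W$, so I may choose special representatives on both sides so that $W \leadsto W'$ is realized by one of the four local transformations (0), (1), (2a), (2b). By the color-reversal symmetry of the SL$_4$ combinatorics (which exchanges $\omega_1 \leftrightarrow \omega_3$ and interchanges type-$1$ and type-$3$ boundary vertices) combined with the flip-invariance of \Cref{rem:flip}, I may further restrict attention to the case $(a_1, a_2) = (3, 2)$; the case $(2, 3)$ follows by running the construction in reverse, and the cases involving a type-$1$ vertex follow by color reversal.

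In each of the four cases of \Cref{fig:swap_hg}, the transformation is a local modification near $b_1, b_2$, so any minimal $\underline c_1$-cut path whose image avoids the depicted region is unchanged. For paths entering the region, the remark following \Cref{def:nonconvex}---that a minimal cut path can intersect at most one edge incident to a simple-degree-$3$ internal vertex---restricts the possible traversals to a short finite list in each case. I would then match each such traversal on the left with a traversal on the right having the same weight contribution, noting that the clasp weight is unchanged under the transformation (the boundary conditions $\underline c_1$ and $\underline c_1'$ differ only by the transposition of $a_1$ and $a_2$, so $\weight(\underline c_1) = \weight(\underline c_1')$).

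The principal obstacle is cases (2a) and (2b), where the local picture contains a genuine internal vertex plus boundary hourglass edges, producing more potential traversal patterns than in the oscillating setting. The key simplification is that in each of these cases the transformation only modifies how the two $\trip_2$-strands $\ell_1, \ell_2$ attach to the boundary and not the internal topology; consequently a minimal cut path traversing the local region on one side can be rerouted to an analogous path on the other side by a predictable modification whose effect on the weight is computable from the local change alone. Cases (0) and (1) are directly parallel to the corresponding portions of \Cref{lem:swap_nc_osc}---indeed (0) is immediate since the transformation only reverses edge orientations without altering the underlying graph. After completing this verification, the inequality $\weight(\gamma) \geq \weight(\underline c_1)$ on the left transports to $\weight(\gamma') \geq \weight(\underline c_1')$ on the right for the paired cut paths, giving the claimed equivalence of non-convexity.
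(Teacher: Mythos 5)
Your approach is genuinely different from the paper's. The paper does not analyze the four local pictures of \Cref{fig:swap_hg} at all: it observes that when (say) $b_1$ carries an hourglass, the general swap factors as $\mathrm{de}\text{-}\mathrm{osc}\circ\swap_{1,2}\circ\swap_{2,3}\circ\osc_1$, notes that oscillization and de-oscillization put minimal cut paths in weight-preserving correspondence, and then simply applies \Cref{lem:swap_nc_osc} twice. This reduction reuses the already-verified oscillating case and requires no new local case analysis. Your route---choosing special representatives via \Cref{lem:nonconvex} and matching minimal cut paths across the transformations (0), (1), (2a), (2b) of \Cref{fig:swap_hg} directly---is explicitly acknowledged by the authors in the remark following the lemma as a viable alternative, so the strategy is sound; what it buys is a self-contained argument at the level of the non-oscillating pictures, at the cost of a longer case check.

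That said, your proposal stops short of the proof at exactly the point where the work lives. In cases (2a) and (2b) the local pictures do change the internal structure near the boundary (a crossing vertex is created or destroyed, and hourglass edges appear or disappear), so the claim that the transformation ``does not alter the internal topology'' is not quite accurate, and the assertion that each traversal on the left can be rerouted to one on the right ``by a predictable modification whose effect on the weight is computable'' is precisely the statement that needs to be verified case by case---including checking that minimality is preserved in both directions, since the equivalence of non-convexity requires matching \emph{minimal} cut weights, not just exhibiting some weight-preserving map on cut paths. As written, the hardest step is a plan rather than an argument. If you want to avoid that case check entirely, the paper's factorization through $\osc_1$ is the cleaner path.
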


\begin{proof}
    If neither $b_1$ nor $b_2$ is incident to an hourglass, then we are done by \Cref{lem:swap_nc_osc}. Suppose without loss of generality that $b_1$ is incident to an hourglass. Then after picking suitable move-equivalent representatives, $W'$ is obtained from $W$ by first partially oscillizing to get $\osc_1(W)$, performing $\swap_{2,3}$ then $\swap_{1,2}$, and finally de-oscillizing.

    Note that the oscillizing and de-oscillizing steps do not affect weights of minimal cut paths. More precisely, every minimal cut path of $W$ corresponds to a minimal cut path of $\osc_1(W)$ of the same weight. Applying \Cref{lem:swap_nc_osc} twice then shows that the minimal cut weights of $W$ are the same as the minimal cut weights of $W'$, and hence $W$ is non-convex if and only if $W'$ is. 
\end{proof}

\begin{remark}
    We can also prove that $\swap$ preserves returning trips and non-convexity for hourglass boundary conditions using the explicit description of $\swap$ via local boundary transformations as in \Cref{fig:swap_hg}.

\end{remark}

In summary, we have proven that the map $\swap$ gives a bijection that preserves returning non-convexity and returning trips. Iterated application of $\swap$ lets us sort the boundary conditions of any given web.

\begin{corollary}\label{cor:sorting}
    Let $\underline C = (\underline c_1, \dots, \underline c_m), \underline C' = (\underline c_1', \dots, \underline c_m')$ be clasp sequences on $\underline a, \underline a'$ respectively, such that each $\underline c_i'$ is a reordering of the tuple $\underline c_i$. There is a bijection
    \[\Psi: \CRG(\underline a)\to \CRG(\underline a')\]
    such that 
    \begin{itemize}
        \item $W\in \CRG(\underline a)$ is non-convex with respect to $\underline C$ if and only if $\Psi(W)$ is non-convex with respect to $\underline C'$.
        \item $W\in \CRG(\underline a)$ has no trips starting and ending in the samef clasp of $\underline C$ if and only if $\Psi(W)$ has the same property with respect to $\underline C'$.
    \end{itemize}
\end{corollary}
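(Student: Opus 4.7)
The plan is to realize $\Psi$ as a composition of adjacent swaps $\swap_{i,i+1}$, each exchanging two boundary vertices lying in a common clasp, and then invoke the preservation results of this subsection. Since each $\underline c_i'$ is a reordering of $\underline c_i$ as a tuple, there exists a permutation $\sigma\in S_n$ that fixes the set partition $[n]=I_1\sqcup\cdots\sqcup I_m$ setwise (restricting to a permutation of each $I_k$) and satisfies $a'_i=a_{\sigma^{-1}(i)}$. I would then bubble-sort $\sigma$ inside each clasp independently: at any intermediate stage where the current tuple still disagrees with $\underline a'$ on some $I_k$, there exist adjacent positions $i,i+1\in I_k$ whose current entries differ and whose exchange reduces the inversion count of the restriction to $I_k$. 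Continuing, this produces a finite sequence of adjacent transpositions, each internal to some $I_k$ and each swapping distinct values, whose product realizes $\sigma$.

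Next, I would define $\Psi$ as the composition of the corresponding $\swap_{i,i+1}$ maps, using \Cref{prop:swap} together with the extension of \Cref{sec:non-osc-swap} to handle type-$2$ boundary conditions. Each factor is a bijection on move-equivalence classes, so the composition is a bijection $\CRG(\underline a)/{\sim}\to \CRG(\underline a')/{\sim}$. To verify the two preservation properties, the key observation is that at each intermediate step the positions being exchanged lie in the same clasp of the current clasp sequence (which itself varies by an adjacent swap inside a single $I_k$ at each step). Non-convexity is preserved at each step by \Cref{lem:swap_nc}, and the property of having no $\trip$-strand with both endpoints in the same clasp is preserved by the unnamed lemma stated immediately before \Cref{lem:swap_nc_osc}. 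Iterating over the sequence of swaps gives both properties for $\Psi$.

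Since non-convexity and the returning-trip condition are invariants of the move-equivalence class (by \Cref{lem:nonconvex} and the corresponding move-invariance of $\trip_\bullet$), lifting the bijection of equivalence classes to an honest map $\CRG(\underline a)\to \CRG(\underline a')$ requires only a choice of representative from each class, and such a choice does not disturb either property. The main technical point requiring care, and what I expect to be the principal — albeit rather mild — obstacle, is ensuring that in decomposing $\sigma$ one never has to invoke $\swap_{i,i+1}$ on two equal-valued adjacent entries (for which the map is undefined); this is guaranteed by the bubble-sort procedure, which only swaps adjacent positions whose current entries actually differ, and which stays within a single clasp throughout so that every preservation lemma applies verbatim at each step.
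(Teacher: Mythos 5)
Your proposal is correct and matches the paper's (implicit) argument exactly: the paper derives this corollary simply by iterating the $\swap$ map, and your bubble-sort decomposition into adjacent swaps of unequal entries within a single clasp, combined with \Cref{prop:swap}, \Cref{lem:swap_nc}, and the trip-preservation lemma, is precisely what is intended. Your attention to the two minor points --- never swapping equal adjacent entries, and passing between move-equivalence classes and chosen representatives --- is appropriate and does not introduce any gap.
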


We are finally able to prove our main theorem for general clasp sequences.

\begin{thm}
    Let $\underline C = (\underline c_1, \dots, \underline c_m)$ be a clasp sequence on $\underline a\in [3]^n$, with $\weight(\underline c_i) = \lambda_i$. Then a basis for $\inv_G\left(\bigotimes_{i=1}^m V(\lambda_i)\right)$ is given by
    \[\mathcal W_{\underline C}\coloneqq \{\pi_{\underline C}([W]): [W]\in \mathcal W_{\underline a}\setminus \ker \pi_{\underline C}\}.\]
    Moreover, the following are equivalent:
    \begin{enumerate}
        \item $[W]\notin \ker\pi_{\underline C}$,
        \item $W$ is non-convex,
        \item $W$ has no trips that start and end in the same clasp.
    \end{enumerate}
    \label{thm:main_general}
\end{thm}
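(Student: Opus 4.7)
The plan is to reduce the general case to \Cref{thm:sorted} via the sorting bijection $\Psi$ of \Cref{cor:sorting}. Given a clasp sequence $\underline C = (\underline c_1,\dots,\underline c_m)$ on $\underline a$, let $\underline C' = (\underline c_1',\dots,\underline c_m')$ be obtained by sorting each clasp $\underline c_i$ individually; since each $\underline c_i'$ is a reordering of $\underline c_i$, the weights $\weight(\underline c_i') = \lambda_i$ are unchanged, so $\inv_G\bigl(\bigotimes_i V(\lambda_i)\bigr)$ is the same target space of the same dimension. \Cref{cor:sorting} then provides a bijection $\Psi : \CRG(\underline a) \to \CRG(\underline a')$ preserving both non-convexity and the no-returning-trips property.

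The implication $(1)\Rightarrow(2)$ is exactly \Cref{prop:nonconvex}, which does not require sortedness. For $(2)\Rightarrow(3)$: if $W$ is non-convex with respect to $\underline C$, then $\Psi(W)$ is non-convex with respect to the sorted sequence $\underline C'$, so by \Cref{thm:sorted} (the implication (2)$\Rightarrow$(5) there) the web $\Psi(W)$ has no returning trips in $\underline C'$; transferring back through $\Psi$ shows $W$ has none in $\underline C$. The remaining $(3)\Rightarrow(1)$ and the basis claim follow together from a dimension count. On one hand, $\mathcal W_{\underline C}$ spans $\inv_G\bigl(\bigotimes_i V(\lambda_i)\bigr)$, since $\pi_{\underline C}$ is surjective and $\mathcal W_{\underline a}$ is a basis of its source; hence $|\mathcal W_{\underline C}| \ge \dim\inv_G\bigl(\bigotimes_i V(\lambda_i)\bigr)$. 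On the other hand, using $(1)\Rightarrow(3)$ established above,
\[
|\mathcal W_{\underline C}| = \big|\{W \in \mathcal W_{\underline a} : [W]\notin\ker\pi_{\underline C}\}\big| \le \big|\{W \in \mathcal W_{\underline a} : W \text{ satisfies (3) for } \underline C\}\big|.
\]
Pushing the right-hand count forward through $\Psi$ and applying \Cref{thm:sorted} to the sorted sequence $\underline C'$, it equals $\dim\inv_G\bigl(\bigotimes_i V(\lambda_i')\bigr) = \dim\inv_G\bigl(\bigotimes_i V(\lambda_i)\bigr)$. Equality therefore holds throughout, yielding both the basis statement and $(3)\Rightarrow(1)$.

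The main obstacle has already been overcome by the machinery built earlier: constructing $\swap$ and verifying its invariance properties for returning trips and non-convexity (\Cref{lem:swap_trip_osc,lem:swap_nc}), establishing the sorted case \Cref{thm:sorted}, and packaging these into \Cref{cor:sorting}. Given this setup, the present reduction amounts to the counting argument above. The one point that deserves care is that $\Psi$ permutes boundary vertices only within individual clasps, so the clasp structure is preserved — this is precisely what makes ``trips starting and ending in the same clasp'' a well-defined property to compare across $\underline C$ and $\underline C'$, and what makes the inequality $|\mathcal W_{\underline C}|\le|\{W:(3)\}|$ transfer correctly between the two sides.
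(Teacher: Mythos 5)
Your proposal is correct and follows essentially the same route as the paper: reduce to the sorted case via the bijection $\Psi$ of \Cref{cor:sorting}, invoke \Cref{prop:nonconvex} for $(1)\Rightarrow(2)$, and close the loop with a dimension count against \Cref{thm:sorted}. The only cosmetic difference is that you run the counting inequality through condition (3) (returning trips) where the paper runs it through condition (2) (non-convexity); since $\Psi$ preserves both properties and \Cref{thm:sorted} equates them in the sorted case, the two versions are interchangeable.
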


\begin{proof}
    It is clear that $\mathcal W_{\underline C}$ spans $\inv_G\left(\bigotimes_{i=1}^m V(\lambda_i)\right)$, so it suffices to show that $|\mathcal W_{\underline C}|\le \dim\inv_G\left(\bigotimes_{i=1}^m V(\lambda_i)\right)$. By \Cref{prop:nonconvex}, (1)$\implies$(2), so
    \[|\mathcal W_{\underline C}| \le |\{W\in \mathcal W_{\underline a}: W\text{ is non-convex}\}|.\]
    
    Let $\underline C' = (\underline c_1', \dots, \underline c_m')$ be the sorted clasp sequence on $\underline a'$ obtained by sorting each of the $\underline c_i$, and let $\Psi: \CRG(\underline a)\to \CRG(\underline a')$ be as in \Cref{cor:sorting}. Since $\Psi$ maps non-convex webs bijectively onto non-convex webs,
    \[|\{W\in \mathcal W_{\underline a}:W\text{ is non-convex}\}| = |\{W'\in \mathcal W_{\underline a'}:W'\text{ is non-convex}\}|.\]
    
    By \Cref{thm:sorted} for sorted clasps, the set of non-convex webs in $\mathcal W_{\underline a'}$ is precisely $\mathcal W_{\underline C'}$, and
    \[|\mathcal W_{\underline C'}| = \dim\inv_G\left(\bigotimes_{i=1}^m V(\lambda_i)\right).\]
    Therefore $|\mathcal W_{\underline C}|\le \dim\inv_G\left(\bigotimes_{i=1}^m V(\lambda_i)\right)$, showing that $\mathcal W_{\underline C}$ forms a basis for $\inv_G\left(\bigotimes_{i=1}^m V(\lambda_i)\right)$. This also implies that $\mathcal W_{\underline C} = \{W\in \mathcal W_{\underline a}: W\text{ is non-convex}\}$, so that (2)$\implies$(1).

    It remains to check that (3) is equivalent to (2). By \Cref{cor:sorting}, $W$ is non-convex if and only if $\Psi(W)$ is non-convex, and $W$ has no trips returning to the same clasp if and only if $\Psi(W)$ has the same property. Moreover, \Cref{thm:sorted} implies that $\Psi(W)$ is non-convex if and only if $\Psi(W)$ has no trips returning to the same clasp. This shows the equivalence of items (2) and (3) in the theorem.
\end{proof}

Returning to the quantum case, the character of the finite-dimensional simple $\uq$ module $V_q(\lambda_i)$ (of \emph{type} $\mathbf{1}$, in the sense of e.g. \cite[Ch.~5]{jantzen-book}) is the same as that of the classical module $V(\lambda_i)$ that it deforms (working over $\C(q)$, so that $q$ is generic). Therefore, \Cref{prop:nonconvex} is valid even in the quantum case, by the same proof. This shows that $\W_{\underline C}$ spans $\inv_{U_q(\mathfrak{sl}_4)}\big(\bigotimes_{i=1}^m V_q(\lambda_i)\big)$, where $\lambda_i = \weight(\underline c_i)$. Moreover, the decomposition of tensor product $\bigotimes_{i=1}^m V_q(\lambda_i)$ into irreducibles is the same as in the classical case. Therefore, we still have that
\[\dim \inv_{U_q(\mathfrak{sl}_4)}\left(\bigotimes_{i=1}^m V_q(\lambda_i)\right) = |\RT(\underline C)|.\] 
The proof of \Cref{thm:sorted} now goes through without any modification. Finally, the sorting bijection and \Cref{cor:sorting} prove \Cref{thm:main_general} in the quantum case.

\section{The $r=2,3$ cases}
\label{sec:r-2-3}

In this section, we recall Kuperberg's \cite{Kuperberg} clasped $\SL_2$ and $\SL_3$ web bases and prove their equivalence with our characterizations in terms of trips, boundary configurations, and descents. Some proofs are merely sketched, as they follow the same plan as the (much harder) proof for $r=4$ above.

\subsection{The $r=2$ case}

Kuperberg defined in \cite{Kuperberg} $\SL_2$ webs to be non-crossing matchings drawn within a disk. We fit this into the plabic framework by coloring every boundary vertex black, and introducing an internal white vertex along each strand of the matching. This gives a plabic graph such that all
\begin{itemize}
    \item boundary vertices are black, and
    \item interior vertices are white and of degree $2$.
\end{itemize}

Kuperberg showed that clasped webs given by non-crossing matchings with no U-turns, as in \Cref{fig:u_turn}, form a basis for the corresponding invariant space.

Similar to the $r=4$ case, the separation labeling of a web $W$ gives a lattice word $\mathcal L(W)$ with letters in $\{1, 2\}$. A descent of a lattice word is defined to be a subword of the form $12$.

If we declare U-turns to be the only bad local configurations, we obtain \Cref{thm:intro-sorted} for $r=2$. Note that since all boundary vertices are declared to be black, all clasps are already sorted.

\begin{figure}[h]
    \centering
    \includegraphics[width=0.2\linewidth]{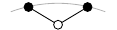}
    \caption{U-turn in an $\SL_2$ web.}
    \label{fig:u_turn}
\end{figure}

\subsection{The $r=3$ case}

Similar to the $r=2$ case, much of the work was already done by Kuperberg \cite{Kuperberg}, who, however, did not consider trips.

Thinking of $\SL_3$ webs as plabic graphs, we can consider their trips. Note that here we have only $\trip_1$ and $\trip_2$, which are inverses. Using trips, we can construct the separation labeling to obtain a lattice word $\mathcal L(W) \coloneqq \partial\sep_W$ with letters in $[3]\sqcup\overline{[3]}$. A \emph{sorted} word would contain all unbarred letters before all barred letters. We define descents to be subwords of the form
\begin{itemize}
    \item $ab$ where $a<b$ for $a,b\in[3]$, or
    \item $\overline {ab}$ where $a>b$ for $a,b\in[3]$, or
    \item $1\overline 1$.
\end{itemize}
This allows us to define for every clasp sequence $\underline C$ the set $\BL(\underline C)$ of balanced lattice words with no $\underline C$-descents.

The inverse of the map $W\mapsto\L(W)$ is given by the Khovanov--Kuperberg growth rules from \cite{Khovanov-Kuperberg}. The growth algorithm here works similarly to \Cref{algo:growth} where we place all boundary vertices on a horizontal line with downward dangling strands labelled by letters of the word, and apply the growth rules from \Cref{fig:kk_growth} till no dangling strands remain.

\begin{figure}[h]
    \centering
    \includegraphics[width=0.7\linewidth]{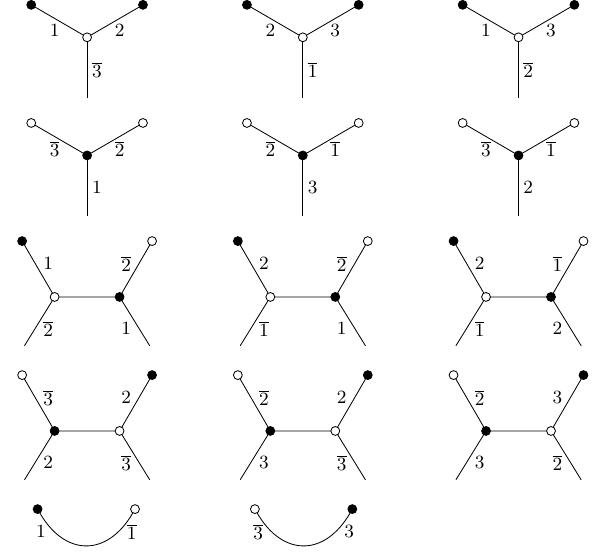}
    \caption{Khovanov--Kuperberg growth rules.}
    \label{fig:kk_growth}
\end{figure}

\begin{remark}
    In \cite{Khovanov-Kuperberg}, lattice words consist of the \emph{sign and state} letters from the set $\{+,-\}\times \{-1, 0, 1\}$. The translation to lattice words in our setting is given by 
    \begin{align*}
        (+, 1) \longleftrightarrow 1, \quad\quad & (-, 1) \longleftrightarrow \overline 3,\\
        (+, 0) \longleftrightarrow 2, \quad\quad & (-, 0) \longleftrightarrow \overline 2,\\
        (+, -1) \longleftrightarrow 3, \quad\quad & (-, -1) \longleftrightarrow \overline 1.\\
    \end{align*}
\end{remark}

It is shown in \cite[Thm~3.2]{LACIM} that the inverse to the KK-growth algorithm produces the proper edge coloring that is lexicographically minimal. Since the separation labeling is the lexicographically minimal labeling, the inverse to the KK growth algorithm is the map $W\mapsto \partial\sep(W) = \L(W)$, as claimed. We will use the KK-growth rules to show that descents in $\L(W)$ correspond to one of the bad local boundary configurations in \Cref{fig:badconf_3}.

\begin{figure}[h]
    \centering
    \includegraphics[width=0.7\linewidth]{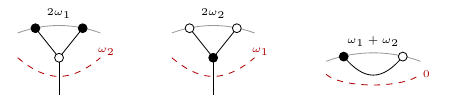}
    \caption{Bad local boundary configurations for $\SL_3$ webs, with partial convexity witnessed by the dashed arc.}
    \label{fig:badconf_3}
\end{figure}

With this setup, the main theorem for sorted clasps is true here as well, and we sketch how to adapt the proof of \Cref{thm:sorted}.

\begin{thm}\label{thm:sorted3}
    Fix boundary conditions $\underline a \in [2]^n$ and a {sorted} clasp sequence $\underline C = (\underline c_1,\dots, \underline c_m)$, and let $\lambda_i = \weight(\underline c_i)$. Then a basis for $\inv_G\left(\bigotimes_{i=1}^m V(\lambda_i)\right)$ is given by
    \[\mathcal W_{\underline C} \coloneqq \{\pi_{\underline C}([W]): [W]\in \mathcal W_{\underline a}\setminus \ker\pi_{\underline C}\}.\]
    Moreover, the following are equivalent for $W\in \mathcal W_{\underline a}$:
    \begin{enumerate}
        \item $[W] \notin \ker\pi_{\underline C}$.
        \item $W$ is non-convex.
        \item There are no local boundary configurations from \Cref{fig:badconf_3} occurring within any clasp of $W$.
        \item The lattice word $\L(W) = \partial\sep_W$ has no $\underline C$-descents.
        \item $W$ has no trips that start and end in the same clasp.
    \end{enumerate}

    \begin{proof}
    By \Cref{prop:nonconvex}, $\W_{\underline C}$ spans the invariant space, and we show that $|\W_{\underline C}|\le \dim\inv_G\left(\bigotimes_{i=1}^m V(\lambda_i)\right)$ while simultaneously proving the equivalence of (1), (2), (3) and (4).(1)$\implies$ (2) is \Cref{prop:nonconvex}.(2)$\implies$(3) is seen directly from \Cref{fig:badconf_3}. (3)$\implies$(4) is proven exactly like \Cref{prop:descent}, using the KK-growth rules instead. There are far fewer cases here, since each descent is of one of the following types:
    \begin{itemize}
        \item $ab$ with $a<b$,
        \item $\overline{ab}$ with $a>b$, or
        \item $1\overline 1$.
    \end{itemize}

    Now (1)$\implies$(4) implies that $\W_{\underline C}$ injects into $\BL(\underline C)$. Just as in \Cref{prop:LR_bijection}, we can construct a bijection between balanced lattice words with no $\underline C$-descents and $\weight(\underline C)$-Littlewood Richardson tableaux, showing that $|\BL(\underline C)| = \dim\inv_G\left(\bigotimes_{i=1}^m V(\lambda_i)\right)$. This proves the first statement, along with the equivalence of (1), (2), (3) and (4).

    Bad local boundary configurations all have returning trips, so (5)$\implies$(3). Lastly if $W$ is non-convex, we show using (4) and the definition of $\sep_W$ to show that there are no returning trips within the \textit{first} clasp, mimicking the proof of \Cref{thm:sorted}. By rotation invariance, this proves the equivalence of (5) with non-convexity.
        
    \end{proof}
\end{thm}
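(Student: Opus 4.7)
The plan is to follow the same five-step cycle as in the proof of \Cref{thm:sorted}, adapted to the much simpler combinatorics of $\SL_3$ webs. First, \Cref{prop:nonconvex} is stated for arbitrary $r\in\{2,3,4\}$, so (1)$\implies$(2) is immediate. For (2)$\implies$(3), I would inspect each of the configurations in \Cref{fig:badconf_3} and note that the dashed arc drawn there already exhibits a cut path $\gamma$ with $\weight(\gamma) \not\ge \weight(\underline c_i)$ in dominance order (using the $\SL_3$ weight formula $\weight(\gamma)=n_1\omega_1+n_2\omega_2$); since non-convexity is a property preserved under moves by the $\SL_3$ analogue of \Cref{lem:nonconvex} (which is even easier because there are no hourglass edges), the presence of a bad configuration inside a clasp forces partial convexity.

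The step (3)$\implies$(4) is the analogue of \Cref{prop:descent}, and it is the main obstacle, though significantly easier than for $r=4$. The plan is to reconstruct $W$ from $\L(W)$ using the Khovanov--Kuperberg growth rules of \Cref{fig:kk_growth}, and show that each of the three possible descent patterns $ab$ with $a<b$, $\overline{ab}$ with $a>b$, and $1\overline 1$ forces a bad local boundary configuration. The cases $12$ and $23$ (and barred analogues) are handled directly by the growth rule joining the two adjacent strands into a common neighbor; the case $13$ (and symmetrically $\overline{31}$) requires examining witnesses following the $3$-strand just as in the $\{1,2\}\{1,3\}$ and $\{2,4\}\{3,4\}$ cases of \Cref{prop:descent}, but with only two possible growth continuations rather than six; and $1\overline 1$ is immediate from the $1\overline 1 \to \emptyset$ rule, producing an arc directly between the two boundary vertices. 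Since webs with the same lattice word are move-equivalent, this will give the key inequality $|\W_{\underline C}|\le |\BL(\underline C)|$.

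Next, I would produce a bijection $\BL(\underline C)\simeq \RT(\weight(\underline C))$ exactly as in \Cref{prop:LR_bijection}, but using only letters in $[3]\sqcup\overline{[3]}$: for each subword $L_i$, letters $a\in[3]$ add a $1$-filled box in row $a$ and letters $\overline a$ add $1,2$-filled boxes in the two rows other than $a$ (reading left-to-right, right-to-left in the skew shape). Semistandardness of the fillings will correspond exactly to the no-descent condition via a two-case analog of \Cref{lem:descent2}. By the $\SL_3$ Littlewood--Richardson rule this gives $|\BL(\underline C)|=\dim\inv_G\!\left(\bigotimes_i V(\lambda_i)\right)$, and combining with the inequality above shows that $\W_{\underline C}$ is actually a basis, forcing equality throughout and thereby (4)$\implies$(1).

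Finally, for (5)$\iff$(2), I would first check by inspection of \Cref{fig:badconf_3} that each bad configuration contains a trip that both enters and exits the clasp, yielding (5)$\implies$(3)$\implies$(2). For the converse (2)$\implies$(5), I would mimic the last part of the proof of \Cref{thm:sorted}: by rotation invariance, assume a returning trip $\ell$ runs from $b_i$ to $b_j$ in the first clasp with $i<j$, and use the fact that sorted lattice words with no descents in the first clasp must have the form $L_1 = 1\cdots 1\,\overline 3\cdots \overline 3$ to show that $\sep_W(e_j)\in\{1,3\}$ forces either every trip through $e_j$ to separate $F(e_j)$ from $F_0$ or no trip to do so, contradicting the existence of $\ell$. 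This closes the cycle and completes the proof.
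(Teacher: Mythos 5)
Your proposal follows essentially the same route as the paper's proof: \Cref{prop:nonconvex} for spanning and (1)$\implies$(2), inspection of \Cref{fig:badconf_3} for (2)$\implies$(3), the Khovanov--Kuperberg growth rules in place of \Cref{prop:descent} for (3)$\implies$(4), the Littlewood--Richardson bijection of \Cref{prop:LR_bijection} for the dimension count, and the returning-trip/$\sep_W$ argument mimicking \Cref{thm:sorted} for the equivalence with (5), including the correct identification $L_1 = 1\cdots 1\,\overline 3\cdots\overline 3$. The details you supply (the three descent types, the witness analysis for $13$, the two-row LR filling for barred letters) are consistent with what the paper leaves implicit, so the proposal is correct and matches the paper's approach.
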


To generalize to non-sorted clasps, we can use Kuperberg's H-webs \cite{Kuperberg} to construct our $\swap$ map as follows:
\begin{itemize}
    \item If $b_1,b_2$ are connected via a length $3$ path, then an H-web is deleted.
    \item Else, an H-web is added.
\end{itemize}

\begin{figure}[h]
    \centering
    \includegraphics[height = 1.5cm]{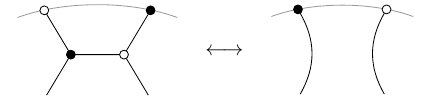}
    \label{fig:h}
    \caption{$\swap$ for $r=3$. Color reversals are also allowed.}
\end{figure}
It is easy to see, as in the $r=4$ case, that $\swap$ preserves non-convexity and returning trips. This gives us the theorem for general, non-sorted clasps.

\begin{thm}
    Let $\underline C = (\underline c_1, \dots, \underline c_m)$ be a clasp sequence on $\underline a\in [2]^n$, with $\weight(\underline c_i) = \lambda_i$. Then a basis for $\inv_G\left(\bigotimes_{i=1}^m V(\lambda_i)\right)$ is given by
    \[\mathcal W_{\underline C}\coloneqq \{\pi_{\underline C}([W]): [W]\in \mathcal W_{\underline a}\setminus \ker \pi_{\underline C}\}.\]
    Moreover, the following are equivalent:
    \begin{enumerate}
        \item $[W]\notin \ker\pi_{\underline C}$,
        \item $W$ is non-convex,
        \item $W$ has no trips that start and end in the same clasp.
    \end{enumerate}
\end{thm}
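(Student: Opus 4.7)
The plan is to mirror verbatim the proof of \Cref{thm:main_general}, substituting the $r=3$ ingredients: the sorted-case result \Cref{thm:sorted3}, the $r=3$ version of \Cref{prop:nonconvex}, and the $\swap$ map defined via H-webs. The strategy is: spanning is easy; for the upper bound on $|\W_{\underline C}|$, use $\swap$ to reduce to a sorted clasp sequence where the dimension count from \Cref{thm:sorted3} applies.

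First I would observe that \Cref{prop:nonconvex} is stated and proved for general $r \in \{2,3,4\}$ (the proof only uses the cut-path weight/dominance argument), so it gives immediately that $\W_{\underline C}$ spans $\inv_G(\bigotimes_{i=1}^m V(\lambda_i))$ and that (1)$\Rightarrow$(2). Next, let $\underline C' = (\underline c_1', \ldots, \underline c_m')$ be the sorted clasp sequence on some $\underline a'$ obtained by sorting each $\underline c_i$. Iterated application of the $\swap$ map (using the H-web local moves just introduced, together with rotation) gives a bijection $\Psi: \CRG(\underline a) \to \CRG(\underline a')$; the key properties of $\Psi$, as stated immediately before the theorem, are that it preserves non-convexity and the property of having no trips that start and end in the same clasp.

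Now the counting argument goes through identically to \Cref{thm:main_general}:
\[
|\W_{\underline C}| \leq |\{W \in \W_{\underline a} : W \text{ is non-convex}\}| = |\{W' \in \W_{\underline a'} : W' \text{ is non-convex}\}| = |\W_{\underline C'}| = \dim \inv_G\!\left(\bigotimes_{i=1}^m V(\lambda_i)\right),
\]
where the first inequality uses (1)$\Rightarrow$(2), the first equality uses $\Psi$, the second equality uses \Cref{thm:sorted3} applied to $\underline C'$, and the final equality is the basis statement of \Cref{thm:sorted3}. This forces $\W_{\underline C}$ to be a basis and forces equality throughout, so that conversely (2)$\Rightarrow$(1). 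For the equivalence with (3), combine the fact that $\Psi$ preserves both non-convexity and the no-returning-trips property with \Cref{thm:sorted3} applied to $\Psi(W)$, which asserts the equivalence of these two conditions in the sorted setting.

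The only step requiring verification beyond a direct translation is that the H-web $\swap$ map does preserve non-convexity and returning-trip status; the expected obstacle is merely a small case-check on the two local configurations of \Cref{fig:h}, since any minimal cut path for a given clasp cannot enter the interior of an H and therefore has matching weight before and after the move, and a $\trip$-strand with both endpoints in a clasp containing $b_1, b_2$ is transported to another such strand by following the local transformation. Once this is spelled out, the argument is complete, and the quantum case follows by the same remarks as at the end of \Cref{sec:sorting} (characters of $V_q(\lambda_i)$ agree with those of $V(\lambda_i)$ for generic $q$).
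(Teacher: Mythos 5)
Your proposal is correct and follows exactly the route the paper intends: the paper itself gives no separate proof of this theorem beyond noting that the H-web $\swap$ preserves non-convexity and returning trips, after which the argument of \Cref{thm:main_general} (spanning via \Cref{prop:nonconvex}, the counting chain through the sorted case \Cref{thm:sorted3}, and the transport of conditions (2) and (3) along $\Psi$) applies verbatim. Your added verification that minimal cut paths and returning trips are matched across the H-web local move is precisely the ``easy to see'' check the paper defers.
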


\section*{Acknowledgements}

We thank Ben Elias for his thought-provoking comments and Elise Catania, Jesse Kim, and Stephan Pfannerer for letting us know about their independent work in progress \cite{catania-kim-pfannerer}. The writing of this paper was completed during our visit to ICERM for the workshop ``Webs in Algebra, Geometry, Topology, and Combinatorics", and we are very grateful for the excellent collaborative conditions provided there.

\bibliographystyle{amsalphavar}
\bibliography{main}
\end{document}